\numberwithin{equation}{section} \theoremstyle{plain}
\newcommand{\tr}{\mbox{\textnormal{tr\,}}}
\newcommand{\vech}{\mbox{\textnormal{vech}}}
\newcommand{\E}{\mbox{\textnormal{E}}}
\newcommand{\PP}{\mathbb{P}}
\newcommand{\R}{\mathbb{R}}
\newcommand{\Q}{\mathbb{Q}}
\newcommand{\Z}{\mathbb{Z}}
\newcommand{\N}{\mathbb{N}}
\newcommand{\K}{\mathbb{K}}
\newcommand{\C}{\mathbb{C}}
\newcommand{\half}{\mbox{\normalsize${\frac{1}{2}}$}}
\newcommand{\quart}{\mbox{\normalsize${\frac{1}{4}}$}}
\newcommand{\dd}{\textnormal{d}}
\newcommand{\ii}{\textnormal{i}}
\newcommand{\wX}{\widetilde{X}}
\newcommand{\hK}{\widehat{K}}
\newcommand{\hb}{\widehat{b}}
\newcommand{\wb}{\widetilde{b}}
\newcommand{\wB}{\widetilde{B}}
\newcommand{\wc}{\widetilde{c}}
\newcommand{\wnu}{\widetilde{\nu}}
\newcommand{\wK}{\widetilde{K}}
\newtheorem{theorem}{Theorem}[section]
\newtheorem{lemma}[theorem]{Lemma}
\newtheorem{prop}[theorem]{Proposition}
\newtheorem{cor}[theorem]{Corollary}
\theoremstyle{definition}
\newtheorem{definition}[theorem]{Definition}
\newtheorem{example}[theorem]{Example}
\theoremstyle{remark}
\newtheorem{remark}[theorem]{Remark}
\numberwithin{equation}{section}
\begin{document}

\begin{frontmatter}
\title{The affine transform formula for affine jump-diffusions with a general closed convex state space}
\runtitle{The affine transform formula}

\begin{aug}
\author{\fnms{Peter} \snm{Spreij}\ead[label=e1]{spreij@uva.nl}}
and
\author{\fnms{Enno} \snm{Veerman}\ead[label=e2]{e.veerman@uva.nl}}

\runauthor{P. Spreij and E. Veerman}

\affiliation{University of Amsterdam}

\address{Korteweg-de Vries Institute for
Mathematics\\
Universiteit van Amsterdam \\
Science park 904\\
1098XH Amsterdam
\\The Netherlands\\
\printead{e1}  \\ \phantom{E-mail:\ }\printead*{e2}}

\end{aug}

\begin{abstract}
We establish existence of exponential moments and the validity of
the affine transform formula for affine jump-diffusions with a
general closed convex state space.  This extends known results for
affine jump-diffusions with a canonical state space. The key step
is to prove the martingale property of an exponential local
martingale, using the well-posedness of the associated martingale
problem. By analytic extension we obtain the affine transform
formula for complex exponentials, in particular for the
characteristic function. Our results apply to a wide class of
affine processes, including those with a matrix-valued state
space, which have recently gained interest in the literature.
\end{abstract}

\begin{keyword}[class=MSC]
\kwd[Primary ]{60J60} \kwd{91G30}
\end{keyword}

\begin{keyword}
\kwd{affine jump-diffusions} \kwd{generalized Riccati equations}
\kwd{exponential moments} \kwd{exponential martingales}
\kwd{affine transform formula}
\end{keyword}

\end{frontmatter}

\section{Introduction}

Affine jump-diffusions, as introduced in \cite{dk96,dps00}, are
widely used in finance, due to their flexibility and mathematical
tractability.
Their main attraction  lies in the so-called \emph{affine
transform formula}
\begin{align}\label{eq:affinetransform}
 \E_x \exp(u^\top
X_t)=\exp(\psi_0(t,u)+\psi(t,u)^\top x),\quad u\in\C^p,X_0=x,
\end{align}
which relates exponential moments of the affine jump-diffusion $X$
to solutions $(\psi_0,\psi)$ to certain ordinary differential
equations, called generalized Riccati equations. The importance of
this formula is particularly elucidated in option and bond
pricing. For example, the affine transform formula yields a closed
form expression for the zero-coupon bond price in an affine term
structure model, see \cite{dk96,dps00}. Moreover, taking $u$
purely imaginary in (\ref{eq:affinetransform}) gives the
characteristic function of $X_t$, which is of vital importance for
calculating more general prices by using Fourier methods, e.g.\
those of \cite{carr99}.

The validity of the affine transform formula is not
straightforward in general. In the literature most results in this
respect
are proved for affine jump-diffusions living on the state space
$\R^m_+\times\R^{p-m}$, see
\cite{dfs03,fm09,glassermankim,kallkarb08,lev04} amongst others.
This state space, often called the \emph{canonical state space},
was introduced in \cite{ds00} and has traditionally been the
standard choice in financial applications. Currently though, there
is a growing number of papers devoted to matrix-valued affine
processes living on $S^p_+$, the cone of positive semi-definite
matrices, or on variations of it, like $S^p_+\times\R$, see for
instance \cite{cfmt09,fons08,gjs09,grass08,karbpfafstelz10}.
Moreover, in an accompanying paper \cite{part2} we provide further
examples of affine diffusions with a ``non-canonical'' state
space, e.g.\ those with a quadratic state space, indicating that
this class is rather rich. This feeds the demand to obtain results
for the validity of (\ref{eq:affinetransform}) for more general
state spaces than $\R^m_+\times\R^{p-m}$, which is the scope of
the present paper.

We highlight that one of our aims is to establish for
\emph{arbitrary} state spaces the affine transform formula for the
characteristic function, a crucial feature for the application of
affine processes in mathematical finance as pointed out in the
first paragraph. To our knowledge, this important property has
only been derived for affine processes living on a canonical state
space, see \cite{dfs03,fm09}. The complicated factor is that the
so-called \emph{admissibility conditions} that are required for
stochastic invariance and for existence and uniqueness of the
affine process, are much more involved for a non-canonical than
for a canonical state space, due to the curvedness of the
boundary. As a consequence, it is much harder for general state
spaces to control the solutions of the Riccati equations by means
of these admissibility conditions. We circumvent this difficulty
by relying on probabilistic methods instead.


%

The contents and set-up of the paper are as follows. First we
derive a general result in Section~\ref{sec:prelim} on the
martingale property of a stochastic exponential, building on
results in \cite{cherfilyor}. Next we apply this in
Section~\ref{sec:affineSDEs} to the stochastic exponential of
affine jump-diffusions in order to obtain sufficient conditions on
$\psi$ such that (\ref{eq:affinetransform}) holds, irrespective of
the underlying state space. This is our first main result and
extends the result in \cite{kallkarb08}, which is limited to the
canonical state space.

Our second main result concerns the full range of validity of
(\ref{eq:affinetransform}) for affine jump-diffusions with an
arbitrary closed convex state space, under some moment conditions
on the jump-measure. We show existence of solutions to the Riccati
equations under finiteness of exponential moments and establish
the affine transform formula (\ref{eq:affinetransform}) whenever
either side of (\ref{eq:affinetransform}) is well-defined, both
for real and complex $u$. This generalizes a recent result by
\cite{fm09}, which concerns affine diffusions on the canonical
state space $\R^m_+\times\R^{p-m}$ under absence of jumps.

The proof of the second main result is distributed over two
sections. In Section~\ref{sec:realexp} we establish the full range
of validity for real-valued exponentials, while in
Section~\ref{sec:complexexp} we extend this to complex ones. For
the latter we use the analyticity of both the characteristic
function and the solutions to the Riccati equations. A
complicating matter is that an affine jump-diffusion with a
general state space is in general not \emph{infinite divisible},
as opposed to those with a canonical state space. Hence, a priori
it is not excluded that the left-hand side of
(\ref{eq:affinetransform}) vanishes for certain complex $u$, which
would yield an explosion of $\psi$. We tackle this problem by
using properties of analytic functions.

In Section~\ref{sec:boundedexp} we relax the moment conditions on
the jump-measure and establish the validity of (a slight variation
of) (\ref{eq:affinetransform}) in the case the left-hand side is
uniformly bounded in $x$ and $t$, which includes the
characteristic function. This yields our third main result and it
enables us to obtain sufficient conditions for infinite
divisibility in Subsection~\ref{subsec:infdiv} as well as proving
additional results for the case that the state space is a
self-dual cone in Subsection~\ref{subsec:selfdual}.

Finally, some technical results used throughout the text are put
in the appendix, in order to keep a fluid presentation.

\section{Preliminary result on exponential martingales}\label{sec:prelim}
 In this section we obtain sufficient conditions for the
 martingale property of a
stochastic exponential. This is the key-ingredient in obtaining
our results concerning the affine transform formula for affine
jump-diffusions in the next sections. We use the framework of
\cite{cherfilyor} with some slight modifications and derive a
corollary of its main result, \cite[Theorem~2.4]{cherfilyor}, in
Theorem~\ref{th:expmart}.

Let $E\subset\R^p$ be a closed set and $E_\Delta=E\cup\{\Delta\}$
the one-point compactification of $E$. Every measurable function
$f$ on $E$ is extended to $E_\Delta$ by setting $f(\Delta)=0$.
Throughout this section, $\Omega$ denotes a subset of
$D_{E_\Delta}[0,\infty)$, the space of c\`{a}dl\`{a}g functions
$\omega:[0,\infty)\rightarrow E_\Delta$. Unless mentioned
otherwise, $\Omega$ is equipped with the $\sigma$-algebra
$\mathcal{F}^X=\sigma(X_s:s\geq0)$ and filtration
$\mathcal{F}^X_t:=\sigma(X_s:0\leq s\leq t)$, generated by the
coordinate process $X$ given by $X_t(\omega)=\omega(t)$.

Let us be given measurable functions $b:E\rightarrow\R^p$,
$c:E\rightarrow S_+^p$ (space of positive semi-definite $(p\times
p)$-matrices) and a transition kernel $K$ from $E$ to
$F\subset\R^p\backslash\{0\}$ such that $E+F\subset E$. Assume
that
\begin{align}\label{eq:condbcK}
b(\cdot),c(\cdot) \mbox{ and }\int (|z|^2\wedge |z|) \dd
K(\cdot,\dd z)\mbox{ are bounded on compacta of $E$},
\end{align}
and
\begin{align}\label{eq:condgrowK}
\int_{\{|z|>1\}}|z|^q K(x,\dd z)\leq C(1+|x|^q),\mbox{ for some
$C,q>0$, all $x\in E$}.
\end{align}
Write $\nabla f$ for the gradient of $f$ (as a row vector) and
$\nabla^2 f$ for the Hessian. Then
\begin{equation}\label{eq:opA}
\begin{split}
\mathcal{A}f(x)&=\nabla f(x) b(x)+\half\tr(\nabla^2
f(x)c(x))\\&+\int (f(x+z)-f(x)-\nabla f(x) z)K(x,\dd z)
\end{split}
\end{equation}
defines a linear operator $\mathcal{A}:C^\infty_c(E)\rightarrow
B(E)$, see Lemma~\ref{lem:appendix} in the appendix. Here,
$C^\infty_c(E)$ denotes the space of $C^\infty$-functions on $E$
with compact support and $B(E)$ the space of bounded measurable
functions on $E$.
\begin{definition}
A probability measure $\PP$ on $(\Omega,\mathcal{F}^X)$ is called
a solution of the martingale problem for $\mathcal{A}$ if
\begin{align}\label{eq:Mft}
M^f_t=f(X_t)-f(X_0)-\int_0^t \mathcal{A}f(X_s)\dd s
\end{align}
is a $\PP$-martingale with respect to $(\mathcal{F}^X_t)$ for all
$f\in C^\infty_c(E)$. If in addition $\lambda$ is a probability
measure on $E$ such that $\PP\circ X_0^{-1}=\lambda$, then we say
$\PP$ is a solution of the martingale problem for
$(\mathcal{A},\lambda)$ and we often write $\PP=\PP_\lambda$. If
$\lambda=\delta_x$, the Dirac-measure at $x$ for some $x\in E$,
then we write $\PP_x$ instead. Likewise, $\E_\lambda$ denotes the
expectation with respect to $\PP_\lambda$ and $\E_x$ the
expectation with respect to $\PP_x$. We call the martingale
problem for $\mathcal{A}$ well-posed if for all $x\in E$ there
exists a unique solution $\PP_x$ on
$(D_E[0,\infty),\mathcal{F}^X)$ of the martingale problem for
$(\mathcal{A},\delta_x)$.
\end{definition}

\begin{remark}\label{remark} 1. In case $\Omega=D_E[0,\infty)$, then it holds that $\PP$ is a
solution of the martingale problem for $\mathcal{A}$ on
$(\Omega,\mathcal{F}^X)$ if and only if $X$ is a special
jump-diffusion on
$(\Omega,\mathcal{F}^X,(\mathcal{F}^X_{t+}),\PP)$ with
differential characteristics $(b(X),c(X),K(X,\dd z))$, by
\cite[Theorem~II.2.42]{JacShir} and a modification of
\cite[Proposition~3.2]{cherfilyor}. In that case, $X$ can be
decomposed according to its characteristics by
\begin{align}\label{eq:decompx}
X=X_0+B+X^c+z\ast(\mu^X-\nu^X),
\end{align}
where $B_t=\int_0^t b(X_s)\dd s$, $\mu^X$ is the random measure
associated to the jumps of $X$, $\nu^X(\dd t,\dd z)=K(X_t,\dd
z)\dd t$ its compensator and $X^c$ is the continuous local
martingale part of $X$ with quadratic variation $\langle
X^c\rangle_t=\int_0^t c(X_s)\dd s$.

2. If the martingale problem for $\mathcal{A}$ is well-posed, then
$(\PP_x)_{x\in E}$ is a transition kernel and for all probability
measures $\lambda$ on $E$ it holds that $\PP_\lambda=\int
\PP_x\lambda(\dd x)$ is the unique solution of the martingale
problem for $(\mathcal{A},\lambda)$. In addition, the strong
Markov property holds, i.e.\
\[
\E_\lambda
(f(X_{t+\tau})|\mathcal{F}_t^X)=\E_{X_{\tau}}f(X_{t}),\quad \mbox{
$\PP_\lambda$-a.s.}
\]
for all integrable $f$, $t\geq0$ and a.s.\ finite
$(\mathcal{F}^X_t)$-stopping times $\tau$. See the appendix for
the proof of this assertion.

3. If for some $x_0\in E$, $\PP$ is a solution of the martingale
problem for $(\mathcal{A},\delta_{x_0})$, then $
\mathcal{A}f(x_0)=\lim_{t\downarrow 0}(\E f(X_t)-f(x_0))/t,\mbox{
for $f\in C^\infty_c(E)$}$. This follows by taking expectations in
(\ref{eq:Mft}) and applying Fubini, which is justified since
$\mathcal{A}f$ is bounded.
\end{remark}

In addition to $b$, $c$ and $K$, let us be given a measurable
function $\wb:E\rightarrow\R^p$ and a transition kernel $\wK$ from
$E$ to $F$. Assume that
\begin{align}\label{eq:condcontinu}
\wb(\cdot),c(\cdot)\mbox{ are continuous, }(|z|^2\wedge
|z|)\wK(\cdot,\dd z)\mbox{ is weakly continuous},
\end{align}
and
\begin{align}\label{eq:condmoregrowK}
\int_{\{|z|>1\}} |z|^q\log|z| \wK(x,\dd z)\leq C(1+|x|^q),\mbox{
some $C,q>0$, all $x\in E$}.
\end{align}
Then
\begin{equation}\label{eq:opwA}
\begin{split}
\widetilde{\mathcal{A}}f(x)&=\nabla f(x)\wb(x)+\half\tr(\nabla^2
f(x)c(x))\\&+\int (f(x+z)-f(x)-\nabla f(x) z)\wK(x,\dd z)
\end{split}
\end{equation}
defines a linear operator
$\widetilde{\mathcal{A}}:C^\infty_c(E)\rightarrow C_0(E)$, where
$C_0(E)$ denotes the space of continuous functions on $E$
vanishing at infinity, see Lemma~\ref{lem:appendix}. Here, weak
continuity means that $x\mapsto \int f(z) (|z|^2\wedge
|z|)\wK(x,\dd z)$ is continuous for all $f\in C_b(F)$, the space
of bounded continuous functions on $F$. As in \cite{cherfilyor},
we assume there exist measurable mappings $h:E\rightarrow\R^p$,
$w:E\times F\rightarrow (-1,\infty)$ such that $\wb$ and $\wK$ are
related to $b$ and $K$ by
\begin{equation}\label{eq:wbwK}
\begin{split}
\wb(x)&=b(x)+c(x)h(x)+\int z w(x,z) K(x,\dd z)\\
\wK(x,\dd z)&= (w(x,z)+1) K(x,\dd z).
\end{split}
\end{equation}

Our aim is to show the martingale property of a stochastic
exponential with the aid of \cite[Theorem~2.4]{cherfilyor}, under
the assumption that the martingale problem for $\mathcal{A}$ is
well-posed. This requires the existence of a solution of the
martingale problem for $\widetilde{\mathcal{A}}$ on
$(D_E[0,\infty),\mathcal{F}^X)$, which is part of the assumptions
in \cite[Theorem~2.4]{cherfilyor}. In our case though, we are able
to \emph{derive} the existence by invoking
\cite[Theorem~4.5.4]{ethier86}, as the range of
$\widetilde{\mathcal{A}}$ is contained in $C_0(E)$, due to the
additional continuity conditions (\ref{eq:condcontinu}). Note that
these conditions are similar as those in
\cite[Theorem~2.2]{str75}, where existence is derived for the case
$E=\R^p$.

The next lemma will be used to obtain the \emph{maximum principle}
for $\widetilde{\mathcal{A}}$ in the ensuing proposition, where we
establish the existence of a solution of the martingale problem
for $\widetilde{\mathcal{A}}$.
\begin{lemma}\label{lem:posmaxprinc}
Let $x_0\in E$ and suppose the martingale problem for
$(\mathcal{A},\delta_{x_0})$ has a solution $\PP$ on
$(\Omega,\mathcal{F}^X)$ with $\Omega=D_E[0,\infty)$. Suppose
$f\in C^\infty_c(E)$ attains its maximum at $x_0$. Then it holds
that
\begin{enumerate}
\item $\nabla f(x_0) c(x_0)=0$,
\item $\int\nabla f(x_0) z K(x_0,\dd z)$ is
well-defined and finite,
\item $\nabla f(x_0) b(x_0) -\int \nabla f(x_0) z K(x_0,\dd
z)+\half\tr(\nabla^2 f(x_0)c(x_0))\leq 0$. \end{enumerate}
\end{lemma}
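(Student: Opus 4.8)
The plan is to apply the infinitesimal maximum principle coming from part~3 of Remark~\ref{remark}, but not to $f$ itself: instead I would test it against a one–parameter family of exponentials and let the parameter go to infinity. Writing $m=f(x_0)$, fix a cut-off $\chi\in C^\infty_c(\R^p)$ with $0\le\chi\le1$ and $\chi\equiv1$ on a neighbourhood of $x_0$, and for $\lambda>0$ put $h_\lambda(x)=\chi(x)\exp(\lambda(f(x)-m))$. Then $h_\lambda\in C^\infty_c(E)$, and since $f(y)\le m$ for all $y\in E$ we have $h_\lambda\le1=h_\lambda(x_0)$ on $E$, so $h_\lambda$ also attains its maximum at $x_0$. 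Because $\Omega=D_E[0,\infty)$ forces $X_t\in E$, hence $h_\lambda(X_t)\le h_\lambda(x_0)$, Remark~\ref{remark} gives $\mathcal{A}h_\lambda(x_0)=\lim_{t\downarrow0}t^{-1}(\E_{x_0}h_\lambda(X_t)-h_\lambda(x_0))\le0$. Near $x_0$ the cut-off is inactive, so $\nabla h_\lambda(x_0)=\lambda\nabla f(x_0)$ and $\nabla^2h_\lambda(x_0)=\lambda^2\nabla f(x_0)^\top\nabla f(x_0)+\lambda\nabla^2f(x_0)$; writing $P:=\nabla f(x_0)b(x_0)+\half\tr(\nabla^2f(x_0)c(x_0))$ and $\Delta(z):=f(x_0+z)-f(x_0)\le0$, this yields
\[
\mathcal{A}h_\lambda(x_0)=\lambda P+\tfrac12\lambda^2\,\nabla f(x_0)c(x_0)\nabla f(x_0)^\top+\int\bigl(h_\lambda(x_0+z)-1-\lambda\nabla f(x_0)z\bigr)K(x_0,\dd z)\le0 .
\]
On $\{|z|<\rho_0\}$ (for small $\rho_0$) one has $h_\lambda(x_0+z)=e^{\lambda\Delta(z)}$ exactly, while on $\{|z|\ge\rho_0\}$ the set has finite $K$-mass with $|z|$ $K$-integrable and $h_\lambda(x_0+z)\in[0,1]$, so that region is harmless in every limit below.

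For the first assertion I would divide by $\lambda^2$ and let $\lambda\to\infty$. On small jumps $e^{\lambda\Delta}-1-\lambda\nabla f z=(e^{\lambda\Delta}-1-\lambda\Delta)+\lambda\,(\text{bracket})$, and using $0\le e^u-1-u\le\tfrac12u^2$ for $u\le0$ together with $\int\Delta^2\,K<\infty$, dominated convergence shows that both $\lambda^{-2}(e^{\lambda\Delta}-1-\lambda\Delta)$ and $\lambda^{-1}(\text{bracket})$ have $K$-integrals tending to $0$; the $\{|z|\ge\rho_0\}$ part vanishes likewise. Hence $\lambda^{-2}\mathcal{A}h_\lambda(x_0)\to\half\,\nabla f(x_0)c(x_0)\nabla f(x_0)^\top\le0$, and since $c(x_0)\succeq0$ this forces $\nabla f(x_0)c(x_0)\nabla f(x_0)^\top=0$, i.e.\ $\nabla f(x_0)c(x_0)=0$.

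With the quadratic term now gone, the inequality reads $\lambda P+\int(h_\lambda(x_0+z)-1-\lambda\nabla f(x_0)z)K\le0$. Rearranging via $\lambda P+\lambda\int(\text{bracket})K=\lambda\,\mathcal{A}f(x_0)$ leaves $\tfrac1\lambda\int(e^{\lambda\Delta}-1-\lambda\Delta)K\le-\mathcal{A}f(x_0)$. A short monotonicity check shows $\lambda^{-1}(e^{\lambda\Delta}-1-\lambda\Delta)$ increases in $\lambda$ to $-\Delta=f(x_0)-f(x_0+z)\ge0$, so monotone convergence gives $\int(f(x_0)-f(x_0+z))K(x_0,\dd z)\le-\mathcal{A}f(x_0)<\infty$; combined with integrability of the bracket this makes $\int\nabla f(x_0)z\,K(x_0,\dd z)$ well-defined and finite, which is the second assertion. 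For the third, I divide the same inequality by $\lambda$: finiteness of $\int\nabla f z\,K$ and of $\int(f(x_0)-f(x_0+z))K$ legitimises the splitting $\int\lambda^{-1}(h_\lambda(x_0+z)-1)K-\int\nabla f z\,K$, and since $\lambda^{-1}(h_\lambda(x_0+z)-1)\to0$ with $|\lambda^{-1}(h_\lambda(x_0+z)-1)|\le f(x_0)-f(x_0+z)$, dominated convergence yields $P-\int\nabla f(x_0)z\,K(x_0,\dd z)\le0$, i.e.\ assertion~3.

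The point I expect to be the crux is that the bare maximum principle $\mathcal{A}f(x_0)\le0$ is strictly weaker than assertions (2) and (3): the linear jump term $\int\nabla f(x_0)z\,K$ need not even converge for a generic $f\in C^\infty_c(E)$, and neither (2) nor (3) follows from $\mathcal{A}f(x_0)\le0$ alone. The whole force of the argument comes from the sign $\Delta\le0$ (which is exactly the maximality of $x_0$ read off through the jumps) combined with the scaling in $\lambda$: this is what produces the monotone, respectively dominated, structure that simultaneously makes the diffusion tangential and controls the first jump moment. The only genuinely technical nuisance is the cut-off $\chi$, needed to keep $h_\lambda$ in $C^\infty_c(E)$ and bounded by $1$; one checks that its contribution lives on a set of finite $K$-mass on which $|z|$ is integrable, so it disappears in each of the three limits.
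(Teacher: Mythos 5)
Your proof is correct, but it takes a genuinely different route from the paper's. The paper obtains the family of inequalities by perturbing the \emph{generator}: it applies Girsanov's theorem with the degenerate kernels $h(x)=\lambda 1_{\{x=x_0\}}$ and $w(x,z)=(\varepsilon-1)1_{\{x=x_0\}\cap\{|z|>\varepsilon\}}$, verifies uniform integrability of the stochastic exponential via the Lepingle--M\'emin criterion, identifies the characteristics of $X$ under the new measure via \cite[Theorem~II.2.42]{JacShir}, and only then invokes the infinitesimal maximum principle (Remark~\ref{remark} part~3) for the transformed operator $\widehat{\mathcal{A}}$; assertion~1 follows from linearity in $\lambda\in\R^p$, and assertions~2 and~3 from monotone convergence as $\varepsilon\downarrow0$. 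You instead perturb the \emph{test function}, applying Remark~\ref{remark} part~3 directly to $h_\lambda=\chi\,e^{\lambda(f-m)}$ (the classical exponential-test-function device from proofs of Courr\`ege-type representation theorems) and extracting the three assertions from the $\lambda^{-2}$ and $\lambda^{-1}$ scalings. The two arguments feed on the same inputs --- the sign $f(x_0+z)-f(x_0)\le0$ transmitted through the jump term, and the bound $0\le e^u-1-u\le\tfrac12u^2$ for $u\le0$ (which is exactly what makes the paper's Lepingle--M\'emin verification work) --- but yours avoids the change-of-measure machinery entirely, at the price of the cut-off bookkeeping and the slightly weaker intermediate conclusion $\nabla f(x_0)c(x_0)\nabla f(x_0)^\top=0$, which you correctly upgrade to $\nabla f(x_0)c(x_0)=0$ using positive semi-definiteness. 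Your limit computations all check out: the domination of $\lambda^{-2}(e^{\lambda\Delta}-1-\lambda\Delta)$ by $\tfrac12\Delta^2\in L^1(K(x_0,\cdot))$, the monotone increase of $\lambda^{-1}(e^{\lambda\Delta}-1-\lambda\Delta)$ to $-\Delta$, and the domination $|\lambda^{-1}(e^{\lambda\Delta}-1)|\le|\Delta|$ are all valid, and the cut-off error is indeed $O(K(x_0,\{|z|\ge\rho_0\}))$ uniformly in $\lambda$, hence negligible after division by $\lambda$. Your closing observation is also on point: the lemma is strictly stronger than $\mathcal{A}f(x_0)\le0$, and the additional content of assertions~2 and~3 comes precisely from exploiting the sign of the jump increments, which both proofs do in essentially the same monotone-convergence step.
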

\begin{proof}
By Remark~\ref{remark} part 1, $X$ is a jump-diffusion on
$(\Omega,\mathcal{F}^X,\mathcal{F}^X_{t+},\PP)$ with differential
characteristics $(b(X),c(X),K(X,\dd z))$. Let $\lambda\in\R^p$ and
$\varepsilon>0$ be arbitrary, define $h(x)=\lambda 1_{\{x=x_0\}}$
and $w(x,z)=(\varepsilon-1) 1_{\{x=x_0\}\cap\{|z|>\varepsilon\}}$
and write $H_t=h(X_t)$, $W(t,z)=w(X_t,z)$ and
\[
Z=H\cdot X + W\ast (\mu^X-\nu^X).
\]
For $T>0$ it holds that $\mathcal{E}(Z)^T=\mathcal{E}(Z^T)$ is a
uniformly integrable martingale by \cite[Theorem~IV.3]{lepmem78},
since
\begin{align*}
&\half \langle Z^c\rangle_T + ((W+1)\log(W+1)-W)\ast
\nu^X_T\\&=\int_0^T (\half \lambda^\top c(x_0)
\lambda+\int_{\{|z|>\varepsilon\}}(\varepsilon\log\varepsilon-\varepsilon+1)K(x_0,\dd
z)))1_{\{X_s=x_0\}}\dd s
\end{align*}
has finite expectation as it is bounded. By Girsanov's Theorem
\cite[Proposition~4]{kall06}, $\Q=\mathcal{E}(Z)_T\cdot \PP$ is a
probability measure on $\mathcal{F}^X$ equivalent to $\PP$ and $X$
is a special jump-diffusion on $[0,T]$ with differential
characteristics $(\hb(X_t),c(X_t),\hK(X_t,\dd z))$ under $\Q$
given by
\begin{align*}
\hb(x)&=b(x)+c(x)h(x)+\int z w(x,z) K(x,\dd z)\\
\hK(x,\dd z)&= (w(x,z)+1) K(x,\dd z).
\end{align*}
Therefore, \cite[Theorem~II.2.42]{JacShir} yields that $\Q$ is a
solution of the martingale problem for
$(\widehat{\mathcal{A}},\delta_{x_0})$ on $(\Omega,\mathcal{F}^X)$
with time restricted to $[0,T]$, with the linear operator
$\widehat{\mathcal{A}}:C^\infty_c(E)\rightarrow B(E)$ defined by
\begin{align*}
\widehat{\mathcal{A}}f(x)&=\nabla f(x)\hb(x)+\half\tr(\nabla^2
f(x)c(x))\\&+\int(f(x+z)-f(x)-\nabla f(x) z)\hK(x,\dd z)\\
&=\mathcal{A}f(x)+\nabla f(x)
c(x)h(x)+\int(f(x+z)-f(x))w(x,z)K(x,\dd z).
\end{align*}
Hence $\widehat{\mathcal{A}}f(x_0)$ equals
\begin{align}\label{eq:afx0}
\mathcal{A}f(x_0)+\nabla f(x_0) c(x_0)
\lambda+\int_{\{|z|>\varepsilon\}}(f(x_0+z)-f(x_0))(\varepsilon-1)K(x_0,\dd
z).
\end{align}
Since $f$ attains its maximum at $x_0$, Remark~\ref{remark} part 3
yields that $\widehat{\mathcal{A}}f(x_0)\leq 0$. Therefore,
(\ref{eq:afx0}) is non-positive for all $\lambda\in\R^p$ and
$\varepsilon>0$. This yields that $\nabla f(x_0) c(x_0)=0$, which
is the first assertion. It follows that
\begin{align}\label{eq:afx1}
\mathcal{A}f(x_0)+\int_{\{|z|>\varepsilon\}}(f(x_0+z)-f(x_0))(\varepsilon-1)K(x_0,\dd
z)\leq 0,
\end{align}
for all $\varepsilon>0$. Letting $\varepsilon\downarrow0$ in
(\ref{eq:afx1}) and applying the Monotone Convergence Theorem
gives
\[
\mathcal{A}f(x_0) -\int(f(x_0+z)-f(x_0))K(x_0,\dd z)\leq 0.
\]
The left-hand side equals
\[
\nabla f(x_0) b(x_0) -\int \nabla f(x_0) z K(x_0,\dd
z)+\half\tr(\nabla^2 f(x_0)c(x_0)),
\]
which yields the second and third assertion.
\end{proof}

\begin{prop}\label{prop:1}
Suppose for all $x\in E$ there exists a solution of the martingale
problem for $(\mathcal{A},\delta_x)$ on
$(D_E[0,\infty),\mathcal{F}^X)$. Then for all $x\in E$ there
exists a solution of the martingale problem for
$(\widetilde{\mathcal{A}},\delta_x)$ on $\Omega$ given by
\begin{align}\label{eq:omega}
\Omega=\{\omega\in D_{E_\Delta}[0,\infty):\mbox{if }
\omega(t-)=\Delta\mbox{ or }\omega(t)=\Delta\mbox{ then
}\omega(s)=\Delta\mbox{ for }s\geq t\}.
\end{align}
\end{prop}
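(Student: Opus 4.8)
The plan is to invoke the existence theorem for martingale problems, \cite[Theorem~4.5.4]{ethier86}, which the authors have already signaled as the right tool. That theorem requires essentially two ingredients: that $\widetilde{\mathcal{A}}$ maps into $C_0(E)$ and satisfies the \emph{positive maximum principle}, and that $E_\Delta$ is compact (which holds by construction as the one-point compactification). The range condition $\widetilde{\mathcal{A}}\colon C^\infty_c(E)\to C_0(E)$ is already recorded in the text via Lemma~\ref{lem:appendix}, using the continuity hypotheses \eqref{eq:condcontinu} and the moment bound \eqref{eq:condmoregrowK}. So the crux of the proof is to verify the positive maximum principle for $\widetilde{\mathcal{A}}$, and then to translate the abstract solution produced by Ethier--Kurtz into a genuine solution of the martingale problem living on the path space $\Omega$ described in \eqref{eq:omega}.

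First I would establish the positive maximum principle: if $f\in C^\infty_c(E)$ attains its (nonnegative) maximum at a point $x_0\in E$, then $\widetilde{\mathcal{A}}f(x_0)\le 0$. Here I would lean directly on Lemma~\ref{lem:posmaxprinc}. By hypothesis there is a solution $\PP$ of the martingale problem for $(\mathcal{A},\delta_{x_0})$, so the lemma applies and gives the three conclusions: $\nabla f(x_0)c(x_0)=0$, that $\int \nabla f(x_0)z\,K(x_0,\dd z)$ is finite, and the inequality
\[
\nabla f(x_0) b(x_0) -\int \nabla f(x_0) z\, K(x_0,\dd z)+\half\tr(\nabla^2 f(x_0)c(x_0))\le 0.
\]
Now I would rewrite $\widetilde{\mathcal{A}}f(x_0)$ using the relations \eqref{eq:wbwK} between $(\wb,\wK)$ and $(b,K)$. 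Substituting $\wb(x_0)=b(x_0)+c(x_0)h(x_0)+\int z w(x_0,z)K(x_0,\dd z)$ and $\wK(x_0,\dd z)=(w(x_0,z)+1)K(x_0,\dd z)$, and using $\nabla f(x_0)c(x_0)=0$ to kill the $c(x_0)h(x_0)$ term, the expression should collapse to
\[
\mathcal{A}f(x_0) + \int\bigl(f(x_0+z)-f(x_0)\bigr)w(x_0,z)\,K(x_0,\dd z).
\]
Since $f$ attains its maximum at $x_0$ we have $f(x_0+z)-f(x_0)\le 0$ (note $x_0+z\in E$ because $E+F\subset E$), and since $w>-1$ one does \emph{not} immediately get a sign from the $w$-weighted integral; the correct path is to recognize that this combination is exactly $\widetilde{\mathcal{A}}f(x_0)$ and to compare it with $\mathcal{A}f(x_0)-\int(f(x_0+z)-f(x_0))K(x_0,\dd z)$, which the lemma bounds by the displayed inequality above, i.e.\ by $\le 0$. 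I would therefore reorganize $\widetilde{\mathcal{A}}f(x_0)$ into this $\mathcal{A}$-based quantity plus a residual and check the residual is nonpositive using $f(x_0+z)\le f(x_0)$. This bookkeeping is the step most prone to sign errors, so I would carry the two jump integrals in parallel and cancel the $\nabla f(x_0)z$ drift-correction against the $\int zw\,K$ contribution to $\wb$ explicitly.

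Having secured the positive maximum principle and the $C_0(E)$ range, \cite[Theorem~4.5.4]{ethier86} yields, for each initial distribution $\delta_x$, a solution of the martingale problem for $\widetilde{\mathcal{A}}$ whose sample paths lie in $D_{E_\Delta}[0,\infty)$, possibly reaching the cemetery state $\Delta$; the convention $f(\Delta)=0$ for $f\in C^\infty_c(E)$ makes $\widetilde{\mathcal{A}}f(\Delta)=0$ consistent with $\Delta$ being absorbing. The final step is to confirm that this solution is supported on the set $\Omega$ of \eqref{eq:omega}, namely paths that remain at $\Delta$ once they hit it. This is precisely the statement that $\Delta$ is an absorbing (trap) state for the process, which follows from the Ethier--Kurtz construction together with $\widetilde{\mathcal{A}}f(\Delta)=0$; I would note that the martingale property of $M^f$ forces no mass to leave $\Delta$, so the path measure concentrates on $\Omega$. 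I expect the main obstacle to be the algebraic reduction in the maximum-principle step—correctly matching $\widetilde{\mathcal{A}}f(x_0)$ to the quantity controlled by Lemma~\ref{lem:posmaxprinc}—rather than the invocation of the existence theorem itself.
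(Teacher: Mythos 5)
Your proposal follows the paper's proof essentially verbatim: the positive maximum principle for $\widetilde{\mathcal{A}}$ is obtained by splitting $\widetilde{\mathcal{A}}f(x_0)$ into $\nabla f(x_0)b(x_0)-\int\nabla f(x_0)zK(x_0,\dd z)+\half\tr(\nabla^2 f(x_0)c(x_0))$ and $\int(f(x_0+z)-f(x_0))(w(x_0,z)+1)K(x_0,\dd z)$, both nonpositive by Lemma~\ref{lem:posmaxprinc}, and existence then follows from the Ethier--Kurtz theorem exactly as you describe. The one place you are looser than the paper is the final step: rather than asserting that the Ethier--Kurtz solution already concentrates on $\Omega$, the paper pushes the measure forward under $\omega\mapsto\omega^{T_\Delta}$ with $T_\Delta=\inf\{t\geq 0:X_{t-}=\Delta\mbox{ or }X_t=\Delta\}$ and checks that the stopped process still solves the martingale problem (using $\widetilde{\mathcal{A}}f(\Delta)=0$ and the fact that stopping preserves martingales) --- a small but necessary repair, since the raw solution on $D_{E_\Delta}[0,\infty)$ need not a priori be supported on $\Omega$.
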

\begin{proof}
We check the conditions of \cite[Theorem~4.5.4]{ethier86}. Let
$f\in C^\infty_c(E)$ attain its maximum at some point $x_0\in E$.
By Lemma~\ref{lem:posmaxprinc}, we can write
$\widetilde{\mathcal{A}}f(x_0)$ as the sum of two non-positive
terms, namely
\[
\nabla f(x_0) b(x_0) -\int \nabla f(x_0) z K(x_0,\dd
z)+\half\tr(\nabla^2 f(x_0)c(x_0))
\]
and
\[\int (f(x_0+z)-f(x_0))(w(x_0,z)+1)K(x_0,\dd z).
\]
Hence $\widetilde{\mathcal{A}}f(x_0)\leq 0$. This yields that
$\widetilde{\mathcal{A}}$ satisfies the (positive) maximum
principle. Since $\widetilde{\mathcal{A}}:C_c^\infty(E)\rightarrow
C_0(E)$ and $C_c^\infty(E)$ is dense in $C_0(E)$,
\cite[Theorem~4.5.4]{ethier86} yields for all $x\in E$ the
existence of a solution $\PP_x$ of the martingale problem for
$(\widetilde{\mathcal{A}},\delta_x)$ on
$(D_{E_\Delta}[0,\infty),\mathcal{F}^X)$. In order to obtain a
solution on $\Omega$, we define the stopping time
\begin{align}\label{eq:TDelta}
T_\Delta=\inf\{t\geq 0:X_{t-}=\Delta\mbox{ or }X_t=\Delta\},
\end{align}
and write $X'=X^{T_\Delta}$. Then $X'(\omega)\in\Omega$ for all
$\omega\in D_{E_\Delta}[0,\infty)$ and for all $f\in
C_c^\infty(E)$ it holds that (recall
$\widetilde{\mathcal{A}}f(\Delta)=0$)
\begin{align*}
f(X'_t)-f(X'_0)-\int_0^t \widetilde{\mathcal{A}}f(X'_s)\dd
s&=f(X^{T_\Delta}_t)-f(X^{T_\Delta}_0)-\int_0^{t\wedge T_\Delta}
\widetilde{\mathcal{A}}f(X_s)\dd s\\&=(M^f)^{T_\Delta}_t,
\end{align*}
where $M^f$ is given by (\ref{eq:Mft}) with $\mathcal{A}$ replaced
by $\widetilde{\mathcal{A}}$. Since $M^f$ is a right-continuous
$\PP_x$-martingale on $(\mathcal{F}^X_t)$ for $f\in
C^\infty_c(E)$, $(M^f)^{T_\Delta}$ is a martingale on
$(\mathcal{F}^{X'}_t)$. Hence $\PP_x\circ (X')^{-1}$ is a solution
of the martingale problem for $(\widetilde{\mathcal{A}},\delta_x)$
on $(\Omega,\mathcal{F}^{X'})$ for all $x\in E$, as we needed to
show.
\end{proof}

\begin{prop}\label{prop:2}
Let $x_0\in E$ and suppose there exists a solution $\PP$ of the
martingale problem for $(\widetilde{\mathcal{A}},\delta_{x_0})$ on
$(\Omega,\mathcal{F}^X)$ with $\Omega$ given by (\ref{eq:omega}).
Assume the growth condition
\begin{align}\label{eq:growth}
|\wb(x)|^2+|c(x)|+\int  |z|^2 \wK(x,\dd z)\leq C(1+|x|^2),\mbox{
some $C>0$, all $x\in\R^p$}.
\end{align}
Then it holds that $\PP(X\in D_{E}[0,\infty))=1$.
\end{prop}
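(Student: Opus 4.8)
The claim is a non-explosion statement: the coordinate process, which a priori lives in $E_\Delta$ and is killed at the one-point $\Delta$ when it leaves every compact of $E$, in fact stays in $E$ for all time. I would prove it by a Lyapunov/Gronwall argument. The natural Lyapunov function is $V(x)=1+|x|^2$, which is smooth but not compactly supported. A direct computation gives $\nabla V(x)=2x^\top$, $\nabla^2 V(x)=2\,\Id$, and since $V(x+z)-V(x)-\nabla V(x)z=|z|^2$,
\[
\widetilde{\mathcal{A}}V(x)=2x^\top\wb(x)+\tr(c(x))+\int|z|^2\,\wK(x,\dd z).
\]
Using $2x^\top\wb(x)\le|x|^2+|\wb(x)|^2$ together with the growth condition (\ref{eq:growth}), this is bounded by $C'(1+|x|^2)=C'V(x)$ for a suitable constant $C'$. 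So the first step is to record the Lyapunov inequality $\widetilde{\mathcal{A}}V\le C'V$ on $E$.

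Since $V\notin C^\infty_c(E)$ I would localise. Choose cut-offs $\chi_m\in C^\infty_c(\R^p)$ with $0\le\chi_m\le1$, $\chi_m\equiv1$ on the ball $B_m=\{|x|<m\}$ and $\chi_m\uparrow1$, and set $f_m=\chi_m V|_E\in C^\infty_c(E)$. Introduce the exit times $\tau_n=\inf\{t\ge0:|X_t|\ge n\}$. The crucial observation — the step I expect to be the main obstacle — is to control $\widetilde{\mathcal{A}}f_m$ on $B_n$ for $m\ge n$. On $B_n$ one has $\chi_m\equiv1$ with vanishing derivatives, so the drift and diffusion parts of $\widetilde{\mathcal{A}}f_m$ coincide with those of $\widetilde{\mathcal{A}}V$; for the jump part the difficulty is that a jump $x+z$ may leave $B_m$, where $f_m\ne V$. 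This is resolved by the one-sided bound $f_m(x+z)\le V(x+z)$ (since $0\le\chi_m\le1$ and $V\ge0$), which yields $f_m(x+z)-f_m(x)-\nabla f_m(x)z\le|z|^2$ and hence, for all $x\in B_n$ and $m\ge n$,
\[
\widetilde{\mathcal{A}}f_m(x)\le\widetilde{\mathcal{A}}V(x)\le C'V(x).
\]

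Next I would exploit the martingale property. Since $f_m\in C^\infty_c(E)$, the process $M^{f_m}$ from (\ref{eq:Mft}) (with $\mathcal{A}$ replaced by $\widetilde{\mathcal{A}}$) is by definition a $\PP$-martingale with bounded coefficients, so optional sampling at the bounded stopping time $t\wedge\tau_n$ gives $\E f_m(X_{t\wedge\tau_n})=f_m(x_0)+\E\int_0^{t\wedge\tau_n}\widetilde{\mathcal{A}}f_m(X_s)\,\dd s$. For $s<\tau_n$ we have $X_s\in B_n$, so the integrand is at most $C'V(X_s)\le C'(1+n^2)$; this first yields finiteness of $g_n(t):=\E V(X_{t\wedge\tau_n})$ (let $m\to\infty$ by monotone convergence, using $f_m\uparrow V$), and then, keeping $V(X_{s\wedge\tau_n})$ in place of the crude bound, the Gronwall-type inequality $g_n(t)\le V(x_0)+C'\int_0^t g_n(s)\,\dd s$. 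Gronwall's lemma gives $g_n(t)\le V(x_0)e^{C't}$ uniformly in $n$ (for $n>|x_0|$).

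Finally, by right-continuity of $X$ one has $|X_{\tau_n}|\ge n$ on $\{\tau_n\le t\}$, so Chebyshev's inequality gives $\PP(\tau_n\le t)\le V(x_0)e^{C't}/(1+n^2)\to0$. Since $\tau_n$ increases to the explosion time, which coincides with $T_\Delta$ — jumps remain in $E$ because $E+F\subset E$, so $\Delta$ can be reached only through $|X|$ blowing up — one concludes $\PP(T_\Delta\le t)=0$ for every $t$, hence $T_\Delta=\infty$ almost surely and $\PP(X\in D_E[0,\infty))=1$.
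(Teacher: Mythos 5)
Your argument is correct, and it reaches the conclusion by a genuinely different route than the paper. The paper first invokes \cite[Proposition~3.2]{cherfilyor} to identify the stopped process $X^{T_n}$ as a special semimartingale with the stopped characteristics $(\wb,c,\wK)$, and then applies the appendix Lemma~\ref{lem:sup}, which runs a Gronwall argument on $\E\sup_{t\le T\wedge T_n}|X_t|^2$ using the explicit decomposition (\ref{eq:decompx}) together with Cauchy--Schwarz and Doob's inequality; non-explosion then follows because $\E\sup_{t\le T\wedge T_\Delta}|X_t|<\infty$ forces $T_\Delta>T$ a.s. You instead stay entirely at the level of the martingale problem: the Lyapunov function $V(x)=1+|x|^2$ satisfies $\widetilde{\mathcal{A}}V\le C'V$ by (\ref{eq:growth}), the cut-off trick with the one-sided bound $f_m(x+z)\le V(x+z)$ legitimately transfers this to compactly supported test functions on $B_n$, and Gronwall plus Chebyshev on $\E V(X_{t\wedge\tau_n})$ gives $\PP(\tau_n\le t)\to 0$. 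What your approach buys is self-containedness --- you never need the identification of martingale-problem solutions with semimartingales having prescribed characteristics --- at the cost of a weaker intermediate estimate (a bound on $\E V(X_{t\wedge\tau_n})$ rather than on $\E\sup_{t\le T}|X_t|^2$, which the paper reuses elsewhere, e.g.\ in Lemma~\ref{lem:continx}). Two small points to tighten: first, use the paper's stopping times $T_n=\inf\{t:|X_{t-}|\ge n\mbox{ or }|X_t|\ge n\}$, or note that $M^{f_m}$ is right-continuous so optional sampling applies over $(\mathcal{F}^X_{t+})$, to which your $\tau_n$ is adapted; second, the fact that $\Delta$ can only be reached by explosion (so that $\tau_n\uparrow T_\Delta$ and $X_{t\wedge\tau_n}\in E$ a.s.) is not quite a consequence of $E+F\subset E$ alone --- it is a property of solutions of the martingale problem on $\Omega$ of (\ref{eq:omega}) that the paper imports from the remark preceding \cite[Proposition~3.2]{cherfilyor}, and you should cite it the same way rather than derive it from the state-space inclusion.
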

\begin{proof}
By the remark preceding \cite[Proposition~3.2]{cherfilyor}, a
transition to $\Delta$ can only occur by explosion. Define
stopping times
\[
T_n=\inf\{t\geq 0:|X_{t-}|\geq n\mbox{ or }|X_t|\geq n\}\wedge n.
\]
By \cite[Proposition~3.2]{cherfilyor}, $X^{T_n}$ is a special
semimartingale with differential characteristics
$(\wb(X^{T_n})1_{[0,T_n]},\wc(X^{T_n})1_{[0,T_n]},\wK(X^{T_n},\dd
z)1_{[0,T_n]})$. Lemma~\ref{lem:sup} yields
\[
\E \sup_{t\leq T\wedge T_n}|X_t|\leq C(T)<\infty,
\]
for all $T>0$, with $C(T)$ a positive constant that does not
depend on $n$. Letting $n\rightarrow\infty$ we get
\[
\E \sup_{t\leq T\wedge T_\Delta}|X_t|<\infty,
\]
for all $T>0$, where $T_\Delta$ is given by (\ref{eq:TDelta}).
Hence $T_\Delta>T$ almost surely for all $T$. This proves the
assertion.
\end{proof}
Having derived the existence of a solution of the martingale
problem for $\widetilde{\mathcal{A}}$ from the existence of a
solution for ${\mathcal{A}}$, we are now ready to prove the
martingale property of a stochastic exponential by the use of
\cite[Theorem~2.4]{cherfilyor}.
\begin{theorem}\label{th:expmart}
Suppose (\ref{eq:growth}) holds and
\begin{equation}\label{eq:condbcK2}
\begin{split}
&x\mapsto h(x)^\top c(x) h(x)\mbox{ and }x\mapsto\int
(w(x,z)-\log(w(x,z)+1))K(x,\dd z)\\&\mbox{ are bounded on
compacta}.
\end{split}
\end{equation}
Let $\Omega=D_E[0,\infty)$, write $H_t=h(X_t)$, $W(t,z)=w(X_t,z)$
and suppose $\PP$ is a solution of the martingale problem for
$\mathcal{A}$ on $(\Omega,\mathcal{F}^X)$, which yields the
decomposition (\ref{eq:decompx}) for $X$. If the martingale
problem for $\mathcal{A}$ is well-posed, then
\[
L=\mathcal{E}(H\cdot X^c+W\ast(\mu^X-\nu^X))
\]
is an $((\mathcal{F}^X_{t+}),\PP)$-martingale and the martingale
problem for $\widetilde{\mathcal{A}}$ is well-posed.
\end{theorem}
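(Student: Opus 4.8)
The plan is to deduce both assertions at once from \cite[Theorem~2.4]{cherfilyor}, so the task reduces to verifying, in the present framework, the three standing hypotheses of that result: well-posedness of the martingale problem for $\mathcal{A}$, existence of a solution of the martingale problem for $\widetilde{\mathcal{A}}$ on $(D_E[0,\infty),\mathcal{F}^X)$, and local integrability of the density process. The first hypothesis is assumed outright; the relations \eqref{eq:wbwK} already express $\widetilde{\mathcal{A}}$ as the $(h,w)$-transform of $\mathcal{A}$ in exactly the form demanded by \cite{cherfilyor}; and the conclusion of \cite[Theorem~2.4]{cherfilyor} is precisely that $L$ is a true $((\mathcal{F}^X_{t+}),\PP)$-martingale and that the induced change of measure sets up a bijection between the solution sets of the two martingale problems, so that well-posedness passes from $\mathcal{A}$ to $\widetilde{\mathcal{A}}$.

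The substantive input is the existence of an $\widetilde{\mathcal{A}}$-solution, which I would assemble from the two preceding propositions. Well-posedness of the martingale problem for $\mathcal{A}$ provides, for every $x\in E$, a solution of $(\mathcal{A},\delta_x)$ on $D_E[0,\infty)$, so Proposition~\ref{prop:1} yields a solution of $(\widetilde{\mathcal{A}},\delta_x)$ on the enlarged path space $\Omega$ of \eqref{eq:omega} in which the cemetery state $\Delta$ is absorbing. The growth condition \eqref{eq:growth} is exactly the hypothesis of Proposition~\ref{prop:2}, which forbids an explosion to $\Delta$ and thereby relocates this solution onto $D_E[0,\infty)$. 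Together Propositions~\ref{prop:1} and~\ref{prop:2} supply the existence required by \cite[Theorem~2.4]{cherfilyor}.

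It then remains to check the local boundedness conditions \eqref{eq:condbcK2}. These control the two ingredients of the \emph{reverse} measure change from $\widetilde{\PP}$ back to $\PP$: the map $x\mapsto h(x)^\top c(x) h(x)$ is the angle bracket of the continuous driving part, while $x\mapsto\int(w(x,z)-\log(w(x,z)+1))K(x,\dd z)$ is the jump-entropy integral of the inverse transform, obtained from the forward kernel through the substitution $w\mapsto -w/(w+1)$ (so that the reversed jump weight $\widetilde{w}$ satisfies $\widetilde{w}+1=(w+1)^{-1}$). Boundedness of both on compacta is what \cite[Theorem~2.4]{cherfilyor} needs to guarantee that $L$ and its reciprocal are genuine martingales---by a L\'epingle--M\'emin type estimate as in the proof of Lemma~\ref{lem:posmaxprinc}---which simultaneously upgrades $L$ to an $((\mathcal{F}^X_{t+}),\PP)$-martingale and makes the measure change invertible. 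Invoking \cite[Theorem~2.4]{cherfilyor} then delivers both conclusions.

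The main obstacle is the existence step, and this is why it was split off into Propositions~\ref{prop:1} and~\ref{prop:2}: unlike in the classical Girsanov setting one cannot simply posit a solution for $\widetilde{\mathcal{A}}$, whose drift $\wb$ and jump kernel $\wK$ genuinely differ from those of $\mathcal{A}$. The continuity conditions \eqref{eq:condcontinu} are imposed precisely to force the range of $\widetilde{\mathcal{A}}$ into $C_0(E)$, allowing the use of \cite[Theorem~4.5.4]{ethier86} in Proposition~\ref{prop:1}, while the possible escape to $\Delta$ must be excluded by the separate growth argument of Proposition~\ref{prop:2}. A secondary point demanding care is the invertibility of the measure change: well-posedness transfers only because the reverse entropy integral in \eqref{eq:condbcK2} keeps $1/L$ a martingale under $\widetilde{\PP}$, and one should also confirm that the right-continuous filtration $(\mathcal{F}^X_{t+})$ and our measurability and boundedness hypotheses match the standing assumptions of the framework in \cite{cherfilyor}.
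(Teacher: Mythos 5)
Your proposal follows the paper's route: existence of a solution for $\widetilde{\mathcal{A}}$ via Proposition~\ref{prop:1} (Ethier--Kurtz on the compactified path space) and Proposition~\ref{prop:2} (no explosion under \eqref{eq:growth}), followed by an appeal to \cite[Theorem~2.4]{cherfilyor}, with \eqref{eq:condbcK2} correctly identified as the entropy conditions for the \emph{reverse} transform $w\mapsto -w/(w+1)$. Two points are stated too loosely, however. First, the conclusion of \cite[Theorem~2.4]{cherfilyor} is not ``precisely that $L$ is a true $((\mathcal{F}^X_{t+}),\PP)$-martingale'': the paper applies that theorem with the roles of $(\mathcal{A},\PP)$ and $(\widetilde{\mathcal{A}},\Q)$ reversed, taking $\phi_1=-h$, $\phi_2=0$, $\phi_3=1/(w+1)$, and what comes out is a positive $\Q_x$-martingale $D$ with $\left.\PP_x\right|_{\mathcal{F}^X_t}=D_t\cdot\left.\Q_x\right|_{\mathcal{F}^X_t}$, where $D=\mathcal{E}(\phi_1(X)\cdot\wX^c+(\phi_3(X,z)-1)\ast(\mu^X-\wnu^X))$ is an exponential driven by the $\Q$-characteristics. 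One must then actually compute, via the product rule for stochastic exponentials, that $D^{-1}=\mathcal{E}(H\cdot X^c+W\ast(\mu^X-\nu^X))=L$, and conclude that $L$ is a $\PP_x$-martingale because it is the density process of $\Q_x$ relative to $\PP_x$. This inversion step is the substance of the conclusion and should not be absorbed into the citation. Second, the theorem's $\PP$ is an arbitrary solution of the martingale problem for $\mathcal{A}$, not necessarily one with a Dirac initial law; the argument above is carried out for $\PP_x$ and must then be extended to a general initial distribution $\eta$ by writing $\Q_\eta=\int\Q_x\,\eta(\dd x)$ and using Remark~\ref{remark} part~2, which your write-up omits.
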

\begin{proof}
First assume $\PP=\PP_x$ is a solution of the martingale problem
for $(\mathcal{A},\delta_x)$ for some $x\in E$. By
Proposition~\ref{prop:1} and \ref{prop:2}, there exists a solution
$\Q_x$ of the martingale problem for
$(\widetilde{\mathcal{A}},\delta_x)$ on $(\Omega,\mathcal{F}^X)$.
We can apply \cite[Theorem~2.4]{cherfilyor} with the roles of
$(\mathcal{A}, \PP)$ and $(\widetilde{\mathcal{A}},\Q)$ reversed.
Indeed, in the notation of \cite{cherfilyor} we have $\phi_1=-h$,
$\phi_2=0$, $\phi_3=1/(w+1)$ and these functions satisfy the
criterion mentioned in \cite[Remark~2.5]{cherfilyor} by the
assumptions. This yields $\left.\PP_x\right|_{\mathcal{F}_t^X}
\sim\left.\Q_x\right|_{\mathcal{F}_t^X}$ for all $t>0$ and the
existence of a positive $\Q_x$-martingale $D$ such that
\[
\left.\PP_x\right|_{\mathcal{F}_t^X} = D_t \cdot
\left.\Q_x\right|_{\mathcal{F}_t^X}\mbox{ for all }t\geq0.
\]
By Remark~\ref{remark} part~1, $X$ is a special semimartingale on
$(\Omega,\mathcal{F}^X,(\mathcal{F}^X_{t+}),\Q_x)$ with
decomposition
\[
X=X_0+\wB+\wX^c+z\ast(\mu^X-\wnu^X),
\]
where $\wB_t=\int_0^t \wb(X_s)\dd s$, $\wnu^X(\dd t,\dd
z)=\wK(X_t,\dd z)\dd t$ and $\wX^c$ the continuous local
martingale part with quadratic variation $\langle
\wX^c\rangle_t=\int_0^t c(X_s)\dd s$.
 A close inspection of the proof of
\cite[Theorem~2.4]{cherfilyor} reveals that
\[
D=\mathcal{E}(\phi_1(X)\cdot \wX^c+(\phi_3(X,z)-1)\ast
(\mu^X-\wnu^X)).
\]
Applying the product rule for stochastic exponentials one verifies
that
\begin{align*}
D^{-1}&=\mathcal{E}(-\phi_1(X)\cdot
X^c+(1-\phi_3(X,z))/\phi_3(X,z)\ast (\mu^X-\nu^X)),
\end{align*}
so that $D^{-1}=\mathcal{E}(H\cdot X^c+W\ast(\mu^X-\nu^X))=L$.
Since
\[
\left.\Q_x\right|_{\mathcal{F}_t^X} = D^{-1}_t \cdot
\left.\PP_x\right|_{\mathcal{F}_t^X},\mbox{ for all $t>0$},
\]
it follows that $L$ is a $\PP_x$-martingale as well as the
martingale problem for $\widetilde{\mathcal{A}}$ on $\Omega$ is
well-posed.

Now assume $\PP=\PP_\eta$ is a solution of the martingale problem
for $(\mathcal{A},\eta)$ with $\eta$ an arbitrary probability
measure on $E$. By Remark~\ref{remark} part~2,
$\Q_\lambda=\int\Q_x\lambda(\dd x)$ is the (unique) solution of
the martingale problem for $(\widetilde{\mathcal{A}},\eta)$ on
$\Omega$. Hence we can repeat the above argument with $\PP_x$ and
$\Q_x$ replaced by $\PP_\eta$ and $\Q_\eta$ to see that $L$ is a
$\PP_\eta$-martingale.
\end{proof}

\section{Affine jump-diffusions and affine processes}\label{sec:affineSDEs}
\subsection{Definitions}
We start with the definition of affine jump-diffusions and affine
processes. The former are defined from the point of view of
semimartingale theory as being jump-diffusions with \emph{affine}
differential characteristics. The latter are characterized from
the point of view of Markov process theory as having an
exponentially \emph{affine} expression for their characteristic
functions. As in the previous section we restrict ourselves to
special semimartingales.
\begin{definition}\label{def:affine}
The martingale problem for $\mathcal{A}$ given by (\ref{eq:opA})
is called an \emph{affine} martingale problem if $b$, $c$ and $K$
are affine in the sense that
\begin{equation}\label{eq:bcK}
\begin{split}
b(x)&=a^0+\sum_{i=1}^p a^i x_i\\
c(x)&=A^0+\sum_{i=1}^p A^i x_i\\
K(x,\dd z)&=K^0(\dd z)+\sum_{i=1}^p K^i(\dd z) x_i,
\end{split}
\end{equation}
for some column vectors $a^i\in\R^p$, symmetric matrices
$A^i\in\R^{p\times p}$ and (signed) measures $K^i$ on $F$
satisfying $\int(|z|^2\wedge |z|)|K^i|(\dd z)<\infty$. If the
affine martingale problem is well-posed and $\PP$ is a solution,
then the coordinate process $X$ is called an \emph{affine
jump-diffusion} on
$(\Omega,\mathcal{F}^X,(\mathcal{F}^X_{t+}),\PP)$ with state space
$E$.
\end{definition}
\begin{definition}
If the coordinate process $X$ on $\Omega=D_E[0,\infty)$ is a
Markov process with state space $E$ and transition kernel
$(\PP_x)_{x\in E}$ such that for all $u\in\ii\R^p$, $t\geq0$ we
have
\begin{align}\label{eq:char}
\E_x \exp(u^\top X_t)=\exp(\psi_0(t,u)+\psi(t,u)^\top x),\mbox{
for all }x\in E,
\end{align}
for some $\psi_0:[0,\infty)\times\ii\R^p\rightarrow\C$ and
$\psi:[0,\infty)\times\ii\R^p\rightarrow\C^p$, then
$(X,(\PP_x)_{x\in E})$ is called an \emph{affine process}. Note
that $\psi_0(t,u)$ may be altered by multiples of $2\pi\ii$. If in
addition $\psi_0$ and $\psi$ are continuously differentiable in
their first argument, it is called a \emph{regular} affine
process. In that case we put $\psi_0(0,u)=0$, so that $\psi_0$ and
$\psi$ are uniquely determined by (\ref{eq:char}).
\end{definition}

For existence of an affine jump-diffusion, restrictions need to be
imposed on the state space $E$ and the parameters
$(a^i,A^i,K^i)_{0\leq i\leq p}$ in order that $c(x)$ is a positive
semi-definite matrix and $K(x,\dd z)$ is a non-negative measure
for $x\in E$, while in addition $E$ is stochastic invariant for
$X$ (that is, $X$ does not leave the set $E$). These parameter
conditions are called \emph{admissibility conditions} and the
corresponding parameter set $(a^i,A^i,K^i)_{0\leq i\leq p}$ is
called \emph{admissible}.

Possible state spaces amongst others are the canonical state space
$\R^m_+\times\R^{p-m}$, the cone of positive semi-definite
matrices $S^p_+$ and quadratic state spaces including the
parabolic state space $\{x\in\R^p:x_1\geq \sum_{i=2}^p x_i^2\}$
and the Lorentz cone $\{x\in\R^p:x_1\geq0,x_1^2\geq \sum_{i=2}^p
x_i^2\}$, see respectively \cite{dfs03,cfmt09,part2} for the
existence and uniqueness of the associated affine jump-diffusion.
We note that the matrix-valued affine jump-diffusions are
contained in the framework of Definition~\ref{def:affine} as we
can identify symmetric matrices with vectors using the
half-vectorization operator $\vech: S^p\rightarrow\R^{p(p+1)/2}$
(the linear operator that stacks the elements from the upper
triangle of a symmetric matrix into a vector).

%


Equivalence of affine jump-diffusions and affine processes has
only been proved for the canonical state space
$\R^m_+\times\R^{p-m}$ in \cite{dfs03} with the use of the
admissibility conditions. For other state spaces this appears much
harder as the admissibility conditions become more involved, while
for arbitrary state space one has no access at all to these
conditions. One of the aims in this paper is to establish the
equivalence between affine jump-diffusions and (regular) affine
processes with an arbitrary state space under well-posedness of
the martingale problem for $\mathcal{A}$. One direction is
relatively easy and has been proved for the diffusion case in
\cite[Theorem~2.2]{fm09}. The next proposition also incorporates
jumps. The converse direction is much harder to establish and will
be proved with the least restrictions in
Section~\ref{sec:boundedexp} in Theorem~\ref{th:BsupsetU}.
\begin{prop}\label{prop:afprocisafdif}
Let $E\subset\R^p$ be closed with non-empty interior,
$E=\overline{E^\circ}$ and suppose the martingale problem for
$\mathcal{A}$ is well-posed. Let $\PP$ be a solution of the
martingale problem for $\mathcal{A}$ on $\Omega$ and $\PP_x$ for
$(\mathcal{A},\delta_x)$, $x\in E$. If $(X,(\PP_x)_{x\in E})$ is a
regular affine process, then $X$ is an affine jump-diffusion on
$(\Omega,\mathcal{F}^X,(\mathcal{F}^X_{t+}),\PP)$ with state space
$E$, say with differential characteristics $(b(X),c(X),K(X,\dd
z))$ given by (\ref{eq:bcK}).
Moreover, for all $u\in\ii\R^p$ it holds that
$(\psi_0(\cdot,u),\psi(\cdot,u))$ characterized by (\ref{eq:char})
and $\psi_0(0,u)=0$, solves the system of generalized Riccati
equations
\begin{align}\label{eq:riccati}
\dot{\psi_i}=R_i(\psi),\quad \psi_i(0)=u_i,\quad i=0,\ldots,p,
\end{align}
with
\begin{align}\label{eq:Ri}
R_i(y)=y^\top a^i +\half y^\top A^i y +\int (e^{y^\top z}-1-y^\top
z)K^i(\dd z),
\end{align}
where we write $u_0=0$.
\end{prop}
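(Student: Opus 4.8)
The plan is to exploit the regularity of the affine process together with the well-posedness of the martingale problem, first to identify the differential characteristics and then to propagate the resulting relation in time. Since the martingale problem for $\mathcal{A}$ is well-posed and $\PP$ solves it, Remark~\ref{remark} part~1 shows that under $\PP$ the coordinate process $X$ is a jump-diffusion with the decomposition \eqref{eq:decompx} and differential characteristics $(b(X),c(X),K(X,\dd z))$; the whole point is to show that $b$, $c$ and $K$ are affine of the form \eqref{eq:bcK}. The central object is the exponential $f_u(x)=\exp(u^\top x)$ for $u\in\ii\R^p$, which is bounded but not compactly supported. A direct computation gives $\mathcal{A}f_u(x)=f_u(x)F(x)$ with $F(x)=u^\top b(x)+\half u^\top c(x)u+\int(e^{u^\top z}-1-u^\top z)K(x,\dd z)$.

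First I would establish a Dynkin-type identity $\E_x f_u(X_t)=f_u(x)+\int_0^t\E_x\mathcal{A}f_u(X_s)\,\dd s$. This is the main technical obstacle, since $f_u\notin C_c^\infty(E)$ and so the martingale problem cannot be quoted directly. I would apply It\^o's formula to $f_u(X_t)$ using \eqref{eq:decompx}, localise along stopping times $T_n=\inf\{t:|X_t|\geq n\}\wedge n$, and use the polynomial moment bounds of Lemma~\ref{lem:sup} together with the growth conditions \eqref{eq:growth}, \eqref{eq:condgrowK} to verify that the local martingale part (whose angle bracket and jump-compensator are controlled, for imaginary $u$, by $|u|^2\int_0^t(1+|X_s|^2)\,\dd s$) is a genuine martingale, so that taking expectations and letting $n\to\infty$ yields the identity. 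Differentiating it at $t=0$ and using regularity and $\psi_0(0,u)=0$, $\psi(0,u)=u$ gives, after cancelling $e^{u^\top x}$, that $F(x)=\dot\psi_0(0,u)+\dot\psi(0,u)^\top x$ is affine in $x$ for every $u\in\ii\R^p$.

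From affineness of the characteristic exponent I would extract the affine structure of the triplet. For fixed $x$, $F(x)$ is the L\'evy--Khintchine exponent of $(b(x),c(x),K(x,\dd z))$ with compensation $u^\top z$. Choosing $p+1$ affinely independent points $x_0,\dots,x_p\in E^\circ$ (possible as $E$ has non-empty interior) and writing $x=\sum_j\lambda_j(x)x_j$ in affine coordinates, affineness gives $F(x)=\sum_j\lambda_j(x)F(x_j)$ for all $x\in E$, so $F(x)$ is simultaneously the exponent of $(b(x),c(x),K(x,\dd z))$ and of $\sum_j\lambda_j(x)(b(x_j),c(x_j),K(x_j,\dd z))$. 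Uniqueness of the L\'evy--Khintchine representation then forces $b$, $c$ and $K$ to equal these affine combinations of their values at the $x_j$, i.e.\ \eqref{eq:bcK} holds (with $E=\overline{E^\circ}$ ensuring the identity extends to all of $E$), which proves that $X$ is an affine jump-diffusion. Inserting \eqref{eq:bcK} into $F$ rewrites it as $F(x)=R_0(u)+\sum_{i=1}^p R_i(u)x_i$ with $R_i$ as in \eqref{eq:Ri}, whence matching coefficients gives $\dot\psi_i(0,u)=R_i(u)$, i.e.\ the Riccati equations \eqref{eq:riccati} at $t=0$.

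Finally I would propagate the Riccati relation to all $t$ via the Markov/semigroup property. Writing $P_s f_u(y)=\E_y e^{u^\top X_s}=e^{\psi_0(s,u)+\psi(s,u)^\top y}=:g(y)$, the identity $\E_x e^{u^\top X_{t+s}}=(P_t g)(x)$, differentiated in $t$ at $t=0$, equals $\mathcal{A}g(x)$. Since $g$ is again an exponential, now with complex coefficient $\psi(s,u)$, the same computation gives $\mathcal{A}g(x)=g(x)\bigl(R_0(\psi(s,u))+\sum_i R_i(\psi(s,u))x_i\bigr)$; comparing with the $t$-derivative of the left-hand side $e^{\psi_0(s,u)+\psi(s,u)^\top x}$ and matching coefficients in $x$ yields precisely $\dot\psi_i(s,u)=R_i(\psi(s,u))$ for all $s$. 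The subtle point here is finiteness of $R_i(\psi(s,u))$, as $\psi(s,u)$ is complex with possibly non-zero real part; this is where the affine-process hypothesis is used crucially. Indeed $\E_x e^{u^\top X_t}=e^{\psi_0+\psi^\top x}$ is non-vanishing and of modulus $\le 1$ on $E$, and $E+F\subset E$, so for a fixed interior point $x_0$ one has $|e^{\psi(s,u)^\top z}|=|g(x_0+z)|/|g(x_0)|\le 1/|g(x_0)|$ uniformly in $z\in F$, which together with $\int_{\{|z|>1\}}|z|\,|K^i|(\dd z)<\infty$ makes the integral defining $R_i(\psi(s,u))$ absolutely convergent; the same localisation and moment argument as above justifies applying $\mathcal{A}$ to the bounded function $g$.
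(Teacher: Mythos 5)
Your overall route is essentially the paper's: apply It\^o/Dynkin to complex exponentials of $X$, use the martingale property to kill the drift, deduce that the characteristic exponent $F(x)=\dot\psi_0(0,u)+\dot\psi(0,u)^\top x$ is affine in $x$, extract the affine form of the triplet via uniqueness of the L\'evy--Khintchine representation, and match coefficients to obtain the Riccati system. The organizational difference (a $t=0$ Dynkin identity plus semigroup propagation, versus the paper's single identity (\ref{eq:dotpsi}) valid for every horizon $T$) is cosmetic, and your resolution of the integrability of $R_i(\psi(s,u))$ for complex $\psi(s,u)$ with nonzero real part — via $|\E_y e^{u^\top X_s}|\le 1$ and $E+F\subset E$ — is a correct treatment of a genuine subtlety that the paper handles more tersely.

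There is, however, a real gap in the step extracting $(b,c,K)$ from the affineness of $F$. You write $x=\sum_j\lambda_j(x)x_j$ in affine coordinates relative to $p+1$ affinely independent points of $E^\circ$ and invoke uniqueness of the L\'evy--Khintchine representation to conclude $(b,c,K)(x)=\sum_j\lambda_j(x)\,(b,c,K)(x_j)$ \emph{for all} $x\in E$. Outside the convex hull of $x_0,\dots,x_p$ some coefficients $\lambda_j(x)$ are negative, so $\sum_j\lambda_j(x)K(x_j,\dd z)$ is merely a signed measure and $\sum_j\lambda_j(x)c(x_j)$ need not be positive semi-definite; the uniqueness theorem you are quoting presupposes that \emph{both} competing representations are genuine L\'evy triplets, and an injectivity statement for signed measures satisfying only $\int(|z|^2\wedge|z|)\,|\mu|(\dd z)<\infty$ does not come for free (one cannot, for instance, simply differentiate the exponent twice under the integral to recover $zz^\top\mu(\dd z)$ without extra integrability). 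As written, your argument yields (\ref{eq:bcK}) only on the simplex. This is precisely the difficulty the paper's proof is engineered to avoid: it first isolates $b$ and $c$ by explicit limits in $u$ (where no positivity is needed), and for $K$ it works on a small box $B_k$ around each $k\in E^\circ$ on which the finite-difference affine combination is shown to be a non-negative measure, applies triplet uniqueness there, and only then patches over $E^\circ$ and passes to $E=\overline{E^\circ}$. Your proof can be repaired in the same spirit — restrict to the region where the barycentric coefficients are non-negative and cover $E^\circ$ by such patches — but the blanket ``for all $x\in E$'' conclusion needs this extra work.
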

\begin{proof}
Fix $T>0$ and $u\in\ii\R^p$. By the Markov property, it holds
$\PP$-almost surely that
\begin{align*}
\E \exp(u^\top X_T|\mathcal{F}^X_t)&=\E_{X_t} \exp(u^\top
X_{T-t})\\&=\exp(\psi_0(T-t,u)+\psi(T-t,u)^\top X_t)=:f(t,X_t),
\end{align*}
for all $t\leq T$.
For convenience in the next display we write $\psi$ and
$\dot{\psi}$ instead of $\psi(T-t,u)$ and $\dot{\psi}(T-t,u)$. By
Remark~\ref{remark} part~1, $X$ is a special jump-diffusion and
admits the decomposition (\ref{eq:decompx}). It\^{o}'s formula
gives
\begin{equation}\label{eq:ftXt}
\begin{split}
\frac{\dd f(t,X_t)}{f(t,X_{t-})}&=(-\dot{\psi_0}-\dot{\psi}^\top
X_t)\dd t +\psi^\top\dd X_t +\half \psi^\top c(X_t)\psi\dd t
\\&+\int_{z\in F}(e^{\psi^\top
z}-1-\psi^\top z)\mu^X(\dd t,\dd z)\\
&=\psi^\top \dd X_t^c +\int_{z\in F}(e^{\psi^\top
z}-1)(\mu^X-\nu^X)(\dd t,\dd z)+I(t,X_t)\dd t,
\end{split}
\end{equation}
with
\[
I(t,x)=-\dot{\psi_0}-\dot{\psi}^\top x+\psi^\top
b(x)+\half\psi^\top c(x)\psi+\int(e^{\psi^\top z}-1-\psi^\top z
)K(x,\dd z),
\]
and all expression are well-defined as $f$ is bounded, see
\cite[Theorem~II.2.42]{JacShir}. Since $f(t,X_t)$ is a
$\PP$-martingale, it follows that $\int_0^t I(s,X_s)\dd s=0$,
$\PP$-a.s. Right-continuity of $I(t,X_t)$ yields that $I(t,X_t)=0$
for all $t\geq0$, $\PP$-a.s. In particular $I(0,X_0)=0$,
$\PP$-a.s. Choosing $\PP=\PP_x$ for $x\in E$, we obtain $I(0,x)=0$
for all $x\in E$, i.e.\
\begin{equation}\label{eq:dotpsi}
\begin{split}
\dot{\psi_0}(T,u)+\dot{\psi}(T,u)^\top x&=\psi(T,u)^\top
b(x)+\half\psi(T,u)^\top c(x)\psi(T,u)\\&+\int(e^{\psi(T,u)^\top
z}-1-\psi(T,u)^\top z )K(x,\dd z).
\end{split}
\end{equation}
This holds for all $T\geq0$, $u\in\ii\R^p$. In particular it holds
for $T=0$. We have $\psi(0, u)= u$ for $u\in\R^p$. Write $u=\ii y$
for $y\in\R^p$, then we get
\begin{align*}
\dot{\psi_0}(0,\ii y)+\dot{\psi}(0, \ii y)^\top x= \ii y^\top
b(x)-y^\top c(x)y+\int(e^{ \ii y^\top z}-1- \ii y^\top z )K(x,\dd
z),
\end{align*}
for all $y\in\R^p$. Differentiating the left- and right-hand side
with respect to $y_i$ in $y_i=0$ and putting $y_k=0$ for $k\neq i$
gives that $b_i(x)$ is affine for all $i\leq p$. Dividing the
left- and right-hand side by $y_i y_j$ for $i,j\leq p$, putting
$y_k=0$ for $k\neq i,j$ and letting $y_i\rightarrow\infty$,
$y_j\rightarrow\infty$, we deduce that $c_{ij}(x)$ is affine.
Hence $c(x)$ is affine and also $\int(e^{ \ii y^\top z}-1- \ii
y^\top z )K(x,\dd z)$ is affine in $x$ for all $y\in\R^p$. To show
that $K(x,\dd z)$ is affine in $x$, we fix $k\in E^\circ$
arbitrary and take $\varepsilon>0$ such that
\[
\{x\in\R^p:k_i\leq x_i\leq k_i+\varepsilon\mbox{ for all }i
\}\subset E.
\]
Define
\begin{align*}
K^0(\dd z)&=K(k,\dd z)-\sum_{i=1}^p (K(k+\varepsilon e_i,\dd
z)-K(k,\dd z))k_i/\varepsilon\\ K^i(\dd z)&=(K(k+\varepsilon
e_i,\dd z)-K(k,\dd z))/\varepsilon,\qquad\mbox{ for
$i=1,\ldots,p$}.
\end{align*}
Then it follows that
\[
\int(e^{ u^\top z}-1- u^\top z )K(x,\dd z)=\int(e^{ u^\top z}-1-
u^\top z )(K^0(\dd z)+\sum_{i=1}^p K^i(\dd z)x_i),
\]
for all $u\in\ii\R^p$, $x\in E$, since the left-hand side is
affine and is uniquely determined by the values at $x=k$ and
$x=k+\varepsilon e_i$, $i=1,\ldots,p$. Equality of the left- and
right-hand side at these points follows from the identity
\begin{align*}
K^0(\dd z)+\sum_{i=1}^p K^i(\dd z)x_i &= K(k,\dd
z)(1+\sum_{i=1}^p(k_i-x_i)/\varepsilon)\\&+\sum_{i=1}^p
K(k+\varepsilon e_i,\dd z)(x_i-k_i)/\varepsilon.
\end{align*}
Note that the right-hand side is a non-negative measure for $x\in
B_k$, where $B_k$ is given by
\[
B_k:=\{x\in\R^p:k_i\leq x_i\leq k_i+\varepsilon/p \mbox{ for all
}i \}.
\]
By uniqueness of the L\'{e}vy triplet (see
\cite[Lemma~II.2.44]{JacShir}), this yields that $K(x,\dd
z)=K^0(\dd z)+\sum_{i=1}^p K^i(\dd z)x_i$ for $x\in B_k$. Since
$k\in E^\circ$ is chosen arbitrarily, we have an affine expression
for $K(x,\dd z)$ on a neighborhood of each $x\in E^\circ$. From
this it follows that $K(x,\dd z)$ is affine on the whole of
$E=\overline{E^\circ}$. Hence $X$ is an affine jump-diffusion. Let
the differential characteristics $(b(X),c(X),K(X,\dd z))$ be given
by (\ref{eq:bcK}). Plugging these into (\ref{eq:dotpsi}) and
separating first order terms in $x$ gives (\ref{eq:riccati}).
\end{proof}

\subsection{The affine transform formula}

The expression (\ref{eq:char}) where $(\psi_0,\psi)$ solve the
system of Riccati equations (\ref{eq:riccati}), is called the
\emph{affine transform formula}. In the previous subsection we
obtained this formula for the characteristic function of an affine
process, with a general state space. This subsection is devoted to
the validity of the affine transform formula for affine
\emph{jump-diffusions} with a general state space, for arbitrary
parameters $u\in\C^p$. The key step is the following proposition
which is a direct application of Theorem~\ref{th:expmart}.
\begin{prop}\label{prop:Lismart}
Suppose the affine martingale problem for $\mathcal{A}$ given by
(\ref{eq:opA}) and (\ref{eq:bcK}) is well-posed. Let
$h:E\rightarrow\R^p$ and $w:E\times F\rightarrow (-1,\infty)$ be
measurable, write $H_t=h(X_t)$, $W(t,z)=w(X_t,z)$ and let $\PP$ be
a solution of the martingale problem for $\mathcal{A}$ on
$\Omega$, which yields the decomposition (\ref{eq:decompx}) for
$X$. Then
\[
L=\mathcal{E}(H\cdot X^c+W\ast(\mu^X-\nu^X))
\]
is an $((\mathcal{F}^X_{t+}),\PP)$-martingale under the additional
assumptions
\begin{enumerate}
\item $h$ is bounded and continuous,
\item $x\mapsto\int |z| w(x,z) |K^i|(\dd z)$ is continuous and
finite
\item $x\mapsto \int (|z|^2\wedge |z|)(w(x,z)+1)|K^i|(\dd z)$ is
continuous and finite,
\item $\int |z|^2(w(x,z)+1)|K^i|(\dd z)|x_i|\leq C(1+|x|^2)$, for some $C>0$, all $x\in E$,
\item $x\mapsto \int(w(x,z)-\log(w(x,z)+1))|K^i|(\dd z)$ is bounded on
compacta,
\item $\int |z|^q\log|z|(w(x,z)+1)|K^i|(\dd z)|x_i|\leq C(1+|x|^q)$, for some
$C>0$, $q>0$, all $x\in E$,
\end{enumerate}
for all $i=0,\ldots,p$, where we write $x_0:=1$. Furthermore, the
martingale problem for $\widetilde{\mathcal{A}}$ given by
(\ref{eq:opwA}) and (\ref{eq:wbwK}) is well-posed.
\end{prop}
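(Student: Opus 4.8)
The plan is to read this proposition as a verification exercise for the hypotheses of Theorem~\ref{th:expmart}: once all of its assumptions are checked for the affine data $(b,c,K)$ and the pair $(h,w)$, the two conclusions---that $L$ is an $((\mathcal{F}^X_{t+}),\PP)$-martingale and that the martingale problem for $\widetilde{\mathcal{A}}$ is well-posed---follow verbatim. First I would record that the affine structure (\ref{eq:bcK}), together with the standing integrability $\int(|z|^2\wedge|z|)|K^i|(\dd z)<\infty$ from Definition~\ref{def:affine}, places us inside the Section~\ref{sec:prelim} framework: since $b,c$ are affine (hence bounded on compacta) and $\int(|z|^2\wedge|z|)K(x,\dd z)=\sum_{i=0}^p x_i\int(|z|^2\wedge|z|)K^i(\dd z)$ is affine in $x$, condition (\ref{eq:condbcK}) holds; and because $\int_{\{|z|>1\}}|z|\,|K^i|(\dd z)\le\int(|z|^2\wedge|z|)|K^i|(\dd z)<\infty$, condition (\ref{eq:condgrowK}) holds with $q=1$. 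It then remains to verify, for $\wb$ and $\wK$ defined through (\ref{eq:wbwK}), the continuity conditions (\ref{eq:condcontinu}), the log-moment growth (\ref{eq:condmoregrowK}), the compacta-bounds (\ref{eq:condbcK2}) and the quadratic growth (\ref{eq:growth}).

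The continuity and boundedness conditions are the routine part, and each matches one of the six assumptions after substituting $\wK(x,\dd z)=\sum_{i=0}^p x_i(w(x,z)+1)K^i(\dd z)$ (with $x_0:=1$) and $\wb(x)=b(x)+c(x)h(x)+\int z\, w(x,z)K(x,\dd z)$. Continuity of $c$ is immediate from affineness; continuity of $\wb$ follows from affineness of $b,c$, continuity of $h$ (assumption~(1)), and continuity and finiteness of the jump-drift coefficients $x\mapsto\int z\, w(x,z)K^i(\dd z)$, which I would obtain from assumption~(2) by a dominated-convergence argument based on the continuity of the total mass $\int|z|\,w(x,z)|K^i|(\dd z)$. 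Weak continuity of $(|z|^2\wedge|z|)\wK(\cdot,\dd z)$ reduces, after expanding in $i$, to continuity of $x\mapsto\int f(z)(|z|^2\wedge|z|)(w(x,z)+1)K^i(\dd z)$ for $f\in C_b(F)$, which is exactly assumption~(3); so (\ref{eq:condcontinu}) holds. The tail bound (\ref{eq:condmoregrowK}) is assumption~(6) summed over $i$. For (\ref{eq:condbcK2}), $x\mapsto h(x)^\top c(x)h(x)$ is bounded on compacta because $h$ is bounded (assumption~(1)) and $c$ is affine, while $x\mapsto\int(w-\log(w+1))K(x,\dd z)=\sum_i x_i\int(w-\log(w+1))K^i(\dd z)$ is bounded on compacta since each $\int(w-\log(w+1))|K^i|(\dd z)$ is so by assumption~(5) (the integrand being nonnegative as $w>-1$) and $x_i$ is bounded on compacta.

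The step I expect to be the main obstacle is the quadratic growth condition (\ref{eq:growth}). Its two easy pieces are $|c(x)|\le C(1+|x|)$, from affineness, and $\int|z|^2\wK(x,\dd z)=\sum_i x_i\int|z|^2(w+1)K^i(\dd z)\le C(1+|x|^2)$, which is precisely assumption~(4) summed over $i$. The delicate piece is $|\wb(x)|^2\le C(1+|x|^2)$: the affine terms $b$ and $ch$ (with $h$ bounded) grow linearly, but the jump drift $\int z\, w(x,z)K(x,\dd z)=\sum_i x_i\int z\, w(x,z)K^i(\dd z)$ must be controlled by hand. Here I would split each integral at $|z|=1$. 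Near the origin the finiteness and local boundedness come from assumption~(2), and it is essential that this be assumption~(2) rather than assumption~(3): for $|z|\le1$ one has $|z|^2\wedge|z|=|z|^2$, so the $(w+1)$-weighted second moment alone does not bound the first moment $\int_{\{|z|\le1\}}|z|\,|w|\,|K^i|(\dd z)$. For the large jumps one writes $|w|\le(w+1)+1$ and bounds $\int_{\{|z|>1\}}|z|(w+1)|K^i|(\dd z)$ against the second moment furnished by assumption~(4), together with the standing $\int_{\{|z|>1\}}|z|\,|K^i|(\dd z)<\infty$. Assembling these component estimates, and tracking carefully how the $|x_i|$-weights in assumptions~(4) and~(6) interact with the affine expansion, is the real content of the argument and the place where the bookkeeping must be done with care. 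Once (\ref{eq:growth}) and (\ref{eq:condbcK2}) are in place, Theorem~\ref{th:expmart} applies directly and yields both asserted conclusions.
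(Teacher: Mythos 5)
Your proposal takes exactly the paper's route: the paper's entire proof of this proposition reads ``This is a reformulation of Theorem~\ref{th:expmart} for the affine martingale problem. One has to check conditions (\ref{eq:condbcK}), (\ref{eq:condgrowK}), (\ref{eq:condcontinu}), (\ref{eq:condmoregrowK}), (\ref{eq:growth}) and (\ref{eq:condbcK2}), which is left to the reader.'' You correctly identify the same reduction and moreover carry out the condition-by-condition verification that the paper omits, matching each listed assumption to the corresponding hypothesis of Theorem~\ref{th:expmart}.
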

\begin{proof}
This is a reformulation of Theorem~\ref{th:expmart} for the affine
martingale problem. One has to check conditions
(\ref{eq:condbcK}), (\ref{eq:condgrowK}), (\ref{eq:condcontinu}),
 (\ref{eq:condmoregrowK}), (\ref{eq:growth}) and
(\ref{eq:condbcK2}), which is left to the reader.
\end{proof}
Using the above proposition we validate the affine transform
formula under existence of the solutions to the Riccati equations
in the following theorem, which is the first main result of the
paper. The imposed assumptions are in the same spirit as
\cite[Theorem~5.1]{kallkarb08}.
\begin{theorem}\label{th:ricexists}
Let $X$ be an affine jump-diffusion  with differential
characteristics $(b(X),c(X),K(X,\dd z))$ given by (\ref{eq:bcK})
on $(D_E(0,\infty],\mathcal{F}^X,(\mathcal{F}^X_{t+}),\PP)$. Let
$u\in\R^p$,  $T>0$ and suppose $\psi_0\in C^1([0,T],\R)$ and
$\psi\in C^1([0,T],\R^p)$ solve the system of generalized Riccati
equations given by (\ref{eq:riccati}) (with $u_0:=0$). Under the
assumptions
\begin{enumerate}
\item $\sup_{t\leq T}\int |z|^2 e^{\psi(t)^\top z} |K^i|(\dd z)<\infty$, for $i=0,\ldots,p$,
\item $t\mapsto\int_{\{|z|>1\}} |z|e^{\psi(t)^\top z}|K^i|(\dd z)$
is continuous for all $i=0,\ldots,p$,
\item $\E\exp(\psi(T)^\top X_0)<\infty$,
\end{enumerate}
it holds that
\[
\E(\exp(u^\top
X_T)|\mathcal{F}^X_{t+})=\exp(\psi_0(T-t)+\psi(T-t)^\top
X_t),\mbox{ for all $t\leq T$}.
\]
\end{theorem}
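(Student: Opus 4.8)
The plan is to prove that the process
$N_t:=\exp(\psi_0(T-t)+\psi(T-t)^\top X_t)$, $t\in[0,T]$, is a $\PP$-martingale. Since the initial conditions in (\ref{eq:riccati}) (with $u_0:=0$) give $\psi_0(0)=0$ and $\psi(0)=u$, we have $N_T=\exp(u^\top X_T)$, so the martingale property together with right-continuity yields $\E(\exp(u^\top X_T)\mid\mathcal{F}^X_{t+})=N_t$, which is exactly the assertion.

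First I would apply It\^o's formula to $N_t=f(t,X_t)$, with $f(t,x)=\exp(\psi_0(T-t)+\psi(T-t)^\top x)$, using the decomposition (\ref{eq:decompx}) of the special semimartingale $X$. As in the computation leading to (\ref{eq:ftXt}) one obtains
\[
\frac{\dd N_t}{N_{t-}}=\psi(T-t)^\top\,\dd X^c_t+\int_F(e^{\psi(T-t)^\top z}-1)(\mu^X-\nu^X)(\dd t,\dd z)+I(t,X_t)\,\dd t,
\]
where $I(t,x)$ gathers the drift terms. Inserting the affine form (\ref{eq:bcK}) of $(b,c,K)$ and matching the $x$-free part with the equation for $\psi_0$ and the coefficient of each $x_i$ with the equation for $\psi_i$ in (\ref{eq:riccati})--(\ref{eq:Ri}) shows that $I(t,x)\equiv0$. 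Hence, writing $H_t=\psi(T-t)$ and $W(t,z)=e^{\psi(T-t)^\top z}-1$, the process $N$ is the nonnegative local martingale $N=N_0\,L$ with $L=\mathcal{E}(H\cdot X^c+W\ast(\mu^X-\nu^X))$; in particular $N$ is a supermartingale, assumptions (1)--(2) guarantee that all the integrals are well defined, and (3) gives $\E N_0<\infty$.

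The crux is to upgrade $N$ from a local to a true martingale, and here I would invoke Proposition~\ref{prop:Lismart}. The obstacle is that the integrands $H_t$ and $W(t,z)$ depend on $t$ through $\psi(T-t)$, whereas Proposition~\ref{prop:Lismart} is stated for the time-homogeneous, state-dependent integrands $h(X_t)$, $w(X_t,z)$. I would remove this mismatch by homogenising time: the pair $\bar X_t=(T-t,X_t)$ is again an affine jump-diffusion, now on the closed state space $\R\times E$, with affine characteristics (drift $(-1,b(x))$, the block-diagonal diffusion with $c(x)$ in the $X$-block, and jump measure $K(x,\dd z)$ acting only on the $X$-coordinates); its martingale problem inherits well-posedness from that of $X$ because the clock evolves deterministically. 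Taking $h(s,x)=\psi(s)$ and $w(s,x,z)=e^{\psi(s)^\top z}-1$ (extended to a bounded continuous function for $s<0$ by freezing at $\psi(0)=u$) gives $h(\bar X_t)=\psi(T-t)$ and $w(\bar X_t,z)=e^{\psi(T-t)^\top z}-1$, so the time-dependent integrands become genuine functions of the augmented state and $L$ takes precisely the form in Proposition~\ref{prop:Lismart}.

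It then remains to verify hypotheses (1)--(6) of Proposition~\ref{prop:Lismart} for $\bar X$, $h$ and $w$; this is the technical heart and the step I expect to be most laborious. Since $\psi\in C^1([0,T])$ is bounded, $h$ is bounded and continuous, and the continuity requirements follow from continuity of $\psi$ together with assumption (2); the growth bound (4) and the $L\log L$-type bound (6) follow from the uniform control $\sup_{t\le T}\int|z|^2e^{\psi(t)^\top z}|K^i|(\dd z)<\infty$ in assumption (1), where for (6) one uses the freedom to pick the exponent $q<2$ so that $|z|^q\log|z|\le C|z|^2$ for large $|z|$; the remaining boundedness-on-compacta conditions reduce likewise to (1)--(2) via compactness of $[0,T]$. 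Proposition~\ref{prop:Lismart} then yields that $L$, and hence $N=N_0L$, is a true martingale on $[0,T]$, which, as explained in the first paragraph, is the desired affine transform formula.
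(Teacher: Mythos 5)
Your proposal is correct and follows essentially the same route as the paper: Itô's formula plus the Riccati equations identify $N=N_0\,\mathcal{E}(H\cdot X^c+W\ast(\mu^X-\nu^X))$, and the martingale property is obtained by augmenting the state with the (deterministic) time coordinate so that Proposition~\ref{prop:Lismart} applies to the time-homogenized affine jump-diffusion. The paper uses $Y_t=(X_t,t)$ on $E\times[0,T]$ rather than your $(T-t,X_t)$, and leaves the verification of the hypotheses of Proposition~\ref{prop:Lismart} to the reader, which you carry out in slightly more detail; these are cosmetic differences.
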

\begin{proof}
To prove Theorem~\ref{th:ricexists} it suffices to show that
$f(t,X_t)$ given by $f(t,X_t)=\exp(\psi_0(T-t)+\psi(T-t)^\top X_t$
is an $((\mathcal{F}^X_{t+}),\PP)$-martingale on $[0,T]$, since
$f(T,X_T)=\exp(u^\top X_T)$ in view of the initial condition of
$(\psi_0,\psi)$. We restrict time to $[0,T]$. We have
(\ref{eq:ftXt}) with $I(t,X_t)=0$, since $(\psi_0,\psi)$ satisfy
(\ref{eq:riccati}). Hence $M_t:=f(t,X_t)$ satisfies
\[
M=M_0\,\mathcal{E}(\psi(T-t)\cdot X^c+(e^{\psi(T-t)^\top z}-1)\ast
(\mu^X-\nu^X)).
\]
Write $Y=(X_t,t)$ and note $Y$ is an affine jump-diffusion with
state space $E\times[0,T]$.
We define $h(x,t)=\psi(T-t)$ and $w(x,t,z)=e^{\psi(T-t)^\top
z}-1$. Write $H=h(Y)$, $W(t,z)=w(Y_t,z)$, then we deduce that
\[
L:=\mathcal{E}(H\cdot Y^c+W\ast(\mu^{Y}-\nu^{Y}))
\]
is an $((\mathcal{F}^X_{t+}),\PP)$-martingale by applying
Proposition~\ref{prop:Lismart} to the affine jump-diffusion $Y$.
One easily verifies that the assumptions in that proposition are
met. Since $M_t=M_0L_t$ and $\E M_0<\infty$, it follows that $M$
is an $((\mathcal{F}^X_{t+}),\PP)$-martingale on $[0,T]$, as we
needed to show.
\end{proof}

Theorem~\ref{th:maintheorem} below is our second main result. We
establish the full-range of validity of the affine transform
formula under all finite exponential moments for the tails of the
jump-measures $K^i$, for affine jump-diffusion with a general
closed convex state space, extending \cite[Theorem~3.3]{fm09}. The
proof is divided over the next two sections. We use the results
and notation from \cite[Lemma~2.3 and Lemma~A.2]{fm09}, which we
state as a proposition for ease of reference.
\begin{prop}\label{prop:filipmayer}
Suppose
\begin{align}\label{eq:exponentialmom}
\int_{\{|z|>1\}} e^{k^\top z}|K^i|(\dd z)<\infty,\mbox{ for all
}k\in\R^p, i=0,\ldots,p.
\end{align}
Let $\K$ be a placeholder for either $\R$ or $\C$. It holds that
\begin{enumerate}[(i)]
\item\label{item:fm1} For all $ u\in\K^p$ there exists an ``explosion-time''
$t_\infty( u)>0$ such that there exists a unique solution
$(\psi_0(\cdot, u),\psi(\cdot, u)):[0,t_\infty( u))\rightarrow
\K\times\K^p$ to the system of Riccati equations
(\ref{eq:riccati}), where either $t_\infty( u)=\infty$ or
$\lim_{t\uparrow t_\infty( u)}\|\psi(t, u)\|=\infty$. In
particular $t_\infty(0)=\infty$.
\item\label{item:fm2}  The set
\[
D_\K:=\{ (t,u)\in [0,\infty)\times\K^p:t<t_\infty( u)\},
\]
is open in $[0,\infty)\times \K^p$ and the $\psi_i$ are analytic
on $D_\K$. In addition, for all $t\geq0$
\[
D_\K(t):=\{ u\in\C^p:(t,u)\in D_\K\}
\]
is an open neighborhood of $0$ and $D_\K(t_2)\subset D_\K(t_1)$
for $0\leq t_1\leq t_2$.
\item\label{item:fm3} If $O\subset\R^p$ is an open set and $\nu$ is a bounded measure such that we have $\int\exp( u^\top
x)\dd\nu(x)<\infty$ for all $ u\in O$, then $ u\mapsto\int\exp(
u^\top x)\dd\nu(x)$ is analytic on the open strip
\[
S(O):=\{z\in\C^p:\Re z\in O\}.
\]
\end{enumerate}
\end{prop}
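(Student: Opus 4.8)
The plan is to handle the three parts in the order (iii), (i), (ii), since (iii) is what makes the vector field driving the Riccati system analytic, and parts (i)--(ii) are then consequences of the standard theory of ODEs with analytic right-hand sides. The unifying observation I would isolate first is that, under the exponential moment condition (\ref{eq:exponentialmom}), each $R_i$ of (\ref{eq:Ri}) is well-defined and analytic on all of $\C^p$ (real-analytic on $\R^p$ in the case $\K=\R$): the polynomial part $y\mapsto y^\top a^i+\half y^\top A^i y$ is entire; the near-origin piece $\int_{\{|z|\le 1\}}(e^{y^\top z}-1-y^\top z)K^i(\dd z)$ is dominated by $\int(|z|^2\wedge|z|)|K^i|(\dd z)<\infty$ and is analytic by differentiation under the integral sign; and the tail $\int_{\{|z|>1\}}(e^{y^\top z}-1-y^\top z)K^i(\dd z)$ is analytic by part (iii) applied with $O=\R^p$ to the bounded measure $1_{\{|z|>1\}}|K^i|$.

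For part (iii), I would fix $u_0\in S(O)$, so $\Re u_0\in O$. Since $|e^{z^\top x}|=e^{(\Re z)^\top x}$, the integral $\int\exp(z^\top x)\dd\nu(x)$ converges absolutely throughout $S(O)$, so it remains only to prove holomorphy. I would argue coordinate-by-coordinate: with the other coordinates frozen, $z_j\mapsto\int\exp(z^\top x)\dd\nu(x)$ is holomorphic on the corresponding one-dimensional strip, which one verifies by differentiating under the integral sign, the dominating function $|x_j|e^{(\Re z)^\top x}$ being $\nu$-integrable because $\Re z\pm\varepsilon e_j\in O$ for small $\varepsilon>0$. Joint analyticity then follows from Hartogs' theorem on separate holomorphy.

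Granting that the $R_i$ are analytic, part (i) is the Cauchy--Lipschitz theory. Since each $R_i$ is analytic and hence locally Lipschitz, the system (\ref{eq:riccati}) has a unique maximal solution on a half-open interval $[0,t_\infty(u))$, with the usual blow-up alternative (either $t_\infty(u)=\infty$ or $\|\psi(t,u)\|\to\infty$). I would point out that the equations for $(\psi_1,\dots,\psi_p)$ form a closed autonomous subsystem, while $\psi_0$ is recovered by quadrature from $\dot\psi_0=R_0(\psi)$, so no explosion of $\psi_0$ can precede one of $\psi$ and the blow-up is governed by $\psi$ alone. The claim $t_\infty(0)=\infty$ is immediate, since $R_i(0)=0$ for every $i$, so $\psi\equiv 0$ is the unique global solution started at $u=0$.

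Part (ii) is the dependence-on-data theory for the same analytic system, and I expect it to be the main obstacle alongside (iii). Openness of $D_\K$ and joint analyticity of the $\psi_i$ on $D_\K$ follow from the analytic flow theorem: for an analytic vector field the maximal flow is defined on an open set and depends analytically on $(t,u)$ jointly (over $\R$ in the real case, over $\C$ in the complex case). The subtle point is promoting the classical $C^1$ dependence of solutions on initial data to genuine analyticity, together with the joint-analyticity and openness statements; these are standard but are the only nonmechanical inputs. That $D_\K(t)$ is a neighborhood of $0$ is the lower semicontinuity of $u\mapsto t_\infty(u)$ at $u=0$, where $t_\infty(0)=\infty$, and the nesting $D_\K(t_2)\subset D_\K(t_1)$ for $0\le t_1\le t_2$ is immediate from the definition, since existence up to $t_2$ forces existence up to $t_1$. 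As the statement merely repackages \cite[Lemma~2.3 and Lemma~A.2]{fm09}, the argument can in fact be condensed to citing those results once analyticity of the $R_i$ under (\ref{eq:exponentialmom}) is recorded.
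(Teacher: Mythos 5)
Your proposal is correct in outline, but there is nothing in the paper to match it against: the paper offers no proof of this proposition at all. It is explicitly introduced as a restatement of \cite[Lemma~2.3 and Lemma~A.2]{fm09} ``for ease of reference,'' so the paper's entire argument is the citation, exactly as you anticipate in your closing sentence. What you have written is essentially a reconstruction of the proof in \cite{fm09}, and the route you take is the standard one: part~(iii) by differentiation under the integral (or Morera plus Fubini) together with Hartogs; analyticity of the $R_i$ on all of $\C^p$ by splitting the jump integral at $|z|=1$, controlling the near-origin piece by $\int(|z|^2\wedge|z|)|K^i|(\dd z)<\infty$ and the tail by (iii) applied to the bounded measure $1_{\{|z|>1\}}|K^i|$; part~(i) by Picard--Lindel\"of with the blow-up alternative, using that $(\psi_1,\dots,\psi_p)$ is a closed autonomous subsystem and $\psi_0$ is a quadrature; part~(ii) by analytic dependence of the flow on $(t,u)$. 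Two small points you gloss over, both routine: in (iii) the dominating function must be locally uniform in $z$, which one gets by bounding $\sup e^{(\Re z)^\top x}$ over a small box by the finite sum of the values at its vertices (convexity of $w\mapsto e^{w^\top x}$); and the blow-up alternative must be invoked in the form ``the solution leaves every compact set permanently'' to obtain $\lim_{t\uparrow t_\infty(u)}\|\psi(t,u)\|=\infty$ rather than merely a $\limsup$ --- this is available here precisely because the $R_i$ are entire, so the domain of the vector field is all of $\K^p$. With those remarks your argument is complete and faithful to the cited source.
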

\begin{theorem}\label{th:maintheorem}
Suppose $E\subset\R^p$ is closed convex with non-empty interior
and let $X$ be an affine jump-diffusion on
$(D_E(0,\infty],\mathcal{F}^X,(\mathcal{F}^X_{t+}),\PP)$ with
differential characteristics $(b(X),c(X),K(X,\dd z))$ given by
(\ref{eq:bcK}). Assume (\ref{eq:exponentialmom}) and let the
notation of Proposition~\ref{prop:filipmayer} be in force. Then
for $t>0$ it holds that
\begin{enumerate}[(i)]

\item\label{twee}  $D_\R(t)=M(t)$, where
\[
M(t)=\{ u\in\R^p:\E_x(\exp( u^\top X_t))<\infty\mbox{ for all
}x\in E \}.
\]
\item\label{een} $S(D_\R(t))\subset D_\C(t)$.
\item\label{vier}  The affine transform formula (\ref{eq:char}) holds for all $ u\in
S(D_\R(t))$.
\item\label{drie}  $D_\R(t)$ and $D_\R$ are
convex sets.
\item\label{vijf} $M(t)\subset M(s)$ for $0\leq s\leq t$.
\end{enumerate}
\end{theorem}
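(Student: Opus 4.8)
The backbone is part~(\ref{twee}), the identity $D_\R(t)=M(t)$; once this is in hand, parts~(\ref{drie}) and~(\ref{vijf}) follow quickly and the complex parts~(\ref{een}),(\ref{vier}) are obtained by analytic continuation. The plan is to split (\ref{twee}) into two inclusions. For $D_\R(t)\subseteq M(t)$ I would, for each fixed $x\in E$ and each $u\in D_\R(t)$, apply Theorem~\ref{th:ricexists} with $\PP=\PP_x$ and horizon $T=t$, reading off the identity at the initial time. Its hypotheses (1)--(3) follow from the global exponential-moment assumption (\ref{eq:exponentialmom}) together with the continuity of $\psi(\cdot,u)$ on the compact interval $[0,t]$ (so $\psi(s,u)$ ranges over a compact set and the polynomial weights $|z|^2,|z|$ are absorbed by the exponential), while hypothesis (3) is trivial since under $\PP_x$ the initial value $X_0=x$ is deterministic. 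This yields $\E_x\exp(u^\top X_t)=\exp(\psi_0(t,u)+\psi(t,u)^\top x)<\infty$, i.e.\ $u\in M(t)$.

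The reverse inclusion $M(t)\subseteq D_\R(t)$ is the main obstacle. I would argue by contradiction: suppose $u\in M(t)$ but $t\geq t_\infty(u)$. On $[0,t_\infty(u))$ the easy inclusion already gives $\E_x\exp(u^\top X_s)=\exp(\psi_0(s,u)+\psi(s,u)^\top x)$ for every $x$, while $\|\psi(s,u)\|\to\infty$ as $s\uparrow t_\infty(u)$. To turn this blow-up of $\psi$ into a blow-up of the moment, despite the possibly compensating $x$-independent term $\psi_0$, I would exploit that $E$ has non-empty interior and compare the representation at two interior starting points $x_0,x_1$ chosen along the limiting direction of $\psi(s,u)/\|\psi(s,u)\|$, so that $\psi_0$ cancels in the difference and at least one moment is forced to explode. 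The contradiction with finiteness at time $t$ is then extracted from the uniformly integrable martingale $N_s=\E_x[\exp(u^\top X_t)\mid\mathcal{F}^X_{s+}]=\E_{X_s}\exp(u^\top X_{t-s})$ (Markov property), which on the relevant range of $s$ equals $\exp(\psi_0(t-s,u)+\psi(t-s,u)^\top X_s)$: a Fatou estimate on $\E_x N_s=\E_x\exp(u^\top X_t)$ (or the a.s.\ convergence of $N_s$) is incompatible with a deterministic affine exponent whose linear part explodes. The delicate points I expect to fight with are the non-degeneracy of $\mathrm{Law}(X_s)$ (it must not sit in a hyperplane orthogonal to the blow-up direction) and, in the boundary case $t=t_\infty(u)$, passing the explosion through the jump at time $t$ by relating $X_{t-}$ to $X_t$ via the exponential-moment bound (\ref{eq:exponentialmom}).

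Granting (\ref{twee}), parts~(\ref{vijf}) and the slice case of~(\ref{drie}) are short. For (\ref{vijf}) I would combine $M(t)=D_\R(t)$ with the nesting $D_\R(t)\subseteq D_\R(s)$ for $s\leq t$ from Proposition~\ref{prop:filipmayer}(\ref{item:fm2}). For convexity of the slice $D_\R(t)$ I would use H\"{o}lder's inequality: if $u_1,u_2\in M(t)$ and $\theta\in[0,1]$, then $\E_x\exp((\theta u_1+(1-\theta)u_2)^\top X_t)\leq(\E_x\exp(u_1^\top X_t))^\theta(\E_x\exp(u_2^\top X_t))^{1-\theta}<\infty$, so $M(t)=D_\R(t)$ is convex. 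Convexity of the full time--parameter set $D_\R$ I would approach through the flow (semigroup) property of the Riccati system $\psi(s_1+s_2,u)=\psi(s_1,\psi(s_2,u))$; this is the more delicate bookkeeping and I would treat it last, as it is not the crux of the theorem.

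Finally, for the complex statements~(\ref{een}) and~(\ref{vier}) I would argue on the tube $S(D_\R(t))$, which is connected because $D_\R(t)$ is convex by~(\ref{drie}). The map $u\mapsto\E_x\exp(u^\top X_t)$ is finite there, since $|\exp(u^\top X_t)|=\exp((\Re u)^\top X_t)$ with $\Re u\in M(t)$, and by Proposition~\ref{prop:filipmayer}(\ref{item:fm3}) (applied with $O=M(t)=D_\R(t)$ open) it is analytic on $S(D_\R(t))$; meanwhile $\psi(\cdot,u)$ is analytic on the open set $D_\C$ by Proposition~\ref{prop:filipmayer}(\ref{item:fm1})--(\ref{item:fm2}). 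I would then run an open--closed continuation argument: let $\mathcal{O}$ be the set of $u\in S(D_\R(t))$ for which $(t,u)\in D_\C$ and the affine transform formula (\ref{eq:char}) holds. It contains the real slice $D_\R(t)$ (by~(\ref{twee}) and Theorem~\ref{th:ricexists}) and is open (since $D_\C$ is open and both sides are analytic). The crux is closedness: on $\mathcal{O}$ one has $\Re(\psi_0(t,u)+\psi(t,u)^\top x)=\log|\E_x\exp(u^\top X_t)|$, whose right-hand side is bounded above locally uniformly for $x$ in compacta of $E^\circ$ (it is dominated by $\log\E_x\exp((\Re u)^\top X_t)$, locally bounded on the tube). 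If $\psi(t,u_n)$ exploded along $u_n\to u_*\in S(D_\R(t))$, then $\Re(\psi(t,u_n)^\top x)$ would take arbitrarily large positive values as $x$ ranges over a small ball in $E^\circ$, contradicting this upper bound. Hence $\psi$ does not explode on the tube, which is~(\ref{een}); and since the two analytic functions agree on the totally real, full-dimensional slice $D_\R(t)$, the identity theorem on the connected tube yields~(\ref{vier}).
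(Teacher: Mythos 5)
Your overall architecture matches the paper's (easy inclusion via Theorem~\ref{th:ricexists}, hard inclusion by contradiction, H\"older for convexity, analytic continuation on the tube), but the two steps that carry all the difficulty are not actually carried out, and the mechanisms you propose for them would fail. For $M(t)\subset D_\R(t)$: your plan rests on the uniformly integrable martingale $N_s=\exp(\psi_0(t-s,u)+\psi(t-s,u)^\top X_s)$ and on the law of $X_s$ charging the blow-up direction of $\psi$. The paper does not need any such non-degeneracy, and for good reason: it cannot be guaranteed for a general affine jump-diffusion. Instead the paper first proves the representation $\psi_0(t,u)+\psi(t,u)^\top x=u^\top\E_xX_t+\int_0^t k(\E_xX_{t-s},\psi(s,u))\dd s$ with $k\geq0$ (Proposition~\ref{prop:phipluspsi}), which yields a deterministic lower bound on $\psi_0$ and lets it take an infimum of $\psi(t_n,u)^\top y$ over a fixed spatial ball centred at the starting point $x_0$, using only right-continuity of $X$ at time $0$ (Lemma~\ref{lem:Tistinfty}). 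More seriously, your sketch essentially only treats $t=t_\infty(u)$; for $t>t_\infty(u)$ the paper needs the full four-step machinery of Proposition~\ref{prop:Expinfty}, which tracks $\psi(\cdot,\lambda_n u)$ as $\lambda_n\uparrow1$ and propagates the blow-up from $t_\infty(u)$ to $t$ in increments of length $\varepsilon$ by shifting the initial point backwards along the deterministic flow $f(-s,x)$ — again via Proposition~\ref{prop:phipluspsi}. Your Jensen step recovers the finiteness of $\E_x\exp(\lambda u^\top X_t)$ but nothing in your outline replaces this propagation argument.

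The closedness step of your open--closed continuation also has a hole. An upper bound on $\Re\psi_0(t,u_n)+\Re\psi(t,u_n)^\top x$, uniform over a small ball of $x$, only forbids the exponent from blowing up \emph{upwards}; it does not forbid $\|\psi(t,u_n)\|\to\infty$ with the affine exponent tending to $-\infty$ (i.e.\ $\E_x\exp(u_n^\top X_t)\to0$), nor does it say anything about $\Im\psi(t,u_n)$. Ruling out exactly this vanishing scenario is the crux of the complex case — the paper stresses that a general affine jump-diffusion need not be infinitely divisible, so $\E_x\exp(u^\top X_t)=0$ is a priori possible — and occupies Steps~2--6 of Proposition~\ref{prop:complexexp}: one shows that vanishing at the putative boundary point would force $x\mapsto\E_x\exp(u_0^\top X_T)$ to be discontinuous (via an analyticity-along-the-flow argument and the semigroup property of $\psi$), and then separately proves continuity using Lemma~\ref{lem:continx}, weak convergence and Skorohod representation. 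None of this is reachable from the modulus bound you propose. The remaining items — $D_\R(t)\subset M(t)$, convexity of the slices, and the monotonicity (\ref{vijf}) — are handled correctly and as in the paper.
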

\begin{proof}
Theorem~\ref{th:ricexists} yields $D_\R(t)\subset M(t)$. The proof
of $D_\R(t)\supset M(t)$ is the content of
Section~\ref{sec:realexp}, while Section~\ref{sec:complexexp} is
devoted to the proof of (\ref{een}) and (\ref{vier}).
Assertions~(\ref{drie}) and (\ref{vijf}) follow from (\ref{twee}).
\end{proof}
\section{Full range of validity for real exponentials}\label{sec:realexp}

Let $T>0$. In order to prove $M(T)\subset D_\R(T)$, we show that
$\psi(T, u)$ explodes when $ u\in D_\R(T)$ approaches the boundary
$\partial (D_\R(T))$. This is not immediate as the following
example demonstrates.
\begin{example}
Consider the Riccati equation $\dot{x}=x^2$. Its solution $x$ with
initial condition $ u\in\C$ is given by $x(t, u)= u/(1- u t)$ and
we have $t_\infty( u)= u^{-1}$ for $ u\in\R_{>0}$ and $t_\infty(
u)=\infty$ otherwise. Hence $D_\C(T)=\{ u\in\C: u\not\in
[T^{-1},\infty) \}$ and $\partial D_\C(T)=[T^{-1},\infty)$.
Obviously $x(T, u)$ does not explode if $ u\in D_\C(T)$ tends to $
u_0\in(T^{-1},\infty)$. If we take real and imaginary part, then
we obtain a $2$-dimensional system of Riccati equations given by
\begin{align*}
\dot{x_1}&= x_1^2 - x_2^2\\
\dot{x_2}&=2x_1x_2.
\end{align*}
In this case $D_\R(T)=\{ u\in\R^2: u\not\in [T^{-1},\infty) \}$
and again $x(t, u)$ does not explode if $ u\in D_\R(T)$ tends to $
u_0\in(T^{-1},\infty)$. Note that the Riccati equations are of the
form (\ref{eq:riccati}) (excluding the equation for $\psi_0$) with
\[
A^1=
  \begin{pmatrix}
    1 & 0 \\
    0 & -1
  \end{pmatrix},\,
A^2=
  \begin{pmatrix}
    0 & 1 \\
    1 & 0
  \end{pmatrix},\,a=0.
\]
However, they are not related to an affine diffusion where the
state space has non-empty interior. Indeed, the corresponding
diffusion matrix would be
\[
c(x)=\begin{pmatrix}
    x_1 & x_2 \\
    x_2 & -x_1
  \end{pmatrix},
\]
which is positive semi-definite if and only if $x=0$.
\end{example}
\medskip

In Lemma~\ref{prop:phipluspsi} below we derive a formula that
relates solutions to Riccati equations to the expectation of the
corresponding affine diffusion. This will turn out to be most
useful in Proposition~\ref{prop:Expinfty} to derive that
$M(T)\subset D_\R(T)$, which proves
Theorem~\ref{th:maintheorem}~(\ref{twee}).
\begin{prop}\label{prop:phipluspsi}
Consider the situation of Theorem~\ref{th:maintheorem}. Define the
non-negative function $k:E\times\R^p\rightarrow\R$ by
\begin{equation}\label{eq:kxy}
k(x,y)=\half y^\top c(x) y + \int (e^{y^\top z}-1-y^\top z)K(x,\dd
z),
\end{equation}
for $x\in E$ and $y\in\R^p$. Then for all $x\in E$, $u\in\R^p$,
$t<t_\infty(u)$ it holds that
\begin{equation}\label{eq:mooieformule}
\begin{split}
\psi_0(t,u)+\psi(t,u)^\top x &=  u^\top \E_x X_t +\int_0^t k(\E_x
X_{t-s},\psi(s,u))\dd s,
\end{split}
\end{equation}
and $\E_x X_t$ solves the linear ODE
\begin{align}\label{eq:linODE}
\dot x = b(x),\quad x(0)=x.
\end{align}
\end{prop}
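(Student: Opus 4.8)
The plan is to reduce everything to the function $m(\tau):=\E_x X_\tau$. First I would show that $m$ is well-defined and finite and solves the stated linear ODE, and then derive (\ref{eq:mooieformule}) by applying the fundamental theorem of calculus to $s\mapsto \psi_0(s,u)+\psi(s,u)^\top m(t-s)$, using the Riccati equations together with the affine structure.

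To set up $m$, I would first argue that $X_\tau$ has finite moments of all orders for every $\tau\geq 0$. By Proposition~\ref{prop:filipmayer}(\ref{item:fm2}) the set $D_\R(\tau)$ is an open neighbourhood of the origin, and by Theorem~\ref{th:ricexists} (whose hypotheses follow from (\ref{eq:exponentialmom}) after dominating polynomials by exponentials) we have $D_\R(\tau)\subset M(\tau)$; hence $\E_x\exp(v^\top X_\tau)<\infty$ for all $v$ in a neighbourhood of $0$, which forces all polynomial moments of $X_\tau$ to be finite. In particular $m(\tau)=\E_x X_\tau$ exists, and since $E$ is closed convex, $m(\tau)\in E$, so that $k(m(\tau),\cdot)$ is a well-defined non-negative quantity (here $c(x)\succeq0$ and $K(x,\dd z)\geq0$ for $x\in E$, and $e^{w}-1-w\geq0$). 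To obtain the ODE I would use the decomposition (\ref{eq:decompx}): the affinity of $c$ and $K$ together with $\int|z|^2|K^i|(\dd z)<\infty$ (a consequence of (\ref{eq:exponentialmom}) and the standing integrability of $K^i$) show that $X^c$ and $z\ast(\mu^X-\nu^X)$ are genuine ($L^2$-)martingales, so they vanish under $\E_x$. Then Fubini (justified by $\E_x\int_0^\tau|b(X_s)|\,\dd s<\infty$) together with the affinity of $b$, which gives $\E_x b(X_s)=b(\E_x X_s)$, yields $m(\tau)=x+\int_0^\tau b(m(s))\,\dd s$, i.e.\ $\dot m=b(m)$ with $m(0)=x$.

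For the identity, fix $x$, $u$ and $t<t_\infty(u)$ and put $G(s)=\psi_0(s,u)+\psi(s,u)^\top m(t-s)$ on $[0,t]$. Since $\psi_0(\cdot,u),\psi(\cdot,u)\in C^1([0,t])$ (as $t<t_\infty(u)$) and $m\in C^1$, the map $G$ is $C^1$ with $G(0)=u^\top\E_x X_t$ (using $\psi_0(0,u)=0$ and $\psi(0,u)=u$) and $G(t)=\psi_0(t,u)+\psi(t,u)^\top x$. Differentiating and inserting $\dot\psi_i=R_i(\psi)$ and $\dot m=b(m)$ gives, with the convention $m_0\equiv1$,
\[
G'(s)=\sum_{i=0}^p R_i(\psi(s,u))\,m_i(t-s)-\psi(s,u)^\top b(m(t-s)).
\]
The affine structure (\ref{eq:bcK}) furnishes the algebraic identity $\sum_{i=0}^p x_i R_i(y)=y^\top b(x)+k(x,y)$ for $x_0=1$, obtained by regrouping the $a^i$-, $A^i$- and $K^i$-contributions into $b$, $c$ and $K$; thus $G'(s)=k(m(t-s),\psi(s,u))$. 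Integrating from $0$ to $t$ then gives $G(t)-G(0)=\int_0^t k(\E_x X_{t-s},\psi(s,u))\,\dd s$, which is precisely (\ref{eq:mooieformule}).

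The hard part is the analytic input rather than the algebra: establishing finiteness of $\E_x X_\tau$ and, above all, upgrading the continuous and compensated-jump parts of (\ref{eq:decompx}) from local to true martingales, so that they drop out under $\E_x$. This is where the exponential-moment hypothesis (\ref{eq:exponentialmom}) and the induced bound $\int|z|^2|K^i|(\dd z)<\infty$ are essential and where I would take most care. Once the ODE for $m$ is in place and the integrand is seen to be finite (which is automatic, since $G'$ is continuous on the compact interval $[0,t]$), the formula (\ref{eq:mooieformule}) is a one-line consequence of the fundamental theorem of calculus.
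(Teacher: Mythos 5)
Your proof is correct and rests on the same two pillars as the paper's: the linear ODE $\dot m=b(m)$ for $m(\tau)=\E_xX_\tau$ (obtained, as in the paper's Lemma~\ref{lem:sup}, from the second-moment bound that upgrades $X^c$ and $z\ast(\mu^X-\nu^X)$ to true martingales, using $\int|z|^2|K^i|(\dd z)<\infty$), together with the algebraic regrouping $\sum_{i=0}^p x_iR_i(y)=y^\top b(x)+k(x,y)$ and the convexity of $E$ to keep $m(\tau)\in E$. The only difference is presentational: you differentiate $s\mapsto\psi_0(s,u)+\psi(s,u)^\top m(t-s)$ and apply the fundamental theorem of calculus, whereas the paper reaches the identical identity via variation of constants for the inhomogeneous linear ODE and the adjoint flow $e^{A^\top t}(1,x)^\top=(1,f(t,x))^\top$.
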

\begin{proof}
Fix $u\in\R^p$ and write $\psi(\cdot)$ instead of $\psi(\cdot,u)$.
We can write the ODE for $(\psi_0,\psi)$ as an inhomogeneous
linear ODE, namely
\[
  \begin{pmatrix}
    \dot{\psi_0} \\
    \dot{\psi}
  \end{pmatrix}=
  A \begin{pmatrix}
   \psi_0 \\
   \psi
  \end{pmatrix}+g,\,\mbox{
with } A=\begin{pmatrix}
    0 & {a^0}^\top \\
    0 & a^\top
  \end{pmatrix},
\]
where we write $a$ for the $(p\times p)$-matrix with columns
$a^i$, $i=1,\ldots,p$ and $g=(g_0,g_1,\ldots,g_p)$ is the function
given by
\[
 g_i=
   \half\psi^\top A^{i} \psi+\int (e^{\psi^\top
z}-1-\psi^\top z)K^{i}(\dd z),\quad i=0,\ldots,p.
\]
By an application of a variation of constants, the solution can be
written as
\[
\begin{pmatrix}
   \psi_0(t) \\
   \psi(t)
  \end{pmatrix}=e^{At}\begin{pmatrix}
   0 \\
    u
  \end{pmatrix} + \int_0^t e^{A(t-s)}g(s)\dd s,
\]
which yields
\begin{equation}
\begin{split}\label{eq:phipluspsi} \psi_0(t)+\psi(t)^\top x
&=\begin{pmatrix}
   \psi_0(t) \\
   \psi(t)
  \end{pmatrix}^\top
  \begin{pmatrix}
    1 \\
    x
  \end{pmatrix}\\&=\begin{pmatrix}
   0 &
    u^\top
  \end{pmatrix}e^{A^\top t} \begin{pmatrix}
    1 \\
    x
  \end{pmatrix} + \int_0^t g(s)^\top e^{A^\top(t-s)}\begin{pmatrix}
    1 \\
    x
  \end{pmatrix}\dd s.
  \end{split}
\end{equation}
Write $f(t,x)$ for the solution to the linear
ODE~(\ref{eq:linODE}) with $f(0,x)=x$. Then we have
\[
\begin{pmatrix}
    y(t) \\
    z(t)
  \end{pmatrix}:=e^{A^\top t} \begin{pmatrix}
    1 \\
    x
  \end{pmatrix}= \begin{pmatrix}
    1 \\
    f(t,x)
  \end{pmatrix}.
\]
Indeed, since
\[
\begin{pmatrix}
    \dot{y} \\
    \dot{z}
  \end{pmatrix}=A^\top \begin{pmatrix}
    y \\
    z
  \end{pmatrix}=
  \begin{pmatrix}
    0  \\
    a^0 y + a z
  \end{pmatrix},
\]
it holds that $y=1$ and $\dot{z}=a z+a^0=b(z)$ with $z(0)=x$,
whence $z(t)=f(t,x)$. Noting that
\[
g^\top
\begin{pmatrix}
    1 \\
    x
  \end{pmatrix}=\half \psi^\top c(x)\psi+\int (e^{\psi^\top z}-1-\psi^\top z)K(x,\dd z),\,\mbox{ for all $x\in E$},
  \]
and $\E_x X_t\in E$ for all $x\in E$, $t\geq0$, by convexity of
$E$,  we obtain (\ref{eq:mooieformule}) from (\ref{eq:phipluspsi})
after we have shown that $\E_x X_t = f(t,x)$. The latter follows
from Lemma~\ref{lem:sup}, as it yields
\[
\E_x X_t=x+\E_x \int_0^t (a^0+ a X_s)\dd s= \int_0^t (a^0+ a \E_x
X_s)\dd s.
\]
\end{proof}
\medskip

In the following we make use of the fact that for $c_n\in\R^p$ it
holds that
\begin{align}\label{eq:limcn}
\lim_{n\rightarrow\infty}\|c_n\|=\infty \Rightarrow \exists
x\in\{-1,1\}^p,\varepsilon>0:\limsup_{n\rightarrow\infty}
\inf_{y\in B(x,\varepsilon)} c_n^\top y=\infty.
\end{align}
Indeed, if $\lim_{n\rightarrow\infty}\|c_n\|=\infty $, then there
exists a subsequence $c_{n_k}$ such that all components
$c_{n_k,i}$ are convergent in $[-\infty,\infty]$. In addition, one
of them converges to either $+\infty$ or $-\infty$. Define
$x\in\R^p$ by taking $x_i=-1$ if $c_{n_k,i}\rightarrow-\infty$ and
$x_i=1$ otherwise. Then obviously for $y\in B(x,\varepsilon)$ with
$0<\varepsilon<1$ we have
\[
\inf_{y\in B(x,\varepsilon)}c_{n_k}^\top
y\rightarrow\infty,\,\mbox{ as }k\rightarrow\infty.
\]
\begin{lemma}\label{lem:Tistinfty}
Consider the situation of Theorem~\ref{th:maintheorem}. Let $
u\in\R^p$ and suppose $T:=t_\infty( u)<\infty$. Then there exists
$x\in E$ such that $\E_x\exp( u^\top X_T)=\infty$.
\end{lemma}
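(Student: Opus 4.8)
The plan is to exploit the explosion of $\psi$ at $T$ together with Jensen's inequality and the Markov property, converting a blow-up of $\E_x\exp(u^\top X_t)$ for $t<T$ into infiniteness at the endpoint $t=T$. Since $T=t_\infty(u)<\infty$, part~(i) of Proposition~\ref{prop:filipmayer} gives $\lim_{t\uparrow T}\|\psi(t,u)\|=\infty$. I would pick any sequence $t_n\uparrow T$ and apply (\ref{eq:limcn}) to $c_n=\psi(t_n,u)$: after passing to a subsequence there are $x^*\in\{-1,1\}^p$ and $\varepsilon\in(0,1)$ with $\inf_{y\in B(x^*,\varepsilon)}\psi(t_n,u)^\top y\to\infty$. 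The reason for extracting a whole ball $B(x^*,\varepsilon)$ of directions, rather than a single one, is that it will let me fix a \emph{single} starting point $x_0$ that works for all $n$.

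For each $t<T$, Theorem~\ref{th:ricexists} applies (its moment hypotheses follow from (\ref{eq:exponentialmom}) because $\psi(\cdot,u)$ is bounded on $[0,t]$), so $\E_x\exp(u^\top X_t)=\exp(\psi_0(t,u)+\psi(t,u)^\top x)$ for all $x\in E$. Write $\Phi_n(x):=\psi_0(t_n,u)+\psi(t_n,u)^\top x$, an affine function of $x$. By Jensen's inequality and Proposition~\ref{prop:phipluspsi}, $\Phi_n(x)\geq u^\top\E_x X_{t_n}=u^\top f(t_n,x)$, where $f(t,x):=\E_x X_t$ solves the linear ODE (\ref{eq:linODE}); hence $\Phi_n$ is bounded below, uniformly in $n$, on compacta of $E$. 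Now fix $x^{(0)}\in E^\circ$ and $r>0$ so small that $U:=x^{(0)}+r\,B(x^*,\varepsilon)\subset E$. Since $\Phi_n$ is affine,
\[
\inf_{x\in U}\Phi_n(x)=\Phi_n(x^{(0)})+r\inf_{y\in B(x^*,\varepsilon)}\psi(t_n,u)^\top y\longrightarrow\infty,
\]
so $\Phi_n\to\infty$ uniformly on the open set $U$. In particular, for any fixed $x_0\in U$ we get $\E_{x_0}\exp(u^\top X_{t_n})\to\infty$.

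The final and most delicate step is to transfer this from the horizons $t_n<T$ to the horizon $T$ itself, which is not covered by Theorem~\ref{th:ricexists} precisely because $\psi(\cdot,u)$ explodes there. Here I would use the Markov property (Remark~\ref{remark}) and monotone convergence to write, for $\delta_n:=T-t_n$,
\[
\E_{x_0}\exp(u^\top X_T)=\E_{x_0}\!\big[\E_{X_{\delta_n}}\exp(u^\top X_{t_n})\big]=e^{\psi_0(t_n,u)}\,\E_{x_0}\exp\!\big(\psi(t_n,u)^\top X_{\delta_n}\big),
\]
the second equality again by Theorem~\ref{th:ricexists} on $[0,t_n]$ (valid in $[0,\infty]$ by monotone approximation of $\exp$). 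Jensen's inequality then gives
\[
\E_{x_0}\exp(u^\top X_T)\geq\exp\!\big(\psi_0(t_n,u)+\psi(t_n,u)^\top f(\delta_n,x_0)\big)=\exp\!\big(\Phi_n(f(\delta_n,x_0))\big).
\]
Since $f(\delta_n,x_0)\to f(0,x_0)=x_0\in U$ and $U$ is open, $f(\delta_n,x_0)\in U$ for all large $n$, whence $\Phi_n(f(\delta_n,x_0))\geq\inf_{x\in U}\Phi_n(x)\to\infty$. As the left-hand side is independent of $n$, this forces $\E_{x_0}\exp(u^\top X_T)=\infty$, which is the assertion.

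I expect the main obstacle to be exactly this transfer to the endpoint $t=T$: a direct Fatou argument along $t\uparrow T$ produces an inequality in the wrong direction, and one cannot evaluate $\psi$ at the explosion time. Routing instead through the exact Markov identity above, and observing that $\E_{x_0}X_{\delta_n}=f(\delta_n,x_0)$ returns into the open region $U$ on which $\Phi_n$ blows up \emph{uniformly}, neatly circumvents any need to control the blow-up rate of $\|\psi(t,u)\|$ as $t\uparrow T$. The secondary subtlety—producing one fixed $x_0$ valid for the whole sequence—is handled by the ball in (\ref{eq:limcn}) together with the non-empty interior and convexity of $E$.
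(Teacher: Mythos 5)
Your proof is correct, and its skeleton coincides with the paper's: both extract from (\ref{eq:limcn}) a ball of directions on which $y\mapsto\psi(t_n,u)^\top y$ blows up uniformly along a subsequence $t_n\uparrow T$, bound $\psi_0(t_n,u)$ from below via Proposition~\ref{prop:phipluspsi}, and use the Markov property together with Theorem~\ref{th:ricexists} to write $\E_{x_0}\exp(u^\top X_T)=\E_{x_0}\exp(\psi_0(t_n,u)+\psi(t_n,u)^\top X_{T-t_n})$. The only genuine divergence is the final limiting step. The paper normalizes so that $\{-1,1\}^p\subset E^\circ$, takes the ball $B(x_0,\varepsilon)$ from (\ref{eq:limcn}) itself as a neighbourhood of the starting point $x_0$, observes that $X_{T-t_n}\to x_0$ $\PP_{x_0}$-a.s.\ by right-continuity so that the integrand tends to $\infty$ almost surely, and concludes by Fatou; note that Fatou goes the \emph{right} way here because after the Markov rewriting the left-hand side is constant in $n$, so your parenthetical claim that a Fatou argument gives an inequality in the wrong direction applies only to the naive comparison of $\E_{x_0}\exp(u^\top X_{t_n})$ with $\E_{x_0}\exp(u^\top X_T)$, not to the paper's route. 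Your substitute --- Jensen's inequality to replace $X_{T-t_n}$ by its mean $f(T-t_n,x_0)$, which converges deterministically into the open set $U$ on which $\Phi_n\to\infty$ uniformly --- is equally valid and has the mild advantage of avoiding any appeal to path regularity or almost-sure convergence; the price is the extra scaling construction of $U=x^{(0)}+rB(x^*,\varepsilon)\subset E^\circ$, which the paper sidesteps by its normalization. Either way the conclusion $\E_{x_0}\exp(u^\top X_T)=\infty$ follows, so the proposal stands.
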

\begin{proof}
Without loss of generality we may assume that $\{-1,1\}^p\subset
E^\circ$ (thus by convexity also $0\in E^\circ$).
Since $\|\psi(t, u)\|\rightarrow\infty$ for $t\uparrow T$ and in
view of (\ref{eq:limcn}), there exists a ball
$B:=B(x_0,\varepsilon)\subset E$ (with $x_0\in\{-1,1\}^p\subset
E^\circ$, $\varepsilon>0$) and a sequence $t_n\uparrow T$ such
that
\[
\inf_{y\in B} \psi(t_n, u)^\top y \rightarrow\infty\mbox{ as
$n\rightarrow\infty$}.
\]
Moreover, it holds that $\psi_0(t, u)\geq u^\top \E_0(X_t)$ for
$t<T$ by Proposition~\ref{prop:phipluspsi}. In particular we have
$\liminf_{t\uparrow T}\psi_0(t, u)>-\infty$. Hence
\[
\lim_{n\rightarrow\infty}\inf_{y\in B} (\psi_0(t_n, u)+\psi(t_n,
u)^\top y )=\infty.
\]
By right-continuity of $X$, it follows that
\[
\lim_{n\rightarrow\infty}(\psi_0(t_n, u)+\psi(t_n, u)^\top
X_{T-t_n})=\infty,\mbox{ $\PP_{x_0}$-a.s.}
\]
The Markov property and Theorem~\ref{th:ricexists} give
\begin{align*}
\E_x\exp( u^\top X_T)&=\E_x\left(\E_{X_{T-t}}\exp( u^\top
X_{t})\right)=\E_x \exp(\psi_0(t, u)+\psi(t, u)^\top X_{T-t}),
\end{align*}
for $0\leq t< T$, $x\in E$. Applying the previous together with
Fatou's Lemma we get
\begin{align*}
\E_{x_0}\exp( u^\top
X_T)&=\liminf_{n\rightarrow\infty}\E_{x_0}\exp( u^\top
X_T)\\&=\liminf_{n\rightarrow\infty}\E_{x_0} \exp(\psi_0(t_n,
u)+\psi(t_n, u)^\top X_{T-t_n})\\&\geq \E_{x_0}
\liminf_{n\rightarrow\infty}\exp(\psi_0(t_n, u)+\psi(t_n, u)^\top
X_{T-t_n})=\infty.
\end{align*}
\end{proof}
\begin{prop}\label{prop:Expinfty}
Consider the situation of Theorem~\ref{th:maintheorem}. Let
$T\geq0$. Then $M(T)= D_\R(T)$ and (\ref{eq:char}) holds for $u\in
M(T)$, $t\leq T$.
\end{prop}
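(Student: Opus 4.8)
The plan is to prove the two inclusions $D_\R(T)\subseteq M(T)$ and $M(T)\subseteq D_\R(T)$ separately; the affine transform formula on $M(T)$ then comes for free from the first inclusion once equality is established. For $D_\R(T)\subseteq M(T)$ I would invoke Theorem~\ref{th:ricexists}. If $u\in D_\R(T)$, i.e.\ $t_\infty(u)>T$, then $(\psi_0(\cdot,u),\psi(\cdot,u))$ is $C^1$ on $[0,T]$; assumption~(\ref{eq:exponentialmom}) furnishes conditions (1)--(2) of that theorem (all displayed integrals are finite and continuous in $t$ because $\psi(\cdot,u)$ is bounded on the compact interval $[0,T]$), while condition (3) is automatic under $\PP_x$ since $X_0=x$ gives $\E_x\exp(\psi(T,u)^\top X_0)=\exp(\psi(T,u)^\top x)<\infty$. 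Hence $\E_x\exp(u^\top X_t)=\exp(\psi_0(t,u)+\psi(t,u)^\top x)<\infty$ for all $t\le T$ and $x\in E$, which yields both $u\in M(T)$ and the affine transform formula~(\ref{eq:char}) on $D_\R(T)$. It remains to prove the contrapositive of $M(T)\subseteq D_\R(T)$: if $t_\infty(u)\le T$, then $u\notin M(T)$.

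Fix $u$ with $T':=t_\infty(u)\le T$ and assume, as in Lemma~\ref{lem:Tistinfty}, that $\{-1,1\}^p\subset E^\circ$, so that $0\in E^\circ$. First I would upgrade the conclusion of Lemma~\ref{lem:Tistinfty} from a single point to an open set. Choosing $t_n\uparrow T'$ with $\|\psi(t_n,u)\|\to\infty$ and passing to a subsequence along which $\psi(t_n,u)/\|\psi(t_n,u)\|\to v$ for a unit vector $v$, the lower bound $\psi_0(t,u)\ge u^\top\E_0 X_t$ of Proposition~\ref{prop:phipluspsi} shows $\psi_0(t_n,u)+\psi(t_n,u)^\top y\to\infty$ for every $y$ with $v^\top y>0$. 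Running the Markov-property/Fatou computation of Lemma~\ref{lem:Tistinfty} from such a starting point $y$ (right-continuity gives $X_{T'-t_n}\to y$, so $v^\top X_{T'-t_n}>0$ eventually) then yields $\E_y\exp(u^\top X_{T'})=\infty$ for \emph{every} $y\in E$ with $v^\top y>0$; since $0\in E^\circ$, this explosion set has non-empty interior.

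Finally I would propagate the explosion from time $T'$ to time $T$. Writing $T=(T-t_n)+t_n$ and combining the Markov property with the affine transform formula at $t_n<T'$, I get, for each $n$ and every $x\in E$, the equality $\E_x\exp(u^\top X_T)=\E_x\exp\bigl(\psi_0(t_n,u)+\psi(t_n,u)^\top X_{T-t_n}\bigr)$, whence by Fatou
\[
\E_x\exp(u^\top X_T)\ \ge\ \E_x\,\liminf_{n}\exp\bigl(\psi_0(t_n,u)+\psi(t_n,u)^\top X_{T-t_n}\bigr).
\]
Since $T-t_n\downarrow r:=T-T'\ge0$, right-continuity gives $X_{T-t_n}\to X_r$, and on the event $\{v^\top X_r>0\}$ the exponent tends to $+\infty$; hence $\E_x\exp(u^\top X_T)\ge\infty\cdot\PP_x(v^\top X_r>0)$. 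Thus $u\notin M(T)$ once I exhibit a single $x\in E$ with $\PP_x(v^\top X_r>0)>0$. For $r=0$ any $x$ with $v^\top x>0$ works. For $r>0$ I would use that $\E_x[v^\top X_r]=(e^{a^\top r}v)^\top x+v^\top\!\int_0^r e^{as}a^0\,\dd s$ is affine in $x$ (because $\E_x X_r=f(r,x)$ solves $\dot x=b(x)$ by Proposition~\ref{prop:phipluspsi}) and pick $x\in E$ making this mean strictly positive, which forces $\PP_x(v^\top X_r>0)>0$.

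The main obstacle is exactly this last reachability step. Choosing $x$ with $v^\top\E_x X_r>0$ requires $E$ to be suitably unbounded in the direction dictated by the blow-up of $\psi$. Genuine explosion ($t_\infty(u)<\infty$) does force some unboundedness of $E$ (e.g.\ in direction $u$, since otherwise $u^\top X_{T'}$ would be bounded and $\E_\cdot\exp(u^\top X_{T'})$ finite), but relating this to the explosion direction $v$ — or, more robustly, showing that the process started from a suitable point charges the open explosion set in time $r$ — is where the convexity of $E$ and the coupling between the blow-up direction and the non-degeneracy of the characteristics must be exploited; this is the delicate point of the whole argument. Once $M(T)\subseteq D_\R(T)$ is secured, equality holds, and the affine transform formula on $M(T)=D_\R(T)$ is the one already obtained in the first inclusion.
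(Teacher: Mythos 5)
Your first inclusion ($D_\R(T)\subseteq M(T)$ together with the validity of (\ref{eq:char}) there) is exactly the paper's argument via Theorem~\ref{th:ricexists}, and your strengthening of Lemma~\ref{lem:Tistinfty} to an open half-space $\{y\in E: v^\top y>0\}$ of explosion points at time $T'=t_\infty(u)$ is sound. The genuine gap is precisely where you flag it: the propagation of the explosion from time $T'$ to time $T>T'$. Your route reduces the whole proof to the reachability claim that for some $x\in E$ one has $\PP_x(v^\top X_{T-T'}>0)>0$, where $v$ is the blow-up direction of $\psi(\cdot,u)$ and $T-T'$ is a \emph{fixed, possibly large} time lag forced on you by the requirement $t_n<T'$ in the Markov-property step. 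You offer only a sufficient criterion (positivity of the affine mean $v^\top\E_x X_{T-T'}$) and no argument that it can be met: for a general closed convex $E$ the drift flow may carry all of $E$ into the closed half-space $\{v^\top y\leq 0\}$ by time $T-T'$, and even then positivity of the probability would have to come from non-degeneracy of the diffusion or jump part, which you have not invoked. Since this is the entire content of $M(T)\subseteq D_\R(T)$ beyond the endpoint case $T=T'$, the proof is incomplete.

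The paper avoids the reachability problem altogether by arguing deterministically on the Riccati side. Assuming for contradiction that $\E_x\exp(u^\top X_T)<\infty$ for all $x$, Jensen's inequality gives $\E_x\exp(\lambda u^\top X_T)\leq 1+\E_x\exp(u^\top X_T)<\infty$ for $\lambda\in[0,1]$, so one may work with approximants $u_n=\lambda_n u\uparrow u$ that lie in $D_\R(T)$, where (\ref{eq:char}) is already available at time $T$. One first shows $\limsup_n\|\psi(t_\infty(u),u_n)\|=\infty$ (your Fatou argument, applied to the $u_n$), and then propagates this norm blow-up forward from $t_\infty(u)$ to $T$ in steps of a fixed $\varepsilon>0$ using the integral identity of Proposition~\ref{prop:phipluspsi}: if $\psi_0(t_0,u_n)+\psi(t_0,u_n)^\top x\to\infty$ for some $x\in\{-1,1\}^p$, then evaluating at the backward-flowed point $y=f(t_0-t_1,x)$ (which lies in $E$ for $t_1-t_0\leq\varepsilon$ because $\{-1,1\}^p\subset E^\circ$ and the drift flow is continuous) and using the nonnegativity of $k$ gives $\psi_0(t_1,u_n)+\psi(t_1,u_n)^\top y\to\infty$. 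Iterating up to $t_1=T$ produces an $x$ with $\exp(\psi_0(T,u_n)+\psi(T,u_n)^\top x)=\E_x\exp(u_n^\top X_T)\to\infty$, contradicting the Jensen bound. In short: where you try to make the \emph{process} reach the explosion set forward in time, the paper makes the \emph{evaluation point} reach it by flowing backward along the drift ODE, which only requires an infinitesimal amount of room inside $E^\circ$ rather than a global reachability property. You would need to replace your last step by something of this kind to close the argument.
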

\begin{proof}
In view of Theorem~\ref{th:ricexists} it is sufficient to prove
$M(T)\subset D_\R(T)$. Without loss of generality we may assume
that $\{-1,1\}^p\subset E^\circ$.  Let $ u\in\R^p$ and suppose
$t_\infty( u)<\infty$. We need to show that for all $T\geq
t_\infty( u)$ there exists $x\in E$ such that $\E_x\exp( u^\top
X_T)=\infty$. Lemma~\ref{lem:Tistinfty} gives the result for
$T=t_\infty( u)$. Therefore, let $T>t_\infty( u)$. Arguing by
contradiction, assume $\E_x\exp( u^\top X_T)<\infty$ for all $x\in
E$. Then by Jensen's inequality we have
\begin{align}\label{eq:Jensen}
\E_x\exp(\lambda u^\top X_T)\leq(\E_x\exp( u^\top X_T))^\lambda
\leq 1+\E_x\exp( u^\top X_T)<\infty,
\end{align}
for all $0\leq\lambda\leq 1$, $x\in E$. Let
$\lambda^*=\inf\{\lambda\geq 0:\lambda u\not\in D_\C(T)\}$. Note
that $0<\lambda^*\leq 1$ and $\lambda^* u\not\in D_\C(T)$, since $
u\not\in D_\C(T)$ and $D_\C(T)$ is an open neighborhood of $0$.
Considering $\lambda^* u$ instead of $ u$, we may assume without
loss of generality that $\lambda^*=1$. In the following, we let $
u_n=\lambda_n u$, for arbitrary $\lambda_n\in[0,1)$ such that
$\lambda_n\uparrow 1$ as $n\rightarrow\infty$, so that $ u_n\in
D_\C(T)$ and $ u_n\rightarrow u$. We divide the proof into a
couple of steps.

\emph{Step 1.} If for some $t\leq T$ and $x\in E$ we have
\begin{align}\label{eq:phipsiinfty}
 \lim_{n\rightarrow\infty} (\psi_0(t, u_n)
+\psi(t, u_n)^\top x)=\infty,
\end{align}
then $\limsup_{n\rightarrow\infty}\|\psi(t, u_n)\|=\infty$. To
prove this, suppose (\ref{eq:phipsiinfty}) holds for some $t\leq
T$, but $\limsup_{n\rightarrow\infty}\|\psi(t, u_n)\|<\infty$.
Then $\lim_{n\rightarrow\infty}\psi_0(t, u_n)=\infty$ and
(\ref{eq:phipsiinfty}) holds for all $x$. Since $ u_n\in
D_\C(T)\subset D_\C(t)$,  the Markov property and
Theorem~\ref{th:ricexists} give
\begin{align*}
\E_x \exp( u_n^\top X_T)&=\E_x\left(\E_{X_{T-t}}\exp( u_n^\top
X_{t})\right) \\&=\E_x \exp(\psi_0(t, u_n)+\psi(t, u_n)^\top
X_{T-t}).
\end{align*}
Fatou's Lemma yields
\begin{align*}
\infty&=\E_x \liminf_{n\rightarrow\infty}\exp(\psi_0(t,
u_n)+\psi(t, u_n)^\top
X_{T-t})\\&\leq\liminf_{n\rightarrow\infty}\E_x \exp(\psi_0(t,
u_n)+\psi(t, u_n)^\top X_{T-t})=\liminf_{n\rightarrow\infty}\E_x
\exp( u_n^\top X_T),
\end{align*}
which contradicts (\ref{eq:Jensen}) as $ u_n=\lambda_n u$ with
$0\leq \lambda_n<1$.

\emph{Step 2.} It holds that
\begin{align}\label{eq:psiinfty}
\limsup_{n\rightarrow\infty}\|\psi(t_\infty( u), u_n)\|=\infty.
\end{align}
Indeed, since $ u_n\in D_\C(T)\subset D_\C(t_\infty( u))$, Fatou's
Lemma together with Theorem~\ref{th:ricexists} gives
\begin{align*}
\E_x \exp( u^\top X_{t_\infty( u)})&\leq
\liminf_{n\rightarrow\infty}\E_x\exp( u_n^\top X_{t_\infty(
u)})\\&=\liminf_{n\rightarrow\infty}\exp(\psi_0(t_\infty( u), u_n)
+\psi(t_\infty( u), u_n)^\top x),
\end{align*}
for all $x\in E$. In view of Lemma~\ref{lem:Tistinfty} there
exists an $x_0\in E$ such that we have $\E_{x_0} \exp( u^\top
X_{t_\infty( u)})=\infty$, whence
\[
\psi_0(t_\infty( u), u_n) +\psi(t_\infty( u), u_n)^\top
x_0\rightarrow\infty,\mbox{ as }n\rightarrow\infty.
\]
Step~1 yields (\ref{eq:psiinfty}).

\emph{Step 3.} It holds that
$\limsup_{n\rightarrow\infty}\|\psi(T, u_{n})\|=\infty$. To prove
this, we show that there exists $\varepsilon>0$ such that if
$\limsup_{n\rightarrow\infty}\|\psi(t_0, u_n)\|=\infty$ for some
$t_0\in [t_\infty( u),T]$, then
$\limsup_{n\rightarrow\infty}\|\psi(t_1, u_{n})\|=\infty$ for
$t_1=T\wedge(t_0+\varepsilon)$. By Step~2 and an iteration of the
above implication, it follows that
$\limsup_{n\rightarrow\infty}\|\psi(T, u_{n})\|=\infty$.

Write $f(t,x)$ for the solution to the linear
ODE~(\ref{eq:linODE}) with $f(0,x)=x$. By continuity of $f$ and
the assumption $\{-1,1\}^p\subset E^\circ$, there exists
$\varepsilon>0$ such that $f(-t,x)\in E$ for all $x\in\{-1,1\}^p$,
$0\leq t\leq\varepsilon$. Let $t_0\in [t_\infty( u),T]$ and
$t_1=T\wedge(t_0+\varepsilon)$. Suppose
$\limsup_{n\rightarrow\infty}\|\psi(t_0, u_n)\|=\infty$. Then in
view of (\ref{eq:limcn}), there exist $x\in\{-1,1\}^p$ and a
subsequence of $ u_n$ (also denoted by $ u_n$) such that
\[
\lim_{n\rightarrow\infty}\psi(t_0, u_n)^\top x =\infty.
\]
As in the proof of Lemma~\ref{lem:Tistinfty} we have
$\liminf_{n\rightarrow\infty} \psi_0(t_0, u_n)>-\infty$. Hence
\begin{align}\label{eq:limphipluspsi}
\lim_{n\rightarrow\infty}(\psi_0(t_0, u_n)+\psi(t_0, u_n)^\top x)
=\infty.
\end{align}
Since $t_0-t_1\geq-\varepsilon$, we have $y:=f(t_0-t_1,x)\in E$
and by the semi-group property of the flow it holds that
\[
\E_y X_{t_1-s}=f(t_1-s,f(t_0-t_1,x))=f(t_0-s,x)=\E_x
X_{t_0-s},\mbox{ for $s\leq t_0$}.
\]
Let $k$ be the non-negative function given by (\ref{eq:kxy}). It
follows from Proposition~\ref{prop:phipluspsi} that
\begin{align*}
\psi_0(t_1, u_n)+\psi(t_1, u_n)^\top y&= u^\top_n \E_y X_{t_1}
+\int_0^{t_1} k(\E_y X_{t_1-s},\psi(s, u_n))\dd s\\
&\geq  u^\top_n \E_y X_{t_1} +\int_0^{t_0} k(\E_y X_{t_1-s},\psi(s, u_n))\dd s\\
&=  u^\top_n \E_y X_{t_1} +\int_0^{t_0} k(\E_x X_{t_0-s},\psi(s, u_n))\dd s\\
&=  u^\top_n (\E_y X_{t_1} - \E_x X_{t_0})+ \psi_0(t_0,
u_n)+\psi(t_0, u_n)^\top x,
\end{align*}
which tends to infinity as $n\rightarrow\infty$. Step~1 yields
$\limsup_{n\rightarrow\infty}\|\psi(t_1, u_n)\|=\infty$.

\emph{Step 4.} We are now able to conclude the proof. By Step~3
and (\ref{eq:limphipluspsi}) with $t_0=T$, there is an
$x\in\{-1,1\}^p$ and a subsequence of $ u_n$ (also denoted by $
u_n$) such that
\[
\lim_{n\rightarrow\infty}(\psi_0(T, u_n)+\psi(T, u_n)^\top
x)=\infty.
\]
From (\ref{eq:Jensen}) and Theorem~\ref{th:ricexists} we obtain
\begin{align*}
1+\E_x \exp( u^\top X_T)&\geq \E_x \exp( u_n^\top X_T)=
\exp(\psi_0(T, u_n)+\psi(T, u_n)^\top x),
\end{align*}
for all $n$. The right-hand side tends to infinity, whence $\E_x
\exp( u^\top X_T)=\infty$, contrary to the assumption.
\end{proof}

\section{Extending the validity to complex exponentials}\label{sec:complexexp}

To show that $S(M(T))\subset D_\C(T)$ we need continuity of
$x\mapsto \E_x \exp(u^\top X_T)$. We prove this first in the next
lemma, together with some additional results needed in
Section~\ref{sec:boundedexp}.
\begin{lemma}\label{lem:continx}
Let $X$ be an affine jump-diffusion on
$(D_E(0,\infty],\mathcal{F}^X,(\mathcal{F}^X_{t+}),\PP)$ with
differential characteristics $(b(X),c(X),K(X,\dd z))$ given by
(\ref{eq:bcK}). Assume
\begin{align}\label{eq:z2Kfinite}
 \int |z|^2 |K^i|(\dd z)<\infty,\mbox{ for
all }i=0,\ldots,p.
\end{align}
and let $u\in\C^p$ be such that $\sup_{x\in E} \Re u^\top
x<\infty$. Suppose there exists functions $\Psi_0:[0,T)\mapsto
\C$, $\psi:[0,T)\mapsto \C^p$ such that $\Psi_0(t)\neq 0$ for
$t<T$ and
\[
\E_x \exp(u^\top X_t)=\Psi_0(t)\exp(\psi(t)^\top x)\mbox{ for all
$x\in E$, $t<T$}.
\]
Then there exists a function $\psi_0$ such that
$\Psi_0(t)=\exp(\psi_0(t))$ and $(\psi_0,\psi)$ solve the system
of generalized Riccati equations (\ref{eq:riccati}) on $[0,T)$.
Moreover, $x\mapsto \E_x \exp(u^\top X_T)$ is continuous on $E$.
\end{lemma}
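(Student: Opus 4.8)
The plan is to upgrade the given exponential-affine identity to differentiability in $t$, then read off the Riccati equations by the It\^o-plus-separation argument of Proposition~\ref{prop:afprocisafdif}, and finally treat the endpoint continuity by a separate uniform-convergence estimate. Throughout, the hypothesis $c:=\sup_{x\in E}\Re u^\top x<\infty$ is used to make $e^{u^\top X_t}$ bounded by $e^{c}$.

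\emph{Regularity in $t$.} Applying It\^o's formula to $x\mapsto e^{u^\top x}$ along the decomposition (\ref{eq:decompx}), exactly as in (\ref{eq:ftXt}), and taking $\PP_x$-expectations, I would obtain
\[
\E_x e^{u^\top X_t} = e^{u^\top x}+\int_0^t \E_x\big[e^{u^\top X_s}F(X_s)\big]\,\dd s,
\]
where $F(x)=u^\top b(x)+\tfrac12 u^\top c(x)u+\int(e^{u^\top z}-1-u^\top z)K(x,\dd z)$. The point making this rigorous is that the two local-martingale terms are genuine martingales: the continuous part has $L^2$-norm controlled by $\E_x\int_0^t e^{2\Re u^\top X_s}\|c(X_s)\|\,\dd s\le C\,\E_x\int_0^t(1+|X_s|)\,\dd s<\infty$ via Lemma~\ref{lem:sup}, while for the jump part the invariance $E+F\subset E$ gives $|e^{u^\top X_{s-}}(e^{u^\top z}-1)|\le 2e^{c}$, so that its predictable quadratic variation is controlled using (\ref{eq:z2Kfinite}). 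The integrand on the right is continuous in $s$ by dominated convergence (the compensator $K(X_t,\dd z)\dd t$ is absolutely continuous, so $X$ has no fixed discontinuities, and $\E_x\sup_{s\le t}|X_s|<\infty$). Hence $t\mapsto \E_x e^{u^\top X_t}$ is $C^1$ on $[0,T)$.

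\emph{Passing to $\psi_0,\psi$ and the Riccati equations.} Fix $p+1$ affinely independent points $x_0,\dots,x_p\in E$ (available since $E$ has non-empty interior). Each $t\mapsto \E_{x_j}e^{u^\top X_t}=\Psi_0(t)e^{\psi(t)^\top x_j}$ is continuous, non-vanishing and $C^1$; taking continuous complex logarithms and inverting the invertible matrix with rows $(1,x_j^\top)$ yields continuous $C^1$ functions $\psi_0$ with $\Psi_0=e^{\psi_0}$ and a continuous $C^1$ version of $\psi$. Since the representation determines $\psi(t)$ uniquely on a set with non-empty interior, the given $\psi$ coincides with this version, so $\psi_0,\psi\in C^1([0,T))$. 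Now fix $T'<T$; by the Markov property and the hypothesis, $M_t:=\E(e^{u^\top X_{T'}}\mid\mathcal F^X_t)=e^{\psi_0(T'-t)+\psi(T'-t)^\top X_t}$ is a martingale, and applying It\^o as in (\ref{eq:ftXt}) is now legitimate. Its drift $\int_0^t I(s,X_s)\,\dd s$ must vanish, so $I(s,X_s)=0$; evaluating at $s=0$ and letting the initial point range over $E$ (as in Proposition~\ref{prop:afprocisafdif}) gives, writing $\psi=\psi(T')$, for all $y\in E$
\[
\dot\psi_0(T')+\dot\psi(T')^\top y=\psi^\top b(y)+\tfrac12\psi^\top c(y)\psi+\int(e^{\psi^\top z}-1-\psi^\top z)K(y,\dd z).
\]
Separating the affine-in-$y$ terms with (\ref{eq:bcK}) and affine independence produces $\dot\psi_i=R_i(\psi)$, $i=0,\dots,p$, with $R_i$ as in (\ref{eq:Ri}); since $T'<T$ is arbitrary, $(\psi_0,\psi)$ solves (\ref{eq:riccati}) on $[0,T)$.

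\emph{Continuity at $T$.} For $t<T$ the map $x\mapsto \E_x e^{u^\top X_t}=\Psi_0(t)e^{\psi(t)^\top x}$ is continuous, so it suffices to show $\E_x e^{u^\top X_t}\to \E_x e^{u^\top X_T}$ as $t\uparrow T$ uniformly on compacta. Using $|e^{u^\top X_t}-e^{u^\top X_T}|\le C(|X_T-X_t|\wedge 1)$ (boundedness by $e^{c}$ for large increments, a first-order estimate for small ones) and then $\E_x|X_T-X_t|\le C_{\mathcal K}\sqrt{T-t}$ on a compact $\mathcal K$ (triangle inequality and the Burkholder--Davis--Gundy inequality applied to (\ref{eq:decompx}), with Lemma~\ref{lem:sup} and (\ref{eq:z2Kfinite})), the convergence is locally uniform and the limit is continuous. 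I expect the technical heart to be the true-martingale justification in the first step, where the boundedness from $\sup_{x\in E}\Re u^\top x<\infty$ and the invariance $E+F\subset E$ are indispensable, together with the uniform-in-$x$ control of increments in the last step.
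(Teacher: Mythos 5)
Your proposal is correct, and its first two steps coincide with the paper's proof: the paper likewise applies It\^o's formula to $e^{u^\top x}$, uses $E+F\subset E$ and $\sup_{x\in E}\Re u^\top x<\infty$ to justify the martingale property of the stochastic integrals (via localization, Lemma~\ref{lem:sup} and dominated convergence rather than your $L^2$ bounds, but to the same effect), obtains the integral equation $\E_x e^{u^\top X_t}=e^{u^\top x}+\int_0^t\E_x(e^{u^\top X_s}f(X_s))\,\dd s$, deduces that $t\mapsto\E_x e^{u^\top X_t}$ is $C^1$, defines $\psi_0=\int_0^\cdot\Psi_0'/\Psi_0$, and reads off the Riccati equations from the vanishing drift exactly as in Proposition~\ref{prop:afprocisafdif}; your explicit use of $p+1$ affinely independent points only makes precise what the paper leaves implicit (and, like the paper, tacitly uses that $E$ affinely spans $\R^p$, which is not stated in the lemma but holds in all applications). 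Where you genuinely diverge is the continuity of $x\mapsto\E_x e^{u^\top X_T}$: the paper differentiates the integral equation to get $\E_x(e^{u^\top X_t}f(X_t))=(\dot\psi_0(t)+\dot\psi(t)^\top x)e^{\psi_0(t)+\psi(t)^\top x}$, hence continuity in $x$ for each $t<T$, and then applies dominated convergence (via Lemma~\ref{lem:sup}) to $x\mapsto\int_0^T\E_x(e^{u^\top X_s}f(X_s))\,\dd s$; you instead prove locally uniform convergence $\E_x e^{u^\top X_t}\to\E_x e^{u^\top X_T}$ as $t\uparrow T$ through the increment bound $\E_x|X_T-X_t|\le C_{\mathcal K}\sqrt{T-t}$. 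Your route costs a Burkholder--Davis--Gundy estimate but is self-contained and gives the slightly stronger conclusion of uniform-on-compacta convergence of the transforms up to the endpoint; the paper's route recycles the integral equation it has already established and avoids any increment estimate. Both are valid.
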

\begin{proof}
Recall that $K$ is a transition kernel from $E$ to $F$ satisfying
$F+E\subset E$. Iterating this relation yields $nF+E\subset E$ for
all $n\in\N$. Since $\sup_{x\in E}\Re u^\top x<\infty$, it follows
that $\Re u^\top z\leq 0$ for $z\in F$. Hence $f$ given by
\[
f(x)=u^\top b(x)+\half u^\top c(x) u + \int(e^{u^\top z}-1-u^\top
z)K(x,\dd z)
\]
is well-defined and by It\^{o}'s formula
\begin{align*}
\exp(u^\top X_t)-\int_0^t \exp(u^\top X_s) f(X_s)\dd s
\end{align*}
is a local martingale. Therefore, there exists a sequence of
stopping times $T_n\uparrow\infty$ such that
\[
\E_x \exp(u^\top X_{t\wedge T_n})=\exp(u^\top x)+\E_x
\int_0^{t\wedge T_n}\exp(u^\top X_{s})f(X_{s})\dd s.
\]
Note that $|\exp(u^\top X_s)f(X_{s})|\leq C(1+\sup_{s\leq t
}|X_s|)$ for some $C>0$. In view of Lemma~\ref{lem:sup}, we can
apply the Dominated Convergence Theorem as well as Fubini's
Theorem to derive that
\begin{align}\label{eq:f}
\E_x \exp(u^\top X_{t})=\exp(u^\top x)+ \int_0^{t}\E_x(\exp(u^\top
X_{s})f(X_{s}))\dd s.
\end{align}
By the same lemma together with the Dominated Convergence Theorem
we get that $s\mapsto \E_x(\exp(u^\top X_{s})f(X_{s}))$ is
continuous, as $X_s$ is right-continuous and quasi
left-continuous. The Fundamental Theorem of Calculus yields that
$t\mapsto\E_x \exp(u^\top X_{t})$ is continuously differentiable
for all $x\in E$, which implies that $\Psi_0$ and $\psi_i$ are
continuously differentiable in $t$. Define $\psi_0$ by
\[
\psi_0(t)=\int_0^t\frac{\Psi_0'(s)}{\Psi_0(s)}\dd s,\quad t<T.
\]
Then $\psi_0$ is also continuously differentiable and
$\Psi_0(t)=\exp(\psi_0(t))$ (indeed, the quotient of the left- and
right-hand side has derivative 0 and equality holds for $t=0$,
whence it holds for all $t$). Necessarily $(\psi_0,\psi)$ has to
satisfy the generalized Riccati equations (\ref{eq:riccati}), in
view of (\ref{eq:dotpsi}).

To show the second assertion we note that by (\ref{eq:f}) and the
previous we have
\begin{align*}
\E_x(\exp(u^\top X_{t})f(X_{t}))&=\frac{\partial}{\partial t}\E_x
\exp(u^\top X_{t})\\&=(\dot{\psi_0}(t,u)+\dot{\psi}(t,u)^\top
x)\exp(\psi_0(t,u)+\psi(t,u)^\top x).
\end{align*}
So $x\mapsto \E_x(\exp(u^\top X_{t})f(X_{t}))$ is continuous for
$t<T$. By Lemma~\ref{lem:sup} and the Dominated Convergence
Theorem we see that
\[
x\mapsto \int_0^{T}\E_x(\exp(u^\top X_{s})f(X_{s}))\dd s\mbox{ is
continuous},
\]
whence $x\mapsto \E_x \exp(u^\top X_{T})$ is continuous.
\end{proof}

To extend the validity of the affine transform formula from real
to complex exponentials, we use the analyticity of the
characteristic function and the solutions to the Riccati
equations. This is demonstrated in the next lemma, which we apply
in Proposition~\ref{prop:complexexp} below to derive the desired
assertion.
\begin{lemma}\label{lem:analytic}
Consider the situation of Theorem~\ref{th:maintheorem}. For
$t\geq0$, if $U\subset S(M(t))\cap D_\C(t)$ is connected and $0\in
U$, then (\ref{eq:char}) holds for all $u\in U$.
\end{lemma}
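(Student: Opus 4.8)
The goal is to show that the affine transform formula
\eqref{eq:char} holds throughout a connected set $U \subset S(M(t)) \cap D_\C(t)$ containing $0$, given that it already holds on the real slice (by Proposition~\ref{prop:Expinfty}). The natural strategy is an \emph{analytic continuation / identity theorem} argument. First I would fix $t \geq 0$ and consider the two functions of $u$ defined on $U$, namely
\[
\Phi(u) := \E_x \exp(u^\top X_t) \quad\text{and}\quad \Psi(u) := \exp(\psi_0(t,u) + \psi(t,u)^\top x),
\]
for an arbitrary fixed $x \in E$. The affine transform formula for $u \in U$ is precisely the assertion $\Phi(u) = \Psi(u)$ on $U$. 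The plan is to show both sides are analytic on $U$ and that they agree on a set with an accumulation point inside $U$, so that the identity theorem for several complex variables forces equality everywhere on the connected set $U$.

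The analyticity of the right-hand side $\Psi$ is immediate from Proposition~\ref{prop:filipmayer}\eqref{item:fm2}, since $\psi_0$ and $\psi$ are analytic on $D_\C$ and hence on $U \subset D_\C(t)$; composition with the entire exponential preserves analyticity. For the left-hand side $\Phi$, I would invoke Proposition~\ref{prop:filipmayer}\eqref{item:fm3}: because $U \subset S(M(t))$, for every $u \in U$ we have $\Re u \in M(t) = D_\R(t)$ (using part~\eqref{twee} of Theorem~\ref{th:maintheorem}), so $\E_x \exp((\Re u)^\top X_t) < \infty$, meaning the integral $\int \exp(u^\top x)\,\dd\nu(x)$ with $\nu = \PP_x \circ X_t^{-1}$ converges for all $u$ with real part in the open set $D_\R(t) = M(t)$. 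Part~\eqref{item:fm3} then gives analyticity of $\Phi$ on the strip $S(M(t))$, and in particular on $U$.

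It remains to produce a coincidence set with an interior accumulation point. Here is where I would use the hypotheses $0 \in U$ and $U$ connected: by Theorem~\ref{th:maintheorem}\eqref{vier} combined with Proposition~\ref{prop:Expinfty}, the formula already holds for real $u \in M(t) = D_\R(t)$. Since $U$ is a connected open\footnote{One must check $U$ is open; it is the intersection of two open sets $S(M(t))$ and $D_\C(t)$.} neighborhood structure around $0$, and $D_\R(t)$ is an open neighborhood of $0$ in $\R^p$, the real points of $U$ near $0$, i.e.\ $U \cap \R^p$ (a relatively open subset of $\R^p$ containing $0$), form a set on which $\Phi = \Psi$ and which has nonempty interior in $\R^p$; such a set automatically has accumulation points in every coordinate direction. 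By the identity theorem for analytic functions of several complex variables, two analytic functions agreeing on a real-dimensional set containing an open piece of $\R^p$ must agree on the whole connected domain $U$. This yields $\Phi = \Psi$ on all of $U$, and since $x \in E$ was arbitrary, \eqref{eq:char} holds for all $u \in U$.

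The main obstacle I anticipate is the careful justification of the identity-theorem step: two analytic functions on a connected open set in $\C^p$ that agree on a \emph{totally real} subset (a piece of $\R^p$) do indeed coincide, but this requires that the coincidence set be large enough---agreement on a set with an accumulation point is not by itself sufficient in several variables, one needs agreement on a set containing a nonempty open subset of a maximally-real submanifold, or one reduces to the one-variable case by restricting to complex lines through $0$. I would handle this by restricting to each complex line $\{\zeta v : \zeta \in \C\}$ through the origin (for directions $v$ keeping the line inside $U$), where the two functions become analytic functions of a single complex variable $\zeta$ agreeing on a real interval around $\zeta = 0$, hence agreeing on the connected component of the line in $U$ by the one-dimensional identity theorem; a connectedness/open-ness argument then propagates equality across all of $U$. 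Verifying that this line-by-line argument covers the connected set $U$ (rather than just a star-shaped neighborhood of $0$) is the delicate point and will need the connectedness hypothesis together with the fact that the coincidence set is open.
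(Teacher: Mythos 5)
Your overall strategy---analytic continuation from the real slice $M(t)$ using Proposition~\ref{prop:filipmayer}\eqref{item:fm2} and \eqref{item:fm3}---is the same as the paper's, but there are two genuine gaps in the execution. First, your footnote asserting that $U$ is open misreads the hypothesis: $U$ is an \emph{arbitrary} connected subset of $S(M(t))\cap D_\C(t)$, not that intersection itself, and in the applications (Proposition~\ref{prop:complexexp}, Step~1) $U$ is a line segment $[0,u_0]$ or $[0,u_0)$, which has empty interior. An identity-theorem argument carried out ``on $U$'' therefore has no domain to work on. The paper repairs this by noting that $U\cup M(t)$ is connected (both pieces contain $0$ and $M(t)$ is convex) and sits inside the open set $S(M(t))\cap D_\C(t)$, hence is contained in a connected open domain $B\subset S(M(t))\cap D_\C(t)$; the continuation is then performed on $B$, and $U\subset B$ inherits the conclusion.

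Second, and more seriously, your proposed resolution of the several-variables identity theorem---restricting to complex lines $\{\zeta v:\zeta\in\C\}$ through the origin---does not work. On such a line the known coincidence set is $\{\zeta: \zeta v\in M(t)\}$, and for a generic complex direction $v$ (e.g.\ $v=(1,\ii)$ in $\C^2$) the only $\zeta$ with $\zeta v\in\R^p$ is $\zeta=0$; your claim that the two functions ``agree on a real interval around $\zeta=0$'' is false unless $v$ is (a complex multiple of) a real vector, and the union of complex lines in real directions is a thin subset of $\C^p$ that does not cover $B$. The correct statement, which the paper invokes, is that a nonempty open subset of $\R^p$ is a set of uniqueness for holomorphic functions on a connected domain of $\C^p$ containing it: all complex partial derivatives of the difference $\Phi-\Psi$ can be computed as real directional derivatives along $M(t)$, where the difference vanishes identically, so the full Taylor series vanishes at a point of $M(t)$ and the difference is identically zero on $B$. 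With these two repairs your argument coincides with the paper's proof.
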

\begin{proof}
By Proposition~\ref{prop:Expinfty} equality (\ref{eq:char}) holds
for $u\in M(t)$. The left-hand side of (\ref{eq:char}) as a
function of $u$ is analytic on $S(M(t))$ and the right-hand side
is analytic on $D_\C(t)$, see
Proposition~\ref{prop:filipmayer}~(\ref{item:fm2}) and
(\ref{item:fm3}). By assumption and the fact that $S(M(t))\cap
D_\C(t)$ is an open neighborhood of $0$ (since $M(t)=D_\R(t)$ by
Proposition~\ref{prop:Expinfty} and $D_\C(t)$ is an open
neighborhood of $0$ by
Proposition~\ref{prop:filipmayer}~(\ref{item:fm2})), there exists
an open domain $B\subset S(M(t))\cap D_\C(t)$ containing the
connected set $U\cup M(t)$ (as $M(t)$ is convex). It holds that
$M(t)$, being an open set in $\R^p$, is a set of uniqueness for
$B$, whence we can extend the equality in (\ref{eq:char}) to $u\in
B$, in particular to $u\in U$.
\end{proof}
\begin{prop}\label{prop:complexexp}
Consider the situation of Theorem~\ref{th:maintheorem} and let
$T_0>0$ be arbitrary. Then $S(D_\R(T_0))\subset D_\C(T_0)$ and the
affine transform formula (\ref{eq:char}) holds for all $u\in
S(D_\R(T_0))$, $t=T_0$.
\end{prop}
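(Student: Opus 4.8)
The plan is to recall from Proposition~\ref{prop:Expinfty} that $D_\R(T_0)=M(T_0)$ and that (\ref{eq:char}) holds for real $u\in M(T_0)$ at every $t\le T_0$, so that $S(D_\R(T_0))=S(M(T_0))$. Let $U$ be the connected component of $0$ in the open set $S(M(T_0))\cap D_\C(T_0)$. Since $M(T_0)$ is convex by Theorem~\ref{th:maintheorem}(\ref{drie}), the strip $S(M(T_0))$ is convex, hence connected, and Lemma~\ref{lem:analytic} shows that (\ref{eq:char}) holds on $U$ at $t=T_0$. As $U$ is relatively open in $S(M(T_0))$, the whole proposition follows once I prove that $U$ is also relatively closed, since then $U=S(M(T_0))\subset D_\C(T_0)$.

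To show closedness I would take $w\in\overline U\cap S(M(T_0))$. If $w\in D_\C(T_0)$ a neighbourhood of $w$ lies in $S(M(T_0))\cap D_\C(T_0)$ and meets $U$, whence $w\in U$; so it suffices to exclude a boundary point $w$ with $t_\infty(w)\le T_0$. For such a $w$, Proposition~\ref{prop:filipmayer}(\ref{item:fm1}) forces $\|\psi(t,w)\|\to\infty$ as $t\uparrow t_\infty(w)$. Picking $w_n\in U$ with $w_n\to w$ and passing to the limit in (\ref{eq:char}) — using that $\psi_0,\psi$ are continuous on the open set $D_\C$ by Proposition~\ref{prop:filipmayer}(\ref{item:fm2}) and that dominated convergence applies because $\Re w_n\to\Re w\in M(T_0)$ — I would first extend the identity $\E_x\exp(w^\top X_t)=\exp(\psi_0(t,w)+\psi(t,w)^\top x)$ to all $t<t_\infty(w)$ and all $x\in E$.

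The engine of the contradiction is comparison with the real part. Since $\Re w\in D_\R(T_0)$ we have $t_\infty(w)\le T_0<t_\infty(\Re w)$, so the real solution $\psi(\cdot,\Re w)$ does not explode on $[0,t_\infty(w)]$, and the bound $|\E_x\exp(w^\top X_t)|\le\E_x\exp((\Re w)^\top X_t)$ controls $\Re(\psi_0(t,w)+\psi(t,w)^\top x)$ from above uniformly for $t<t_\infty(w)$ and $x$ on compacta. If some interior reference point $x_0\in E^\circ$ satisfies $\E_{x_0}\exp(w^\top X_t)\not\to0$, then translating so that $x_0=0$ makes $\Re\psi_0(t,w)=\log|\E_0\exp(w^\top X_t)|$ bounded, and evaluating the upper bound at $x=\pm\tfrac{r}{2}e_j$ with $B(0,r)\subset E^\circ$ bounds $\Re\psi(t,w)$. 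With $\Re\psi(t,w)$ bounded, $\Im\psi(t,w)$ solves an ODE whose right-hand side grows at most linearly in $\Im\psi$ — the quadratic and jump terms of $R_i$ contribute coefficients controlled by the bounded real part and the exponential-moment assumption (\ref{eq:exponentialmom}) — so Gr\"onwall keeps $\Im\psi(t,w)$ bounded on $[0,t_\infty(w))$, contradicting $\|\psi(t,w)\|\to\infty$.

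The hard part, as anticipated in the introduction, is the degenerate case where $\E_x\exp(w^\top X_t)\to0$ for \emph{every} $x\in E^\circ$ as $t\uparrow t_\infty(w)$, i.e.\ the failure of infinite divisibility in which the vanishing of the left-hand side of (\ref{eq:char}) is exactly what forces $\psi$ to explode; the real-part estimate above gives no control there. I would rule this out by analyticity: for $s:=t_\infty(w)\le T_0$ the map $u\mapsto\E_x\exp(u^\top X_s)$ is analytic on $S(M(s))\supset S(M(T_0))$ by Proposition~\ref{prop:filipmayer}(\ref{item:fm3}) and strictly positive on the real slice $M(s)$, so its zero set is a proper analytic subvariety whose complement in $S(M(s))$ remains connected. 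Feeding this into the continuity of $x\mapsto\E_x\exp(w^\top X_s)$ supplied by Lemma~\ref{lem:continx}, together with an oscillation estimate (a linear functional $\Im\psi(t,w)^\top x$ of exploding norm cannot keep $e^{\ii\Im\psi(t,w)^\top x}$ convergent for $x$ ranging over an open ball), one sees that simultaneous vanishing at the single point $w$ is incompatible with the non-vanishing for $t<s$ and the positivity on the real slice. Once this case is excluded, $U$ is relatively closed, so $U=S(M(T_0))$, which yields both $S(D_\R(T_0))\subset D_\C(T_0)$ and the validity of (\ref{eq:char}) on $S(D_\R(T_0))$ at $t=T_0$.
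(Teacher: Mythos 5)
Your reduction (clopen argument in $S(M(T_0))$, reduction to a boundary point $w$ with $t_\infty(w)\le T_0$, extension of (\ref{eq:char}) to $t<t_\infty(w)$) matches the paper's Step~1, and you correctly identify the crux: the possibility that $\E_x\exp(w^\top X_t)\to 0$. But from there the proposal has genuine gaps. First, your ``non-degenerate'' branch is not needed and is also not sound as stated: the paper's Step~2 shows that the limit is in fact \emph{always} zero on a dense subset of every ball in $E^\circ$ (if the limit of $\psi_0(t,w)+\psi(t,w)^\top x$ were finite modulo $2\pi\ii$ on a whole ball, $\psi(t,w)$ itself would converge), so there is no dichotomy to exploit; moreover your Gr\"onwall step fails in the presence of jumps, since the small-jump contribution to $\Im R_i(\psi)$ is controlled only by $\int_{\{|z|\le1\}}|z|^2|K^i|(\dd z)\,|\Im\psi|^2$ (one cannot use a bound linear in $|z|$ there, as only $\int(|z|^2\wedge|z|)|K^i|(\dd z)$ is finite), so the ODE for $\Im\psi$ has quadratic, not linear, growth.

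Second, and decisively, the degenerate case is where the entire proof lives and your treatment of it is not an argument. The zero set of $u\mapsto\E_x\exp(u^\top X_s)$ being a proper analytic subvariety does not prevent $w$ from lying in it for every $x$, and your appeal to Lemma~\ref{lem:continx} for continuity in $x$ is illegitimate for general $w\in S(M(T_0))$, since that lemma requires $\sup_{x\in E}\Re w^\top x<\infty$ (which fails for unbounded $E$ unless $\Re w$ is suitably signed). The paper's resolution needs three ingredients you do not supply: (a) a Jensen/Markov/Fatou estimate showing $\psi([0,\varepsilon+\delta],w)\subset S(M(T-\varepsilon))$; (b) the flow-composition identity-theorem argument: from $\E_x\exp(\psi(t+\varepsilon,w)^\top X_{T-\varepsilon})=0$ for $t\in[0,\delta]$ and analyticity of $z\mapsto\E_x\exp(\psi(z,w)^\top X_{T-\varepsilon})$ on a complex neighbourhood of $[0,\varepsilon+\delta]$, one propagates the zero back to $z=0$ and contradicts $\E_x\exp(w^\top X_{T-\varepsilon})\ne0$; and (c) the continuity of $x\mapsto\E_x\exp(w^\top X_T)$ for general $w\in S(M(T))$, which the paper obtains in two stages --- first for purely imaginary $w$ via Lemma~\ref{lem:continx} (where its hypothesis does hold), then for general $w$ via weak convergence, Skorohod representation and an extended dominated convergence theorem. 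Without these steps the contradiction is not established.
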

\begin{proof}
In view of Lemma~\ref{lem:analytic} it suffices to show
$S(M(T_0))\subset D_\C(T_0)$.
We argue by contradiction. Suppose there exists $u^\ast\in
S(M(T_0))\cap D_\C(T_0)^c$. We divide the proof into a couple of
steps. In the following we write $[0,u]$ for the line segment in
$\C^p$ with endpoints $0$ and $u$. For a function $f$ we write
$f([0,t])$ for the path $s\mapsto f(s)$, $s\in[0,t]$. Furthermore,
throughout we use that (\ref{eq:char}) holds for $u\in M(t)$,
which follows from Proposition~\ref{prop:Expinfty}.

\emph{Step 1.} There exists $u_0\in[0,u^\ast]$ such that
\begin{align*}
[0,u_0]&\subset S(M(t))\cap D_\C(t),\qquad\mbox{ for }t<T:=t_\infty(u_0),\\
[0,u_0)&\subset S(M(T))\cap D_\C(T).
\end{align*}
We prove this as follows. Since $S(M(T_0))$ is convex, the line
from $[0,u^\ast]$ is contained in $S(M(T_0))$. Define
\[
\lambda_0 = \inf\{\lambda\geq 0: \lambda u^\ast\not\in
D_\C(T_0)\}.
\]
Then $0<\lambda _0\leq 1$, since $D_\C(T_0)$ is an open
neighborhood of $0$, by
Proposition~\ref{prop:filipmayer}~(\ref{item:fm2}). Moreover,
$\lambda u^\ast\in D_\C(T_0)$ for $\lambda<\lambda_0$ and
$\lambda_0 u^\ast\not\in D_\C(T_0)$. Take $u_0=\lambda_0 u^\ast$,
$T=t_\infty(u_0)$. Note that $T\leq T_0$, so $D_\C(T_0)\subset
D_\C(T)$. Then by the previous we have $[0,u_0)\subset
D_\C(T_0)\subset D_\C(T)\subset D_\C(t)$, for $t<T$. Moreover,
$u_0\in D_\C(t)$ for $t<T=t_\infty(u_0)$.
This yields the assertion.

\emph{Step 2.} For all open $B\subset E^\circ$ there exists $x\in
B$ such that $\E_x\exp(u_0^\top X_{T})=0$. To see this, first note
that $\Re u_0\in M(T)\subset M(t)$ for $t\leq T$ and that
(\ref{eq:char}) holds for $u=\Re u_0$. Therefore,
\begin{align*}
\E_x\exp(\Re u_0 X_t)&=\exp(\psi_0(t,\Re u_0)+\psi(t,\Re u_0)^\top
x)\\&\rightarrow \exp(\psi_0(T,\Re u_0)+\psi(T,\Re u_0)^\top
x)=\E_x\exp(\Re u_0 X_{T})<\infty,
\end{align*}
for $t\uparrow T$, $x\in E$. By quasi-left continuity we have
$X_t\rightarrow X_{T}$, $\PP_x$-a.s. Since $|\exp(u_0^\top X_t)|$
is bounded by $\exp(\Re u_0 X_t)$ (indeed it is equal), an
extended version of the Dominated Convergence Theorem
\cite[Theorem 1.21]{Kallenberg} yields
\[
\lim_{t\uparrow T} \E_x \exp(u_0^\top X_t)=\E_x \exp(u_0^\top
X_{T}),
\]
for all $x\in E$. In particular
\[
\lim_{t\uparrow T}\exp(\psi_0(t,u_0)+\psi(t,u_0)^\top x)\mbox{
exists and is finite, for all }x\in E.
\]
Since $T=t_\infty(u_0)$, we have $\lim_{t\uparrow
T}|\psi(t,u_0)|=\infty$, by
Proposition~\ref{prop:filipmayer}~(\ref{item:fm1}). It follows
that for all open balls $B\subset E^\circ$ there exists $x\in B$
such that
\[
\E_x \exp(u_0^\top X_{T}) = 0,
\]
as otherwise $\lim_{t\uparrow T}(\psi_0(t,u_0)+\psi(t,u_0)^\top
x)$ would be finite on some ball $B$, which would give a finite
limit for $\psi(t,u_0)$, a contradiction.

\emph{Step 3.} Fix $0<\varepsilon<T$. There exists
$0<\delta<T-\varepsilon$ such that
\begin{align}\label{eq:pathinS}
\psi([0,\varepsilon+\delta],u_0)\subset S(M(T-\varepsilon)).
\end{align}
The proof is as follows. Step~1 together with
Lemma~\ref{lem:analytic} implies that (\ref{eq:char}) holds for
$u=u_0$ and $t<T$. Hence by Jensen's inequality and the Markov
property we have for $t<\varepsilon$, $x\in E$ that
\begin{align*}
\E_x \exp(\Re\psi_0(t,u_0)+\Re\psi(t,u_0) X_{T-\varepsilon}) &=
\E_x |
\exp(\psi_0(t,u_0)+\psi(t,u_0)^\top X_{T-\varepsilon})|\\
&=\E_x|\E_{X_{T-\varepsilon}} \exp(u_0^\top X_{t})|\\
&\leq \E_x \E_{X_{T-\varepsilon}}\exp(\Re u_0^\top
X_{t})\\
&= \E_x\exp(\Re u_0^\top X_{{T-\varepsilon}+t}).
\end{align*}
Since $\Re u_0\in M(T)\subset M(T-\varepsilon)$ it follows that
for $t<\varepsilon$, $x\in E$ we have
\begin{align*}
\E_x \exp(\Re\psi(t,u_0) X_{T-\varepsilon}) &\leq
\exp(-\Re\psi_0(t,u_0))\E_x\exp(\Re u_0^\top
X_{{T-\varepsilon}+t})\\&=\exp(-\Re\psi_0(t,
u_0)+\psi_0(T-\varepsilon+t,\Re
u_0)\\&\qquad\qquad\qquad\qquad\quad+\psi(T-\varepsilon+t,\Re
u_0)^\top x).
\end{align*}
Fatou's Lemma yields
\begin{align*}
\E_x \exp(\Re\psi(\varepsilon,u_0)
X_{T-\varepsilon})&\leq\liminf_{t\uparrow \varepsilon}\E_x
\exp(\Re\psi(t,u_0) X_{T-\varepsilon})\\&\leq
\exp(-\Re\psi_0(\varepsilon, u_0)+\psi_0(T,\Re u_0)+\psi(T,\Re
u_0)^\top x)\\&<\infty,
\end{align*}
for all $x\in E$. Hence $\psi([0,\varepsilon],u_0)\subset
S(M(T-\varepsilon))$. Since $S(M(T-\varepsilon))$ is open and
$t\mapsto \psi(t,u_0)$ is continuous on $[0,T)$, the result
follows.

\emph{Step 4.} It holds that $x\mapsto \E_x \exp(u_0^\top X_{T})$
is not continuous. To show this, we argue by contradiction and
assume it is continuous. Then we have $\E_x \exp(u_0^\top
X_{T})=0$ for all $x\in E$, by Step~2 and the fact that
$E=\overline{E^\circ}$. The Markov property gives
\begin{equation}\label{eq:0ismarkov}
\begin{split}
0&=\E_x \exp(u_0^\top X_{T})\\&=\E_x\E_{X_{{T}-t}}\exp(u_0^\top
X_{t})\\&=\E_x\exp(\psi_0(t,u_0)+\psi(t,u_0)^\top
X_{{T}-t}),\mbox{ for all } 0\leq t<T,x\in E,
\end{split}
\end{equation}
so $\E_x\exp(\psi(t,u_0)^\top X_{T-t})=0$ for all $0\leq t<T$,
$x\in E$. Fix $0<\varepsilon<T$ and write
$v=\psi(\varepsilon,u_0)$ and $s=T-\varepsilon$. By the semi-group
property of the flow we have $\psi(t,v)=\psi(t+\varepsilon,u_0)$
for $t<s$, whence the previous yields
\[
\E_x\exp(\psi(t,v)^\top X_{s-t})=0,\mbox{ for all $0\leq t<s$,
$x\in E$}.
\]
Let $\delta$ be as in Step~3. Then $\E_x \exp(\psi(t,v)^\top
X_{s})$ is well-defined for $t\leq \delta$, $x\in E$. Applying the
Markov property yields
\begin{equation*}
\begin{split}
\E_x\exp(\psi(t,v)^\top X_{s})&=\E_x\E_{X_t}\exp(\psi(t,v)^\top
X_{s-t}) =0,\mbox{ all $0\leq  t\leq \delta$, $x\in E$},
\end{split}
\end{equation*}
Plugging back $v=\psi(\varepsilon,u_0)$ and $s=T-\varepsilon$ and
using the semi-group property of the flow, we see that
\begin{align}\label{eq:zero}
\E_x\exp(\psi(t+\varepsilon,u_0)^\top X_{T-\varepsilon})=0,\mbox{
for all $0\leq t\leq\delta$, $x\in E$}.
\end{align}
Now fix $x\in E$. It holds that $u\mapsto \E_{x}\exp(u^\top
X_{T-\varepsilon})$ and $t\mapsto \psi(t,u_0)$ are analytic on
$S(M(T-\varepsilon))$ respectively $[0,T)$, see
Proposition~\ref{prop:filipmayer}~(\ref{item:fm2}) and
~(\ref{item:fm3}). Step~3 yields (\ref{eq:pathinS}). Therefore,
there exists an open domain $B\subset\C^p$ with
$[0,\varepsilon+\delta]\subset B$ such that $\psi(z,u_0)\in
S(M(T-\varepsilon))$ for $z\in B$. The composition of analytic
functions is analytic, whence
\[
z\mapsto \E_{x}\exp(\psi(z,u_0)^\top X_{T-\varepsilon})
\]
is analytic on $B$. Equation (\ref{eq:zero}) yields it is zero on
$[\varepsilon,\varepsilon+\delta]$, whence it is zero on the whole
of $B$, as $[\varepsilon,\varepsilon+\delta]$ is a set of
uniqueness for $B$. In particular it is zero for $z=0$, i.e.\
$\E_x\exp(u_0^\top X_{T-\varepsilon})=0$. However, by Step~1 and
Lemma~\ref{lem:analytic} we have
\[
\E_x\exp(u_0^\top
X_{T-\varepsilon})=\exp(\psi_0(T-\varepsilon,u_0)+\psi(T-\varepsilon,u_0)^\top
x )\neq 0,
\]
a contradiction.

\emph{Step 5.} It holds that $\ii\R^p\subset D_\C(T)$ and the
affine transform formula (\ref{eq:char}) holds for $u\in\ii\R^p$,
$t=T$. Indeed, if $u^\ast\in \ii\R^p$, then also $u_0\in\ii\R^p$,
as $u_0\in[0,u]$. Step~1 together with Lemma~\ref{lem:analytic}
yields (\ref{eq:char}) for $u=u_0$, $t<T$. However,
Lemma~\ref{lem:continx} then gives that
$x\mapsto\E_x(\exp(u_0^\top X_T))$ is continuous, which
contradicts Step 2. Hence $\ii\R^p\subset D_\C(T)$. By
Lemma~\ref{lem:analytic} again we get validity of (\ref{eq:char})
for $u\in\ii\R^p$, $t=T$.

\emph{Step 6.} We conclude the proof by showing that $x\mapsto
\E_x \exp(u^\top X_{T})$ is continuous for all $u\in S(M(T))$,
which contradicts Step 2. Let $x_n\rightarrow x$, some $x_n,x\in
E$. By Step 5 we have for all $u\in\ii\R^p$ that
\begin{align*}
\E_{x_n}\exp(u X_T)&=\exp(\psi_0(T,u)+\psi(T,u)^\top
x_n)\\&\rightarrow \exp(\psi_0(T,u)+\psi(T,u)^\top x)=
\E_{x}\exp(u X_T),
\end{align*}
as $n\rightarrow\infty$. Hence $\PP_{x_n}\circ
X_T^{-1}\rightarrow\PP_x\circ X_T^{-1}$ weakly. By Skorohod's
Representation Theorem \cite[Theorem~4.30]{Kallenberg} there exist
random variables $Y_n$, $Y$ defined on a common probability space
$(\Omega,\mathcal{F},P)$ such that $P\circ Y_n^{-1}=\PP_{x_n}\circ
X_T^{-1}$, $P\circ Y^{-1}=\PP_{x}\circ X_T^{-1}$ and
$Y_n\rightarrow Y$, $P$-a.s. Now let $u\in S(M(T))$ be arbitrary.
It holds that $|\exp(u^\top Y_n)|= \exp(\Re u^\top Y_n)$ and
\begin{align*}
\int \exp(\Re u^\top  Y_n)\dd P&= \exp(\psi_0(T,\Re u)+\psi(T,\Re
u)^\top {x_n})\\&\rightarrow \exp(\psi_0(T,\Re u)+\psi(T,\Re
u)^\top x)=\int \exp(\Re u^\top  Y)\dd P,
\end{align*}
for $n\rightarrow\infty$, since $\Re u\in M(T)$. An extended
version of the Dominated Convergence Theorem \cite[Theorem
1.21]{Kallenberg} yields
\begin{align*}
\E_{x_n}\exp(u^\top X_T)&= \int \exp(u^\top Y_n)\dd
P\\&\rightarrow\int \exp(u^\top  Y)\dd P=\E_{x}\exp(u^\top X_T),
\end{align*}
for $n\rightarrow\infty$, whence $x\mapsto \E_x \exp(u^\top
X_{T})$ is continuous.
\end{proof}

\section{Additional results for bounded exponentials}\label{sec:boundedexp}

In this section we relax condition (\ref{eq:exponentialmom}) of
Theorem~\ref{th:maintheorem} on the exponential moments of the
$K^i$ and consider the validity of the affine transform formula
when the left-hand side of (\ref{eq:char}) is uniformly bounded in
$t$ and $x$ (which includes the characteristic function). The
following theorem is the third main result of this paper.
\begin{theorem}\label{th:charexplo}
Suppose $E\subset\R^p$ is closed convex with non-empty interior
and let $X$ be an affine jump-diffusion on
$(D_E(0,\infty],\mathcal{F}^X,(\mathcal{F}^X_{t+}),\PP)$ with
differential characteristics $(b(X),c(X),K(X,\dd z))$ given by
(\ref{eq:bcK}). Assume (\ref{eq:z2Kfinite}) and write
 $U=\{u\in \C^p:\sup_{x\in E}\Re u^\top x<\infty\}$. Then for
all $u\in U$ there exists a $t_\infty(u)\in(0,\infty]$ and a
solution
$(\psi_0(\cdot,u),\psi(\cdot,u)):[0,t_\infty(u))\rightarrow
\C\times\C^p$ to the system of generalized Riccati equations given
by (\ref{eq:riccati}) and for all $x\in E$ it holds that
\[
\E_x\exp(u^\top X_t)=\begin{cases}\exp(\psi_0(t,u)+\psi(t,u)^\top
x),&\quad  t \in [0, t_\infty(u))\\
0,&\quad t \in [t_\infty(u),\infty)
\end{cases}
\]
\end{theorem}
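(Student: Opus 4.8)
The plan is to reduce to Theorem~\ref{th:maintheorem} by truncating the large jumps, and then to pass to the limit $n\to\infty$, using Lemma~\ref{lem:continx} to recover the Riccati equations and an argument in the spirit of Proposition~\ref{prop:complexexp} to handle the explosion. Two structural consequences of $u\in U$ drive everything. Since $nF+E\subset E$ for all $n$, the condition $\sup_{x\in E}\Re u^\top x<\infty$ forces $\Re u^\top z\le 0$ for $z\in F$, so $|e^{u^\top z}|\le 1$ on the support of each $K^i$; with (\ref{eq:z2Kfinite}) this makes $R_i(y)$ in (\ref{eq:Ri}) well-defined for every $y\in U$. Writing $c:=\sup_{x\in E}\Re u^\top x$, we also get $\Re u^\top X_t\le c$, so that $|\E_x\exp(u^\top X_t)|\le e^{c}$ is uniformly bounded --- the feature that makes the left-hand side of (\ref{eq:char}) always finite.

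For $n\in\N$ I would introduce the affine jump-diffusion $X^n$ with the same $(a^i,A^i)$ but with truncated kernels $K^{i,n}(\dd z)=1_{\{|z|\le n\}}K^i(\dd z)$. As the removed jumps are bounded away from $0$, truncation leaves the diffusion and small-jump structure, and hence the boundary behaviour, untouched, so $X^n$ is again an affine jump-diffusion on $E$; and since $K^{i,n}$ has bounded support, (\ref{eq:exponentialmom}) holds for $X^n$. Now $\Re u$ lies in $M^n(t):=\{v\in\R^p:\E_x\exp(v^\top X^n_t)<\infty\ \forall x\}$ for every $t$, because $\Re u^\top X^n_t\le c$; hence Theorem~\ref{th:maintheorem} applies and yields, with no explosion, $\E_x\exp(u^\top X^n_t)=\exp(\psi_0^n(t,u)+\psi^n(t,u)^\top x)$ for all $t\ge0$, $x\in E$. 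Boundedness of the left-hand side forces $\psi^n(t,u)\in U$.

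The limit is where the analytic and probabilistic sides meet. Analytically, $R_i(y)-R_i^n(y)=\int_{\{|z|>n\}}(e^{y^\top z}-1-y^\top z)K^i(\dd z)\to 0$ locally uniformly on $U$ (using $|e^{y^\top z}|\le1$ and $\int|z|^2|K^i|<\infty$), so by continuous dependence on the vector field the solutions $(\psi_0^n,\psi^n)$ converge, uniformly on compacta, to the maximal solution $(\psi_0,\psi)$ of (\ref{eq:riccati}), defined on $[0,t_\infty(u))$ with $t_\infty(u)$ its explosion time. Probabilistically, I would show $\E_x\exp(u^\top X^n_t)\to\E_x\exp(u^\top X_t)$: one has $\mathcal{A}^nf\to\mathcal{A}f$ locally uniformly for $f\in C_c^\infty(E)$, the laws $\PP_x^n$ are tight by the moment bound of Lemma~\ref{lem:sup} (uniform in $n$, since truncation only diminishes the jumps), every weak limit solves the well-posed martingale problem for $\mathcal{A}$, and $\exp(u^\top\cdot)$ is bounded and continuous. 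Combining the two, for $t<t_\infty(u)$ we obtain $\E_x\exp(u^\top X_t)=\exp(\psi_0(t,u)+\psi(t,u)^\top x)$, and Lemma~\ref{lem:continx}, applicable because $\exp(\psi_0)\ne0$, confirms that $(\psi_0,\psi)$ solve the Riccati system and that $x\mapsto\E_x\exp(u^\top X_{t_\infty(u)})$ is continuous.

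It remains, when $t_\infty(u)<\infty$, to prove $\E_x\exp(u^\top X_t)=0$ for $t\ge t_\infty(u)$; this is the step I expect to be most delicate and it reuses the mechanism of Proposition~\ref{prop:complexexp}, Step~2. As $t\uparrow t_\infty(u)$, quasi-left-continuity and the bound $|\exp(u^\top X_t)|\le e^{c}$ give $\E_x\exp(u^\top X_t)\to\E_x\exp(u^\top X_{t_\infty(u)})$, while $\|\psi(t,u)\|\to\infty$. If this limit were nonzero on an open ball $B\subset E^\circ$, then, since $t\mapsto\psi(t,u)$ is continuous, $\psi_0(t,u)+\psi(t,u)^\top x$ would converge to a finite value for all $x\in B$, forcing $\psi(t,u)$ itself to converge --- a contradiction. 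So every ball in $E^\circ$ contains a zero, and continuity together with $E=\overline{E^\circ}$ upgrades this to $\E_x\exp(u^\top X_{t_\infty(u)})=0$ for all $x\in E$; the Markov property (Remark~\ref{remark}, part~2) then propagates the zero, $\E_x\exp(u^\top X_t)=\E_x(\E_{X_{t-t_\infty(u)}}\exp(u^\top X_{t_\infty(u)}))=0$ for $t\ge t_\infty(u)$. The principal obstacle throughout is the truncation limit: verifying that each $X^n$ is a genuine well-posed affine jump-diffusion on $E$ and that $\PP_x^n\circ X_t^{-1}\Rightarrow\PP_x\circ X_t^{-1}$ with $n$-uniform moment bounds. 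A route avoiding the construction of $X^n$ is to run the full Riccati flow on $U$ and apply Theorem~\ref{th:ricexists} directly on $[0,t_\infty(u))$ for real $u$ (extending to complex $u$ by analyticity as in Proposition~\ref{prop:complexexp}); there the delicate point becomes showing that the flow cannot leave $U$ without exploding.
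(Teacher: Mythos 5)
Your overall strategy coincides with the paper's: approximate the jump kernels so that (\ref{eq:exponentialmom}) holds, apply Theorem~\ref{th:maintheorem} to the approximating processes, pass to the limit, recover the Riccati equations via Lemma~\ref{lem:continx}, and handle the case where the transform vanishes by the explosion of $\psi$ together with the Markov property. The decisive difference is the choice of approximation, and this is where your proposal has a genuine gap. You truncate hard, $K^{i,n}(\dd z)=1_{\{|z|\le n\}}K^i(\dd z)$ with the drift unchanged, and then need two nontrivial facts: (a) that the truncated affine martingale problem is well-posed (so that Theorem~\ref{th:maintheorem} is even applicable to $X^n$), and (b) that $\E^n_x\exp(u^\top X_t)\to\E_x\exp(u^\top X_t)$. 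You propose to get (b) from tightness and weak convergence of solutions of martingale problems, and you yourself flag both points as ``the principal obstacle''; neither is supplied. Note in particular that the paper's own tool for producing well-posed perturbed problems, Proposition~\ref{prop:Lismart} (via Theorem~\ref{th:expmart}), requires $w:E\times F\to(-1,\infty)$, whereas a hard truncation corresponds to $w=-1_{\{|z|>n\}}$, which takes the forbidden value $-1$. So the Girsanov machinery of Section~\ref{sec:prelim} cannot be invoked for your $X^n$, and well-posedness of the truncated problem does not follow from anything established in the paper.

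The paper circumvents exactly this by damping rather than truncating: it sets $K^n(x,\dd z)=e^{-|z|^2/n}K(x,\dd z)$ and $b^n(x)=b(x)+\int z(e^{-|z|^2/n}-1)K(x,\dd z)$, so that $w=e^{-|z|^2/n}-1\in(-1,0]$ and Proposition~\ref{prop:Lismart} applies. This yields in one stroke the well-posedness of the approximating martingale problems \emph{and} the explicit density $L^n_t=\mathcal{E}((e^{-|z|^2/n}-1)\ast(\mu^X-\nu^X))_t$ relating $\Q^n_x$ to $\PP_x$, whence $\E^n_x\exp(u^\top X_t)=\E_x\exp(u^\top X_t)L^n_t\to\E_x\exp(u^\top X_t)$ by dominated convergence ($\E_x L^n_t=1$, $L^n_t\to1$) --- no tightness or weak-convergence argument is needed. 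Your treatment of the degenerate case $t\ge t_\infty(u)$ (zeros dense in $E^\circ$, continuity from Lemma~\ref{lem:continx}, propagation by the Markov property) matches the paper's, though the paper additionally runs a coordinate-perturbation argument at the level of the $\psi^n$ to rule out the transform vanishing at some $x_0$ while remaining nonzero elsewhere. To repair your proof, the cleanest fix is simply to replace the indicator cutoff by a strictly positive damping factor so that the paper's change-of-measure apparatus applies.
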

\begin{proof}
For $n\in\N$ define
\[
b^n(x)=b(x)+\int z(e^{-\frac{1}{n}|z|^2}-1)K(x,\dd z),\quad
K^n(x,\dd z)=e^{-\frac{1}{n}|z|^2}K(x,\dd z),
\]
and the operator $\mathcal{A}^n:C^\infty_c(E)\rightarrow C_0(E)$
by
\begin{align*}
\mathcal{A}^nf(x)&=\nabla f(x) b^n(x)+\half\tr(\nabla^2
f(x)c(x))\\&+\int (f(x+z)-f(x)-\nabla f(x) z)K^n(x,\dd z).
\end{align*}
Then the affine martingale problem for $\mathcal{A}^n$ is
well-posed by Proposition~\ref{prop:Lismart}. Let $\Q^n_x$ be the
solution for $(\mathcal{A}^n,\delta_x)$ and write $\E^n_x$ for the
expectation with respect to $\Q^n_x$. Since $K^n$ satisfies
(\ref{eq:exponentialmom}), Theorem~\ref{th:maintheorem} yields
\[
\E^n_x\exp(u^\top X_t)=\exp(\psi^n_0(t,u)+\psi^n(t,u)^\top x),
\]
for all $u\in U$, $x\in E$, $t\geq 0$, where $(\psi_0^n,\psi^n)$
satisfies (\ref{eq:riccati}) with $b$ and $K$ replaced by $b^n$
and $K^n$. Fix $x\in E$ arbitrarily and let (\ref{eq:decompx}) be
the decomposition of $X$ under $\PP_x$. By
Proposition~\ref{prop:Lismart} it holds that
$\left.\Q^n_x\right|_{\mathcal{F}^X_{t+}}=L^n_t\cdot\left.\PP_x\right|_{\mathcal{F}^X_{t+}}$
for all $t\geq0$, where
\[
L^n = \mathcal{E}((e^{-\frac{1}{n}|z|^2}-1)\ast
(\mu^X-\nu^X))=\exp((1-\textstyle e^{-\frac{1}{n}|z|^2})\ast
\nu^X_t -\frac{1}{n}|z|^2\ast\mu^X_t).
\]
For all $u\in U$ there is a constant $C>0$ such that $|\exp(u^\top
X_t)L_t^n|\leq C L_t^n$. Since $\E_x L_t^n=1$ for all $n$ and
$\lim_{n\rightarrow\infty} L^n_t=1$, an extended version of the
Dominated Convergence Theorem \cite[Theorem 1.21]{Kallenberg}
yields
\[
\lim_{n\rightarrow\infty}\E^n_x\exp(u^\top
X_t)=\lim_{n\rightarrow\infty}\E_x\exp(u^\top
X_t)L_t^n=\E_x\exp(u^\top X_t),
\]
for all $t\geq0$, $u\in U$. Since $x\in E$ was taken arbitrarily,
this yields
\[
\E_x\exp(u^\top
X_t)=\lim_{n\rightarrow\infty}\exp(\psi^n_0(t,u)+\psi^n(t,u)^\top
x),
\]
for all $u\in U$, $x\in E$, $t\geq 0$. If $\E_x\exp(u^\top
X_t)\neq 0$ for all $u\in U$, $x\in E$, $t\geq 0$, then
$\lim_{n\rightarrow\infty}\psi^n_0(t,u)$ and
$\lim_{n\rightarrow\infty}\psi^n(t,u)$ exist and are finite for
all $t\geq0$, $u\in U$, and the result follows from
Lemma~\ref{lem:continx}.

Suppose $\E_{x_0}\exp(u^\top X_T)=0$ for some $u\in U$, $T>0$,
$x_0\in E$. We first show that then $\E_x\exp(u^\top X_T)=0$ for
all $x\in E^\circ$. If
$\limsup_{n\rightarrow\infty}|\Re\psi^n(T,u)|<\infty$, then
necessarily $\limsup_{n\rightarrow\infty}\Re\psi^n_0(T,u)=-\infty$
and the assertion follows immediately. Otherwise, there exists a
subsequence of $\psi^n$ (also denoted by $\psi^n$) and an
$i\in\{1,\ldots, p\}$ such that
\[
\lim_{n\rightarrow\infty}\Re\psi_i^n(T,u)=\pm\infty.
\]
Then if there exists $x\in E^\circ$ such that
\[
\liminf_{n\rightarrow\infty}(\Re\psi^n_0(T,u)+\Re\psi^n(T,u)^\top
x)>-\infty,
\]
then $y$ with $y_j=x_j$ for $j\neq i$ and $y_i=x_i\pm\varepsilon$
for some small $\varepsilon>0$ satisfies
\[
\liminf_{n\rightarrow\infty}(\Re\psi^n_0(T,u)+\Re\psi^n(T,u)^\top
y)=\infty.
\]
This is impossible, since $\E_y\exp(u^\top X_T)$ is finite.

Thus $\E_x\exp(u^\top X_T)=0$ for all $x\in E^\circ$. Let
$t_\infty(u)$ be given by
\[
t_\infty(u)=\inf\{t\geq0:\E_x\exp(u^\top X_t)=0\mbox{ for some
}x\in E \}.
\]
Then $t_\infty(u)>0$. Indeed, otherwise for all $t>0$ there exists
$x\in E$ and $s<t$ such that $\E_x\exp(u^\top X_s)=0$. But then
for all $t>0$ there exists $s<t$ such that $\E_x\exp(u^\top
X_s)=0$ for all $x\in E^\circ$, in view of the previous.
Right-continuity of $t\mapsto X_t$ in $0$ yields $\exp(u^\top
x)=0$ for all $x\in E^\circ$, which is absurd.

Note that $\E_x\exp(u^\top X_{t_\infty(u)})=0$ for all $x\in
E^\circ$, as $X$ is right-continuous.  For $t<t_\infty(u)$ we have
existence of finite limits for $\psi^n_0(t,u)$ and $\psi^n(t,u)$.
Lemma~\ref{lem:continx} yields (\ref{eq:char}) where
$(\psi_0,\psi)$ are solutions to the generalized Riccati equations
for $t<t_\infty(u)$. In addition it implies that $x\mapsto
\E_x\exp(u^\top X_{t_\infty(u)})$ is continuous, whence we have
$\E_x\exp(u^\top X_{t_\infty(u)})=0$ for all $x\in E$. Applying
the Markov property we see that for $t\geq t_\infty(u)$ it holds
that
\[
\E_x\exp(u^\top X_{t})=\E_x\E_{X_{t-t_\infty(u)}}\exp(u^\top
X_{t_\infty(u)})=0,
\]
which concludes the proof.
\end{proof}
Under analyticity of the Riccati functions $R_i$, we can sharpen
the assertion in Theorem~\ref{th:charexplo}.
\begin{theorem}\label{th:BsupsetU}
Consider the situation of Theorem~\ref{th:charexplo}. Assume there
exists an open domain $B\supset U$ such that
\[
\int_{\{|z|>1\}} e^{k^\top z}|K|^i(\dd z)<\infty,\mbox{ for all
}k\in  B\cap\R^p,i=0,\ldots,p.
\]
Then $t_\infty(u)=\infty$ and (\ref{eq:char}) holds for all $u\in
U$, $t\geq0$. In particular, $X$ is a regular affine process.
\end{theorem}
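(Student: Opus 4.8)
The plan is to prove that $t_\infty(u)=\infty$ for every $u\in U$. Granting this, the lower branch in the formula of Theorem~\ref{th:charexplo} never occurs, so (\ref{eq:char}) holds for all $u\in U$ and all $t\ge0$ with $(\psi_0,\psi)$ solving (\ref{eq:riccati}); since $\Re(\ii\ell)^\top x=0$ we have $\ii\R^p\subset U$, whence the characteristic function is exponentially affine with analytic (hence $C^1$) dependence on $t$, and together with the Markov property furnished by well-posedness this is exactly the assertion that $X$ is a regular affine process. By Theorem~\ref{th:charexplo} it therefore suffices to show that $\E_x\exp(u^\top X_t)\neq0$ for all $u\in U$, $x\in E$ and $t\ge0$.

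I would first record two facts. For real $k\in U\cap\R^p$ the pathwise bound $\exp(k^\top X_t)\le\exp(\sup_{x'\in E}k^\top x')$ holds under every measure, so $\E_x\exp(k^\top X_t)$ is finite and strictly positive; hence $t_\infty(k)=\infty$, the real solution $\psi(\cdot,k)$ of (\ref{eq:riccati}) is global, and taking the supremum over $x\in E$ in the affine formula yields $\psi(t,k)\in U\cap\R^p$ for all $t$, i.e.\ $U\cap\R^p$ is invariant under the real Riccati flow. Secondly, the standing hypothesis is used precisely to make each $R_i$ analytic on the open strip $\mathcal{S}:=\{y\in\C^p:\Re y\in B\cap\R^p\}\supset U$, by Proposition~\ref{prop:filipmayer}~(\ref{item:fm3}).

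The engine for the complex directions is Hurwitz's theorem (applied, after restriction to complex lines, in its one-variable form). Re-using the approximating processes from the proof of Theorem~\ref{th:charexplo}, the laws $\Q^n_x$ with kernels $K^n=e^{-|z|^2/n}K$ satisfy (\ref{eq:exponentialmom}), so Theorem~\ref{th:maintheorem} applies to each; the bound above gives $U\cap\R^p\subset M^n(t)$ and hence $U=S(U\cap\R^p)\subset D^n_\C(t)$, so that $f_n(u):=\E^n_x\exp(u^\top X_t)=\exp(\psi^n_0(t,u)+\psi^n(t,u)^\top x)$ is analytic and nowhere vanishing on $D^n_\C(t)\supset U$. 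Since $f_n(u)\to\E_x\exp(u^\top X_t)=:f(u)$ on $U$ and $f$ is strictly positive on $U\cap\R^p$, the limit is not identically zero. If I can produce a connected open set $V$ with $U\subset V\subset\mathcal{S}$ on which the $f_n$ are eventually defined, analytic, nowhere zero and converge locally uniformly to $f$, then Hurwitz's theorem forces $f$ to be nowhere zero on $V$, giving $\E_x\exp(u^\top X_t)\neq0$ and hence $t_\infty(u)=\infty$.

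The hard part is the construction of $V$ and the local uniform convergence, and this is where the openness of $B$ is essential. The difficulty is that $U$ itself need not be open: when $E$ is unbounded the barrier cone $U\cap\R^p$ has $0$ on its boundary, so the set $\ii\R^p$ that carries the characteristic function lies in $\partial U$. Using the modulus bound $|f_n(u)|\le\E^n_x\exp(\Re u^\top X_t)$, the problem reduces to a uniform-in-$n$ non-explosion estimate for the real approximating flows $\psi^n(\cdot,k')$ as $k'$ ranges over an open real neighbourhood $O$ of $U\cap\R^p$. I would obtain this by combining the invariance of $U\cap\R^p$ from the second paragraph with continuous dependence on initial data: the trajectory $\{\psi(t,k):t\in[0,T]\}$ of the limiting flow is a compact subset of the open set $B\cap\R^p$, so nearby limiting trajectories $\psi(\cdot,k')$, and, since $R^n\to R$ locally uniformly on $\mathcal{S}$ together with their derivatives, the approximating trajectories $\psi^n(\cdot,k')$, all remain in $B\cap\R^p$ on $[0,T]$ for $n$ large. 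This provides the required uniform bounds on $\E^n_x\exp(k'^\top X_t)$, and via Montel's theorem (the pointwise limit on the uniqueness set $\ii\R^p\subset U$ pinning down the normal limit) it upgrades $f_n\to f$ to local uniform convergence on $V:=\{u:\Re u\in O\}$, which closes the argument.
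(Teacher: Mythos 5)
Your proposal is correct in substance, but it takes a genuinely different route from the paper. The paper argues locally in time by contradiction: assuming $T:=t_\infty(u_0)<\infty$, it uses the Markov property to get $\E_x\exp(\psi(t,u_0)^\top X_{T-t})=0$ for $t<T$, then uses the hypothesis on $B$ only to make $R_i$ analytic on a tube over $B\cap\R^p$, whence $t\mapsto\psi(t,u_0)$ is analytic on $[0,T)$ and $u\mapsto\E_x\exp(u^\top X_{T-\varepsilon})$ is analytic on an open set containing the path $\psi([0,\varepsilon+\delta],u_0)$; the composite is then an analytic function of $t$ vanishing on an interval, hence at $t=0$, contradicting $T-\varepsilon<t_\infty(u_0)$. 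You instead keep the Gaussian-damped approximations $\Q^n_x$ global and deduce non-vanishing of the limit from Hurwitz's theorem, after upgrading the pointwise convergence $f_n\to f$ on $U$ (already available from the proof of Theorem~\ref{th:charexplo}) to normal convergence on an open tube $S(O)$. Both proofs hinge on the same two facts --- invariance of $U\cap\R^p$ under the real Riccati flow and analyticity of the $R_i$ on the tube over $B\cap\R^p$ --- but they deploy them differently: the paper to propagate a zero backwards along one complex trajectory, you to run a uniform-in-$n$ ODE stability estimate on a neighbourhood of $U\cap\R^p$. What the paper's route buys is economy: it recycles Steps 1--4 of Proposition~\ref{prop:complexexp} verbatim and needs no statement about the approximating flows beyond what Theorem~\ref{th:charexplo} already provides. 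What your route buys is conceptual transparency (locally uniform limits of zero-free holomorphic functions are zero-free or identically zero), at the cost of three additional technical inputs you would have to write out carefully: the uniform-in-$n$ non-explosion of $\psi^n(\cdot,k')$ for $k'$ in an open neighbourhood of the compact trajectory $\psi([0,t],k)\subset B\cap\R^p$ (the analogue of the paper's appeal to \cite[Theorems 7.6 and 8.3]{amann90}, but now with a perturbed vector field), the several-variable Montel/Vitali argument with $\ii\R^p$ as the uniqueness set pinning down the normal limit, and the several-variable form of Hurwitz's theorem. None of these is a gap --- each is standard and your sketch identifies correctly where the openness of $B$ is consumed --- but they make your proof noticeably longer than the one in the paper.
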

\begin{proof}
We argue as in Proposition~\ref{prop:complexexp}, Step~4. Let
$u_0\in U$ and suppose $T:=t_\infty(u_0)<\infty$. For $t<T$,
$u=u_0$ we have (\ref{eq:char}), which implies that
$\psi(t,u_0)\in U$, as $\E_x\exp(u_0^\top X_t)$ is bounded in $x$.
Similar as in (\ref{eq:0ismarkov}) we deduce that
$\E_x\exp(\psi(t,u_0)^\top X_{T-t})=0$ for all $0\leq t<T$, $x\in
E$. Fix $0<\varepsilon<T$ and write $v=\psi(\varepsilon,u_0)$ and
$s=T-\varepsilon$. We have $\psi(t,v)\in U$, so
$\E_x\exp(\psi(t,v)X_s)$ is well-defined for $t<s$, $x\in E$. By
the same argument as in Step~4 of
Proposition~\ref{prop:complexexp}, we get (\ref{eq:zero}), with
$\delta<T-\varepsilon$. Since $\psi(t,u_0)\in U\subset B$ for all
$t<T$ and $R_i$ given by (\ref{eq:Ri}) is analytic on $B$, it
follows by standard ODE results (e.g.\ \cite[Theorem
10.4.5]{dieu69}) that $t\mapsto\psi(t,u_0)$ is analytic on
$[0,T)$. Moreover, for all $u\in B$ there exists a solution
$(\psi_0,\psi)$ to (\ref{eq:riccati}) on a non-empty interval
$[0,t_\infty(u))$ with
\[
t_\infty(u)=\lim_{n\rightarrow\infty}\inf\{t\geq 0:\psi(t,u)\in
\partial B\mbox{ or }|\psi(t,u)|\geq n\},
\]
and
\[
D(t)=\{u\in B:t<t_\infty(u)\}
\]
is an open set containing $U$, for all $t\geq0$, see
\cite[Theorems 7.6 and 8.3]{amann90}. Theorem~\ref{th:ricexists}
implies that (\ref{eq:char}) holds for $u\in D(t)\cap\R^p$ for all
$t\geq0$. By Proposition~\ref{prop:filipmayer}~(\ref{item:fm3}) we
obtain that $u\mapsto \E_x\exp(u^\top X_t)$ is analytic on $U$ for
$x\in E$, for all $t\geq0$. It follows that
$\E_x\exp(\psi(t,u_0)^\top X_{T-\varepsilon})$ is analytic in $t$.
Since it is zero on $[\varepsilon,\varepsilon+\delta]$, it is zero
everywhere, in particular it is zero at $t=0$. This contradicts
the fact that $T-\varepsilon<t_\infty(u_0)$.
\end{proof}

\subsection{Infinite divisibility}\label{subsec:infdiv}


As a corollary of Theorem~\ref{th:charexplo} we obtain a
sufficient criterium for infinite divisibility of an affine
jump-diffusion with a general closed convex state space.
\begin{theorem}
Consider the situation of Theorem~\ref{th:charexplo}. Suppose for
all $n\in\N$ it holds that
\begin{align}\label{eq:admis}
(\textstyle a^i,nA^i,\frac{1}{n}K^i(\frac{1}{n}\dd z))_{0\leq
i\leq p}
\end{align}
is an admissible parameter set. Then $\PP_x\circ X_t^{-1}$ is
infinitely divisible for all $t\geq 0$, $x\in E$. Consequently,
$t_\infty(u)=\infty$ and (\ref{eq:char}) holds for all $u\in U$,
$t\geq0$.
\end{theorem}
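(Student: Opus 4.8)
The plan is to show, for each fixed $t\geq0$ and $x\in E$, that $\PP_x\circ X_t^{-1}$ is an $n$-fold convolution for every $n$, by comparing $X$ with the affine jump-diffusion $X^{(n)}$ on $D_E$ governed by the scaled parameter set (\ref{eq:admis}); the non-vanishing of $\E_x\exp(u^\top X_t)$, and hence $t_\infty(u)=\infty$, will then follow from standard facts about infinitely divisible laws. Denote by $\Q^n_x$ the law of $X^{(n)}$, by $\E^n_x$ the corresponding expectation, and by $(\psi^n_0,\psi^n)$ the solution of (\ref{eq:riccati}) for the scaled parameters. Since $\int|z|^2\,\tfrac1n K^i(\tfrac1n\dd z)=n\int|w|^2K^i(\dd w)<\infty$, condition (\ref{eq:z2Kfinite}) holds for $X^{(n)}$ and Theorem~\ref{th:charexplo} applies to it.

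The first technical ingredient is a scaling relation between the two Riccati systems. Writing $R^n_i$ for the maps (\ref{eq:Ri}) built from $(a^i,nA^i,\tfrac1n K^i(\tfrac1n\dd z))$ and using $\int g(z)\,K^i(\tfrac1n\dd z)=\int g(nw)\,K^i(\dd w)$, a direct computation gives $R_i(y)=nR^n_i(y/n)$ for all $y$ and all $i=0,\ldots,p$. Consequently $t\mapsto n\psi^n(t,u/n)$ solves (\ref{eq:riccati}) with initial condition $u$, so by uniqueness
\[
\psi(t,u)=n\,\psi^n(t,u/n),\qquad \psi_0(t,u)=n\,\psi^n_0(t,u/n),
\]
on the common interval of existence.

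The heart of the argument is the convolution identity
\[
\E_x\exp(\ii\theta^\top X_t)=\big(\E^n_x\exp(\ii\theta^\top X_t/n)\big)^n,\qquad \theta\in\R^p,\ t\geq0,\ x\in E,
\]
which, once established, shows that $\PP_x\circ X_t^{-1}=\big(\Q^n_x\circ(X_t/n)^{-1}\big)^{*n}$ and hence that $\PP_x\circ X_t^{-1}$ is infinitely divisible. On the interval $[0,\min\{t_\infty(\ii\theta),t^n_\infty(\ii\theta/n)\})$ the identity is immediate from Theorem~\ref{th:charexplo} applied to both processes together with the scaling relation, since there $\E^n_x\exp(\ii\theta^\top X_t/n)=\exp(\psi^n_0(t,\ii\theta/n)+\psi^n(t,\ii\theta/n)^\top x)$ and the exponent equals $\tfrac1n(\psi_0(t,\ii\theta)+\psi(t,\ii\theta)^\top x)$. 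The step I expect to be the main obstacle is to push the identity beyond the explosion time, i.e.\ to show that the explosion times $t_\infty(\ii\theta)$ and $t^n_\infty(\ii\theta/n)$ coincide and that both sides vanish simultaneously afterwards. For this I would use that, by the dichotomy in Theorem~\ref{th:charexplo}, each of $t\mapsto\E_x\exp(\ii\theta^\top X_t)$ and $t\mapsto\E^n_x\exp(\ii\theta^\top X_t/n)$ is non-zero exactly on $[0,t_\infty)$ and vanishes identically thereafter, and that both are continuous in $t$ by right-continuity and quasi-left-continuity of $X$. Letting $t$ approach the smaller of the two explosion times and invoking continuity then produces a contradiction unless the two times agree, after which both sides equal $0$.

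Finally I would deduce the remaining assertions. A characteristic function of an infinitely divisible law has no zeros, so $\E_x\exp(\ii\theta^\top X_t)\neq0$ for all $\theta$, $t$, $x$. For arbitrary $u\in U$ put $a=\Re u$; since $\sup_{x\in E}a^\top x<\infty$ and $X_t\in E$, the moment $\E_x\exp(a^\top X_t)$ is bounded, and the Esscher transform $\nu(\dd y)\propto e^{a^\top y}\,\PP_x\circ X_t^{-1}(\dd y)$ is again infinitely divisible, its $n$-th convolution roots being the normalised exponential tilts of the $\Q^n_x\circ(X_t/n)^{-1}$. Hence the characteristic function $\theta\mapsto\E_x\exp((a+\ii\theta)^\top X_t)$ has no zeros, and taking $\theta=\Im u$ gives $\E_x\exp(u^\top X_t)\neq0$ for every $u\in U$, $t\geq0$, $x\in E$. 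By the definition of $t_\infty$ in Theorem~\ref{th:charexplo} this forces $t_\infty(u)=\infty$, and the same theorem then yields the validity of (\ref{eq:char}) for all $u\in U$ and $t\geq0$.
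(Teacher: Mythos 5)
Your proposal is correct and follows essentially the same route as the paper: compare $X$ with the affine jump-diffusion for the scaled parameter set (\ref{eq:admis}), use the Riccati scaling $\psi^n(t,u/n)=\tfrac1n\psi(t,u)$ to exhibit the characteristic function as an $n$-th power, and conclude non-vanishing (hence $t_\infty(u)=\infty$) from infinite divisibility. You are in fact more explicit than the paper on two points it glosses over -- the correct form of the convolution identity (the root is the law of $X_t/n$ under $\Q^n_x$, matching the initial condition $u/n$) and the Esscher-tilt argument extending the non-vanishing from $\ii\R^p$ to all of $U$ -- both of which are sound.
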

\begin{proof}
Let $(\psi_0,\psi)$ be the solution to the Riccati equations as
given in Theorem~\ref{th:charexplo}. Define
$\psi^n_i=\frac{1}{n}\psi_i$, for $i=0,\ldots,p$. Then
$(\psi^n_0,\psi^n)$ solve the system of Riccati equations
corresponding to an affine jump-diffusion with parameter set
(\ref{eq:admis}). Let $\PP^n_x$ be the solution of the associated
affine martingale problem with initial condition $\delta_x$ and
write $\E^n_x$ for the expectation with respect to this
probability measure. From Theorem~\ref{th:charexplo} it follows
that
\[
(\E_x \exp(u X_t))^{1/n}=\E_x^n \exp(u X_t),
\]
for all $x\in E$, $u\in U$. In particular it holds for
$u\in\ii\R^p$, which yields the result.
\end{proof}

\subsection{Self-dual cone}\label{subsec:selfdual}


We can strengthen the conditions of Theorem~\ref{th:BsupsetU} in
case $E$ is a \emph{self-dual cone}. Recall that $E$ is a
self-dual cone with respect to an inner product
$\langle\cdot,\cdot\rangle$ if
\[
E=\{x\in\R^p:  \langle x, y\rangle \geq 0 \mbox{ for all }y\in
E\}.
\]
In that case we also have
\[
E^\circ=\{x\in\R^p: \langle x, y\rangle  > 0 \mbox{ for all }y\in
E\backslash\{0\}\}.
\]
For $x,y\in \R^p$ we write $x\preceq y$ if $y-x\in E$ and $x\prec
y$ if $y-x\in E^\circ$. An inner product on $\R^p$ can always be
written as $\langle x,y\rangle = x^\top M y$ for some positive
definite matrix $M$. By applying the linear transformation
$x\mapsto M^{1/2} x$ on the state space $E$, we may assume without
loss of generality that the underlying inner product is the usual
Euclidean inner product and we write $x^\top y$ instead of
$\langle x,y\rangle$.

Part of the following proposition extends
\cite[Proposition~3.4]{kell09} and \cite[Lemma~3.3]{cfmt09} from
the state spaces $\R^p_+$ and $S^p_+$ to general self-dual cones.
We adapt their proofs slightly by using the analyticity of
$t\mapsto \psi_i(t,u)$ in a neighborhood of $0$ for $u\in
-E^\circ$, which is a consequence of Theorem~\ref{th:charexplo}.
\begin{prop}\label{prop:selfdualcone}
Consider the situation of Theorem~\ref{th:charexplo}. Assume the
state space $E$ is a self-dual cone and in addition assume
$E^\circ\subset\{x\in\R^p:\Phi(x)>0\}$ and $\partial
E\subset\{x\in\R^p:\Phi(x)=0\}$, for some analytic function
$\Phi:\R^p\rightarrow\R$. Then
for $U=-E+\ii\R^p$ it holds that
\begin{enumerate}
\item $\psi_0(t,u)\leq\psi_0(t,v)$ and $\psi(t,u)\preceq \psi(t,v)$ for $u\preceq v$ with $u,v\in \Re U$, $t\geq 0$;
\item $\psi(t,u)\in \Re U^\circ$ for
all $u\in \Re U^\circ$, $t\geq 0$;
\item $t_\infty(u)=\infty$ for $u\in U^\circ$ and $\psi(t,u)\in U^\circ$ for
all $u\in U^\circ$, $t\geq 0$;
\end{enumerate}
\end{prop}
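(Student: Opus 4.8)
The plan is to treat the three assertions in increasing order of difficulty, after recording two preliminary facts. Since $E$ is self-dual, $U=-E+\ii\R^p$, so $\Re U=-E$, $\Re U^\circ=-E^\circ$ and $U^\circ=-E^\circ+\ii\R^p$; moreover $E+F\subset E$ together with $0\in E$ forces $F\subset E$, so for $y$ with $\Re y\in-E$ we have $\Re y^\top z\le 0$ on $F$, whence $R_i$ in (\ref{eq:Ri}) is well-defined on $U$ and, differentiating under the integral using (\ref{eq:z2Kfinite}), analytic on $U^\circ$. Next, for real $u\in-E$ we have $u^\top X_t\le 0$, so $\E_x\exp(u^\top X_t)\in(0,1]$ is finite and nonzero; by Theorem~\ref{th:charexplo} this gives $t_\infty(u)=\infty$ and $\E_x\exp(u^\top X_t)=\exp(\psi_0(t,u)+\psi(t,u)^\top x)$ for all $t\ge0$, $x\in E$. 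Replacing $x$ by $\lambda x$ and letting $\lambda\to\infty$ in $\psi_0(t,u)+\psi(t,u)^\top(\lambda x)\le 0$ shows $\psi(t,u)^\top x\le 0$ for all $x\in E$, i.e.\ the basic invariance $\psi(t,u)\in-E$.

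For assertion~1 I would use pathwise monotonicity: if $u\preceq v$ then $(v-u)^\top X_t\ge 0$ since $X_t\in E$ and $v-u\in E=E^\ast$, so $\exp(u^\top X_t)\le\exp(v^\top X_t)$, hence $\psi_0(t,u)+\psi(t,u)^\top x\le\psi_0(t,v)+\psi(t,v)^\top x$ for all $x\in E$. Putting $x=0$ gives $\psi_0(t,u)\le\psi_0(t,v)$, and the same cone-scaling argument applied to the difference gives $(\psi(t,v)-\psi(t,u))^\top x\ge0$ for all $x\in E$, i.e.\ $\psi(t,u)\preceq\psi(t,v)$.

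Assertion~2 is the crux. Fix real $u\in-E^\circ$; then $\psi(t,u)$ is real, lies in $-E$, and is analytic in $t$ as long as it stays in the open set $-E^\circ$ where $R$ is analytic. Set $g(t)=\Phi(-\psi(t,u))$, which is $\ge0$ on $-E$, analytic while $\psi(t,u)\in-E^\circ$, and satisfies $g(0)=\Phi(-u)>0$. Suppose, for contradiction, that $t_0:=\inf\{t:\psi(t,u)\in-\partial E\}<\infty$; then $g>0$ on $[0,t_0)$ and $g(t_0)=0$, so necessarily $\dot g(t_0^-)\le0$. I would derive a contradiction by showing the Riccati field points strictly inward at the boundary, namely $\nabla\Phi(-w)^\top R(w)<0$ for every $w\in-\partial E$, which via $\dot g=-\nabla\Phi(-\psi)^\top R(\psi)$ forces $\dot g(t_0^-)>0$. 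This strict inward-pointing estimate is the main obstacle: it is where self-duality, the admissibility of $(a^i,A^i,K^i)$ and the geometry of $\Phi$ must be combined, adapting \cite[Lemma~3.3]{cfmt09} and \cite[Proposition~3.4]{kell09}; analyticity of $g$ is precisely what upgrades mere non-exit to the strict statement by excluding tangential contact, since a zero of the analytic $g\ge0$ would also have $\dot g=0$ there.

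For assertion~3, take $u\in U^\circ$, so $\Re u\in-E^\circ$. Then $|\exp(u^\top X_t)|=\exp(\Re u^\top X_t)\le1$, and for $t<t_\infty(u)$, comparing $|\E_x\exp(u^\top X_t)|\le\E_x\exp(\Re u^\top X_t)=\exp(\psi_0(t,\Re u)+\psi(t,\Re u)^\top x)$ and taking logarithms, the same cone-scaling yields $\Re\psi(t,u)\preceq\psi(t,\Re u)$. Since $\psi(t,\Re u)\in-E^\circ$ by assertion~2 and $-E^\circ-E=-E^\circ$, this gives $\Re\psi(t,u)\in-E^\circ$, i.e.\ $\psi(t,u)\in U^\circ$ for all $t<t_\infty(u)$. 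Consequently $R$ is analytic along the trajectory, so $t\mapsto\psi(t,u)$ is analytic on $[0,t_\infty(u))$, while $u\mapsto\E_x\exp(u^\top X_t)$ is analytic on $U^\circ$ by Proposition~\ref{prop:filipmayer}(\ref{item:fm3}). To exclude $t_\infty(u)<\infty$ I would run the argument of Theorem~\ref{th:BsupsetU} (equivalently Step~4 of Proposition~\ref{prop:complexexp}): if $\E_x\exp(u^\top X_T)=0$ at $T=t_\infty(u)$, the Markov property gives $\E_x\exp(\psi(t,u)^\top X_{T-t})=0$ for $t<T$, and analyticity in $t$ of $z\mapsto\E_x\exp(\psi(z,u)^\top X_{T-\varepsilon})$, whose trajectory stays in $U^\circ$, propagates this zero back to $t=0$, contradicting $\E_x\exp(u^\top X_{T-\varepsilon})\neq0$. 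Hence $t_\infty(u)=\infty$, and the containment $\psi(t,u)\in U^\circ$ then holds for all $t\ge0$.
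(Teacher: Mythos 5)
Your treatments of assertions~1 and~3 are essentially the paper's own arguments (pathwise comparison plus cone--scaling for the first; the modulus bound $|\E_x\exp(u^\top X_t)|\le \E_x\exp(\Re u^\top X_t)$ combined with assertion~2 and the continuation argument of Theorem~\ref{th:BsupsetU} for the third), and they are fine. The genuine gap is in assertion~2, which you correctly identify as the crux but do not close. Your plan rests on the strict inward-pointing estimate $\nabla\Phi(-w)^\top R(w)<0$ for every $w\in-\partial E$, which is neither proved nor provable from the hypotheses: nothing in Theorem~\ref{th:charexplo} imposes a Feller-type drift condition on $(a^i,A^i,K^i)$, and the estimate already fails at the apex, where $w=0\in\partial(-E)$ gives $R_i(0)=0$ and hence $\nabla\Phi(0)^\top R(0)=0$ (for $E=\R_+$ with $a^0=a^1=0$ the flow $\dot\psi=\tfrac12 A^1\psi^2$ approaches the boundary tangentially). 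Your fallback --- that analyticity of $g(t)=\Phi(-\psi(t,u))$ ``excludes tangential contact'' --- does not work either: $g(t)=(t-t_0)^2$ is analytic, nonnegative, and vanishes at $t_0$ with $\dot g(t_0)=0$, so a first boundary contact with $\dot g=0$ produces no contradiction by itself. In short, the one-trajectory ODE argument cannot be completed.

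What the paper does instead is use analyticity in the \emph{initial condition} together with the order-preservation of assertion~1. If $\psi(t,u_0)$ first touches $\partial(-E)$ at time $t$, pick $x_0\in E\setminus\{0\}$ with $\psi(t,u_0)^\top x_0=0$; for every $v\succeq u_0$ the squeeze $\psi(t,u_0)^\top x_0\le\psi(t,v)^\top x_0\le 0$ forces $\psi(t,v)\in\partial(-E)$ as well, hence $\Phi(\psi(t,v))=0$ on the set $\{v\succeq u_0\}$, which is a set of uniqueness. Since $v\mapsto\psi(t,v)$ is analytic on $\Re U^\circ$ (via Proposition~\ref{prop:filipmayer}~(\ref{item:fm3})), $\Phi(\psi(t,\cdot))\equiv 0$ there; substituting $v=\psi(s,u_0)$ and using the flow property gives $\Phi(\psi(t+s,u_0))=0$ for all $s\ge0$, and a backward analytic continuation in $s$ near $0$ extends this to small $s<0$, contradicting $\Phi\neq 0$ on the interior where the trajectory lived before time $t$. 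This mechanism requires no quantitative boundary estimate at all, which is precisely why it succeeds where the inward-pointing approach cannot; you should replace your Step for assertion~2 by it.
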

\begin{remark}
Examples of such state spaces are $\R^p_+$, $\vech(S^p_+)$ (with
inner product $\langle x,y\rangle=\tr(
\vech^{-1}(x)\vech^{-1}(y))$) and the Lorentz cone $\{x\in\R^p:
x_1\geq (\sum_{i=2}^p x_i^2)^{1/2}\}$. The analytic function
$\Phi(x)$ can be chosen to be respectively $\Phi(x)=\prod_{i=1}^p
x_i$, $\Phi(x)=\det(\vech^{-1}(x))$ and
$\Phi(x)=x_1^2-\sum_{i=1}^p x_i^2$.
\end{remark}
\begin{proof}
If $u\preceq v$ with $u,v\in \Re U$, then $u^\top x \leq v^\top x$
for all $x\in E$. Hence
\[
\E_x \exp(u^\top X_t)\leq \E_x \exp(v^\top X_t),\mbox{ for all
$x\in E$, $t\geq0$}.
\]
Since the affine transform formula is valid for $u\in\Re U$ by
Theorem~\ref{th:charexplo}, it follows that
\[
\psi_0(t,u)+\psi(t,u)^\top x \leq \psi_0(t,v)+\psi(t,v)^\top
x,\mbox{ for all $x\in E$, $t\geq0$}.
\]
Taking $x=nx_0$ with $x_0\in E\backslash\{0\}$, $n\in\N$ and
letting $n$ tend to infinity, we obtain the first assertion.

We prove the second assertion from an argument by contradiction.
Suppose $u_0\in \Re U^\circ$ and suppose
\[
t:=\inf\{s>0:\psi(s,u_0)\not\in \Re U^\circ\}<\infty.
\]
Then $\psi(t,u_0)\in \partial E$, so $\psi(t,u_0)^\top x_0=0$ for
some $x_0\in E$. If $u_0\preceq v$, then $\psi(t,u_0)\preceq
\psi(t,v)$ by the first assertion and since $\psi(t,v)\in -E$ we
have $\psi(t,u_0)^\top x\leq\psi(t,v)^\top x\leq 0$ for all $x\in
E$. Hence $\psi(t,v)^\top x_0=0$ and $\psi(t,v)\in \partial E$.
Thus we have
\[
\Phi(\psi(t,v))=0,\mbox{ for all }v\succeq u_0.
\]
It holds that $\{v\in \Re U^\circ:v\succeq u_0\}$ is a set of
uniqueness. Moreover, $u\mapsto \E_x \exp(u^\top X_t)$ is analytic
on $U^\circ$ for all $x\in E$, by
Proposition~\ref{prop:filipmayer}~(\ref{item:fm3}). This implies
that $u\mapsto\psi(t,u)$ is analytic on $\Re U^\circ$. It follows
that
\[
\Phi(\psi(t,u))=0,\mbox{ for all }u\in \Re U.
\]
In particular (take $u=\psi(s,u_0)$) we have
\[
\Phi(\psi(t+s,u_0)=\Phi(\psi(t,\psi(s,u_0)))=0,\mbox{ for all
$s>0$}.
\]
Let $\varepsilon>0$ be such that $\psi(s,u)\in \Re U^\circ$ for
$-\varepsilon<s<\varepsilon$. Then $s\mapsto \psi(s,u)$ is
analytic on $(-\varepsilon,\varepsilon)$ in view of
(\ref{eq:riccati}) and the analyticity of (\ref{eq:Ri}). Hence
$s\mapsto \Phi(\psi(t+s,u))$ is analytic on
$(-\varepsilon,\varepsilon)$ and it follows that it is zero on
this interval, as it is zero on $[0,\varepsilon)$. This
contradicts $\psi(s,u_0)\in\Re U^\circ$ for $s<t$.

For the third assertion, let $u\in U^\circ$. Then
\begin{align*}
\exp(\Re\psi_0(t,u)+\Re\psi(t,u)^\top  x)&=|\E_x(\exp(u^\top
X_t))|
\\&\leq \E_x(\exp(\Re u^\top X_t)\\&=\exp(\psi_0(t,\Re u)+\psi(t,\Re
u)^\top x),
\end{align*}
for all $x\in E$, $t<t_\infty(u)$. Take $x_0\in E\backslash\{0\}$
and $x= n x_0$ for $n\in \N$ and let $n$ tend to infinity. Then
the right-hand side of the above display tends to zero, which
implies $\Re\psi(t,u)^\top  x<0$ for all $x\in E\backslash\{0\}$,
i.e.\ $\psi(t,u)\in U^\circ$. The proof of $t_\infty(u)=\infty$
goes along the same lines as the proof of
Theorem~\ref{th:BsupsetU}.
\end{proof}

\begin{cor}
Consider the situation of Proposition~\ref{prop:selfdualcone}.
Write $K$ for the vector of signed measures $K^i$, $i=1,\ldots,p$,
let $L_u=\{z\in E:  u^\top z\neq 2k\pi,\mbox{ for all }k\in\Z\}$
and assume $K(L_u)\succ 0$ for all $u\in\R^p$. Then $t_\infty(\ii
u)=\infty$ for all $u\in\R^p$, whence $X$ is a regular affine
process in the sense of Definition~\ref{def:affine}.
\end{cor}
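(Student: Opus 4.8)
The plan is to show directly that $t_\infty(\ii u)=\infty$ for every $u\in\R^p$; once this holds for all $u$, Theorem~\ref{th:charexplo} gives that (\ref{eq:char}) is valid for all $t\ge0$, $u\in\ii\R^p$ with $(\psi_0,\psi)$ solving the Riccati equations and continuously differentiable in $t$ (Lemma~\ref{lem:continx}), which is exactly the definition of a regular affine process. The whole difficulty is thus to prevent the characteristic function from vanishing. The strategy is \emph{not} to bound $\psi(\cdot,\ii u)$ globally, but merely to push the argument into the interior $U^\circ=-E^\circ+\ii\R^p$ after an arbitrarily short time, and then invoke Proposition~\ref{prop:selfdualcone}(3), which already handles everything on $U^\circ$. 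Concretely, I would show that $\psi(t_0,\ii u)\in U^\circ$ for some small $t_0\in(0,t_\infty(\ii u))$ (recall $t_\infty(\ii u)>0$ by Theorem~\ref{th:charexplo}); then Proposition~\ref{prop:selfdualcone}(3) yields $t_\infty(\psi(t_0,\ii u))=\infty$, and the semigroup property of the flow of the Riccati equation, $\psi(t_0+s,\ii u)=\psi(s,\psi(t_0,\ii u))$, gives $t_\infty(\ii u)=t_0+t_\infty(\psi(t_0,\ii u))=\infty$.

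The core of the proof is therefore the infinitesimal statement that $\Re\psi(t,\ii u)$ enters $-E^\circ$ immediately. Since $\psi(0,\ii u)=\ii u$ we have $\Re\psi(0,\ii u)=0\in\partial(-E)$, and from $|\E_x\exp(\ii u^\top X_t)|\le1$ together with self-duality one checks $\Re\psi(t,\ii u)\in-E$ for all $t$. To obtain strict interiority I would compute, for fixed $y\in E\setminus\{0\}$, the right derivative at $t=0$ of $t\mapsto\Re\psi(t,\ii u)^\top y$. Using $\dot\psi=R(\psi)$ with $R_i$ from (\ref{eq:Ri}), this equals
\[
\sum_{i=1}^p y_i\,\Re R_i(\ii u)=-\half u^\top\Big(\sum_{i=1}^p y_iA^i\Big)u+\int\big(\cos(u^\top z)-1\big)\sum_{i=1}^p y_iK^i(\dd z),
\]
all terms being finite because $|e^{\ii u^\top z}-1-\ii u^\top z|\le\half|u|^2|z|^2$ and (\ref{eq:z2Kfinite}) holds. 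The key is that this expression is \emph{strictly} negative for every $y\in E\setminus\{0\}$: by admissibility together with the scaling $x\mapsto nx$ (letting $n\to\infty$ in $c(nx)\succeq0$ and $K(nx,\cdot)\ge0$) the homogeneous parts satisfy $\sum_{i=1}^p y_iA^i\succeq0$ and $\sum_{i=1}^p y_iK^i\ge0$ for $y\in E$, so the quadratic term is $\le0$; and since $\sum_{i=1}^p y_iK^i(L_u)=y^\top K(L_u)>0$ by the hypothesis $K(L_u)\succ0$, while $\cos(u^\top z)-1<0$ precisely on $L_u$, the integral is bounded above by $\int_{L_u}(\cos(u^\top z)-1)\sum_i y_iK^i(\dd z)<0$.

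Finally, I would upgrade this pointwise-in-$y$ statement to a single $t_0$. Because $y\mapsto\sum_{i=1}^p y_i\,\Re R_i(\ii u)$ is continuous and strictly negative on the compact set $E\cap S^{p-1}$, it is $\le-\delta<0$ there, and by continuity of the derivative near $t=0$ there is $t_0>0$ (with $t_0<t_\infty(\ii u)$) such that $\Re\psi(t_0,\ii u)^\top y<0$ for all $y\in E\cap S^{p-1}$, i.e.\ $\Re\psi(t_0,\ii u)\in-E^\circ$ and hence $\psi(t_0,\ii u)\in U^\circ$. The flow argument above then closes the proof, and the final corollary statement about regularity follows as noted. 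I expect the main obstacle to be exactly this strict-interiority step: it is where the ``lattice-breaking'' role of $K(L_u)\succ0$ is indispensable (without it the characteristic function genuinely can vanish), and it requires the slightly delicate extraction of the positivity $\sum_i y_iA^i\succeq0$ and non-negativity $\sum_i y_iK^i\ge0$ of the homogeneous coefficients from the admissibility of $X$ via the cone-scaling; everything else reduces to the already available Proposition~\ref{prop:selfdualcone}(3) and standard ODE flow properties.
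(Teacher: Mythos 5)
Your proposal is correct and follows essentially the same route as the paper: the paper likewise reduces to showing $\Re\dot{\psi}(0,\ii u)\prec 0$, extracts $c(x)-A^0\succeq 0$ and $K(\dd z)^\top x\geq 0$ from the cone scaling $x\mapsto nx$, uses $K(L_u)\succ 0$ with $\cos(u^\top z)-1<0$ on $L_u$ for the strict inequality, and then concludes that $\psi(t,\ii u)\in U^\circ$ for small $t>0$ so that Proposition~\ref{prop:selfdualcone} applies. Your explicit compactness argument on $E\cap S^{p-1}$ and the flow identity $t_\infty(\ii u)=t_0+t_\infty(\psi(t_0,\ii u))$ are just slightly more detailed renderings of the paper's ``by continuity'' step.
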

\begin{proof}
For $u=0$ there is nothing to prove. Let $u\in\R^p\backslash\{0\}$
be arbitrary. It suffices to prove $\dot{\psi}(0,\ii u)\in
U^\circ$. Indeed, by continuity we then have $\dot{\psi}(t,\ii
u)\in U^\circ$ for $t>0$ small enough. Hence $\psi(t,\ii u)=\ii
u+\int_0^t \dot{\psi}(s,\ii u)\dd s \in U^\circ$ for $t>0$ small
enough. The result then follows from
Proposition~\ref{prop:selfdualcone}.

We first show that $c(x)-A^0$ is positive semi-definite and $K(\dd
z) ^\top x$ is a positive measure, for all $x\in E$. Since $E$ is
a cone we have $nx\in E$, for all $n\in \N$, $x\in E$. We can
write
\[
c(nx)=A^0+n(c(x)-A^0),\quad K(nx,\dd z)=K^0(\dd z)+n K(\dd z)^\top
x.
\]
Since $c(x)$ is positive semi-definite and $K(x,\dd z)$ is a
positive measure for all $x\in E$, we have the same properties for
$c(x)-A^0$ respectively $K(\dd z)^\top x$, in view of the above
display.

Next we note that $\int (\cos(u^\top z)-1)K(\dd z)\prec 0$.
Indeed, by the assumption $K(L_u)\succ 0$ and the fact that
$f(z):=\cos( u^\top z)-1<0$ for $z\in L_u$ we have
\[
\int f(z) K(\dd z)^\top x = \int_{E\backslash L_u} f(z) K(\dd
z)^\top x + \int_{L_u} f(z) K(\dd z)^\top x <0,
\]
for $x\in E\backslash\{0\}$.

Now let $x\in E\backslash\{0\}$ be arbitrary. Then the previous
together with (\ref{eq:Ri}) yields
\[
\Re\dot{\psi}(0,\ii u)^\top x= -\half u^\top (c(x)-A^0) u + \int
(\cos(u^\top z)-1)K(\dd z)^\top x<0,
\]
whence $\Re\dot{\psi}(0,\ii u)^\top\prec 0$, as we needed to show.
\end{proof}

\appendix

\section{}

\begin{proof}[Proof of Remark~\ref{remark} part 2]
Let $f_k$ be a sequence in $C^\infty_c$ with $0\leq f_k\leq 1$ and
$f_k=1$ on the ball with center 0 and radius $k$. We define
\begin{align*}
\mathcal{C}=\{f\in C_c^\infty(E):f(x)&=\cos(u^\top x)f_k(x)\mbox{
or }\\f(x)&=\sin(u^\top x)f_k(x),\mbox{ for some $u\in\Q$,
$k\in\N$}\}.
\end{align*}
Then $P$ is a solution of the martingale problem for $\mathcal{A}$
on $\Omega$ if and only if
\[
f(X_t)-f(X_0)-\int_0^t \mathcal{A}f(X_s)\dd s
\]
is an $((\mathcal{F}^X_t),\PP)$-martingale for all
$f\in\mathcal{C}$. Indeed, suppose the latter holds, then
following the proof of \cite[Proposition~3.2]{cherfilyor} we
deduce that
\begin{equation}\label{eq:ito}
\begin{split}
f(X_t)-f(X_0)-&\int_0^t\nabla f(X_s) b(X_s)+\half\tr(\nabla^2
f(X_s)c(X_s))\\&+\int (f(X_s+z)-f(X_s)-\nabla f(X_s) z)K(X_s,\dd
z)\dd s
\end{split}
\end{equation}
is an $((\mathcal{F}^X_{t+}),\PP)$-local martingale for
$f(x)=e^{\ii u^\top x}$, for all $u\in\Q^p$, whence for all
$u\in\R^p$ by dominated convergence.
\cite[Theorem~II.2.42]{JacShir} yields that
$\PP$ is a solution of the martingale problem for $\mathcal{A}$ on
$\Omega$.

Applying \cite[Theorem~4.4.6]{ethier86} to the operator
$\left.\mathcal{A}\right|_\mathcal{C}$ gives that $x\mapsto
\PP_x(B)$ is measurable for all Borel sets $B$, i.e.\
$(\PP_x)_{x\in E}$ is a transition kernel. We note that although
we don't have well-posedness for all initial values in the sense
of \cite[Theorem~4.4.6]{ethier86}, the assertion in that theorem
still holds under the weaker assumption of well-posedness for
degenerate initial distributions. This is a consequence of the
fact that the set $\{P\in\mathcal{P}(E):P\mbox{ is degenerate}\}$
is measurable with respect to the Borel $\sigma$-algebra induced
by the Prohorov metric (in fact, it is even a closed set).

Following the last part of the proof of \cite[Theorem~21.10]
{Kallenberg} we see that $\PP_\lambda:=\int\PP_x\lambda(\dd x)$ is
the unique solution for $(\mathcal{A},\lambda)$. The strong Markov
property is a consequence of \cite[Theorem~4.4.2(c)]{ethier86}.
\end{proof}

\begin{lemma}\label{lem:appendix}
Let $\mathcal{A}$ and $\widetilde{\mathcal{A}}$ be given by
(\ref{eq:opA}) and (\ref{eq:opwA}) and assume (\ref{eq:condbcK}),
(\ref{eq:condgrowK}), (\ref{eq:condcontinu}) and
(\ref{eq:condmoregrowK}). Then for all $f\in C_c^\infty(E)$ it
holds that $\mathcal{A}f\in B(E)$ and $\widetilde{\mathcal{A}}f\in
C_0(E)$.
\end{lemma}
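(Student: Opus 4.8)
The plan is to handle the two assertions separately and to isolate in each the only delicate term, the integral part of the operators; the computation also justifies the ranges $B(E)$ and $C_0(E)$ claimed just after (\ref{eq:opA}) and (\ref{eq:opwA}). Write $K_0:=\mathrm{supp}\,f$, a compact subset of $E$, and $R_0:=\sup_{y\in K_0}|y|$. Since $\nabla f$ and $\nabla^2 f$ are continuous and vanish off $K_0$, the drift and second-order terms $\nabla f(x)b(x)+\half\tr(\nabla^2 f(x)c(x))$ and $\nabla f(x)\wb(x)+\half\tr(\nabla^2 f(x)c(x))$ are supported in $K_0$; there they are bounded by (\ref{eq:condbcK}) and, in the tilde case, continuous by (\ref{eq:condcontinu}), while they vanish identically for large $|x|$. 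Hence both claims reduce to the behaviour of $x\mapsto\int g_x(z)K(x,\dd z)$, respectively with $\wK$, where $g_x(z):=f(x+z)-f(x)-\nabla f(x)z$.

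For $\mathcal{A}f\in B(E)$ I would split $E$ at $K_1:=\{x\in E:\mathrm{dist}(x,K_0)\le1\}$, which is compact. On $K_1$ a second-order Taylor estimate gives $|g_x(z)|\le\half\|\nabla^2 f\|_\infty|z|^2$ for $|z|\le1$ and $|g_x(z)|\le 2\|f\|_\infty+\|\nabla f\|_\infty|z|$ for $|z|>1$, so the integral is dominated by a multiple of $\int(|z|^2\wedge|z|)K(x,\dd z)$, which is bounded on the compact $K_1$ by (\ref{eq:condbcK}). On $E\setminus K_1$ one has $f(x)=0$ and $\nabla f(x)=0$, so $g_x(z)=f(x+z)$, supported on $K_0-x\subset\{|z|\ge d(x)\}$ with $d(x):=\mathrm{dist}(x,K_0)>1$; thus $|\int g_x(z)K(x,\dd z)|\le\|f\|_\infty K(x,\{|z|\ge d(x)\})$. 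A Chebyshev estimate against (\ref{eq:condgrowK}) yields $K(x,\{|z|\ge d(x)\})\le C(1+|x|^q)/d(x)^q$, and since $d(x)\ge|x|-R_0$ this stays bounded as $|x|\to\infty$. Measurability of the integral term is the standard fact that integrating the jointly continuous integrand $g_x(z)$ against the transition kernel $K$ produces a measurable function of $x$.

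For $\widetilde{\mathcal{A}}f\in C_0(E)$ the continuity is the main obstacle, because both the integrand and the measure vary with $x$. I would rewrite the jump term as $\int\phi_x(z)\,\dd\mu_x(z)$ with $\phi_x(z):=g_x(z)/(|z|^2\wedge|z|)$ and $\mu_x(\dd z):=(|z|^2\wedge|z|)\wK(x,\dd z)$; the estimates above show $\phi_x\in C_b(F)$ with $\sup_x\|\phi_x\|_\infty<\infty$. The key step is to prove $\phi_{x_n}\to\phi_x$ \emph{uniformly in $z$} when $x_n\to x$. Using the integral form of the Taylor remainder, $g_{x_n}(z)-g_x(z)=\int_0^1(1-s)\,z^\top[\nabla^2 f(x_n+sz)-\nabla^2 f(x+sz)]z\,\dd s$, so on $\{|z|\le1\}$ one gets $|\phi_{x_n}(z)-\phi_x(z)|\le\half\,\omega(|x_n-x|)$ with $\omega$ the modulus of continuity of $\nabla^2 f$, whereas on $\{|z|>1\}$ the Lipschitz bounds for $f$ and $\nabla f$ give $|\phi_{x_n}(z)-\phi_x(z)|\le 2\|\nabla f\|_\infty|x_n-x|+|\nabla f(x_n)-\nabla f(x)|$; both bounds tend to $0$. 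Taking $\phi\equiv1$ in (\ref{eq:condcontinu}) shows $x\mapsto\mu_x(F)$ is continuous, so $\sup_n\mu_{x_n}(F)<\infty$; splitting $\int\phi_{x_n}\dd\mu_{x_n}-\int\phi_x\dd\mu_x=\int(\phi_{x_n}-\phi_x)\dd\mu_{x_n}+\int\phi_x(\dd\mu_{x_n}-\dd\mu_x)$ then controls the first summand by the uniform convergence and the bounded masses, and the second by the weak continuity in (\ref{eq:condcontinu}).

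Finally, vanishing at infinity follows by repeating the far-field analysis: for large $|x|$, $g_x(z)=f(x+z)$ is supported on $K_0-x\subset\{|z|\ge d(x)\}$, so $|\int g_x(z)\wK(x,\dd z)|\le\|f\|_\infty\wK(x,\{|z|\ge d(x)\})$, and now a Chebyshev estimate against the \emph{stronger} moment bound (\ref{eq:condmoregrowK}) gives $\wK(x,\{|z|\ge d(x)\})\le C(1+|x|^q)/(d(x)^q\log d(x))$. Since $d(x)\ge|x|-R_0\to\infty$, the extra factor $1/\log d(x)\to0$ forces this to vanish; this is precisely the role of the additional weight $\log|z|$ in (\ref{eq:condmoregrowK}), which is not needed for the mere boundedness of $\mathcal{A}f$. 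Together with the continuity just established, this gives $\widetilde{\mathcal{A}}f\in C_0(E)$.
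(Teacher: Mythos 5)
Your proof is correct and follows essentially the same route as the paper's: a Chebyshev-type estimate against the $q$-th moment bounds (\ref{eq:condgrowK}) and (\ref{eq:condmoregrowK}) for the behaviour at infinity, and for continuity the same two-term split (variation of the integrand controlled by Taylor estimates with the higher derivatives of $f$, variation of the measure controlled by the weak continuity hypothesis), merely rewritten in terms of the normalized measure $(|z|^2\wedge|z|)\wK(x,\dd z)$. You in fact supply a detail the paper dismisses with ``likewise one can show'', namely that the extra factor $\log|z|$ in (\ref{eq:condmoregrowK}) is exactly what forces $\widetilde{\mathcal{A}}f$ to vanish, rather than merely remain bounded, as $|x|\to\infty$.
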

\begin{proof}
Take $f\in C^\infty_c(E)$ with $f(x)=0$ for $|x|> M$, some $M>0$.
Then for $|x|>M+1$ it holds that
\begin{align*}
|\mathcal{A}f(x)|&=|\int  f(x+z)K(x,\dd z)|\\&\leq
\|f\|_\infty\int_{\{|z|\geq|x|-M\}}|z|^q/(|x|-M)^q K(x,\dd z)\\
&\leq \|f\|_\infty C(1+|x|)^q/(|x|-M)^q,
\end{align*}
which is bounded for $x\geq M+1$. Hence $\mathcal{A}f\in B(E)$ and
likewise one can show that $\widetilde{\mathcal{A}}f(x)\rightarrow
0$ if $|x|\rightarrow\infty$. It remains to show that
$\widetilde{\mathcal{A}}f$ is continuous.

Write $g(x)=f(x+z)-f(x)-\nabla f(x) z$ and
\begin{align*}
\int g(x)\wK(x,\dd z)-\int g(y)\wK(y,\dd z)&= \int
(g(x)-g(y))\wK(x,\dd z)\\&+\int g(y)(\wK(x,\dd z)-\wK(y,\dd z)).
\end{align*}
The integrand of the first term on the right-hand side equals
(where $f_{ijk}$ is short-hand notation for
$\partial_i\partial_j\partial_k f$)
\begin{align*}
&\sum_{i,j,k}\int_0^1\!\!\int_0^1\!\!\int_0^1
 f_{ijk}((1-u)y+u x + st z)(x_i-y_i)s t z_j z_k   \,\dd u \,\dd s \,\dd t
\,1_{\{|z|\leq 1\}}\\
&+\sum_{i,j}\int_0^1\!\!\int_0^1 ( f_{ij}((1-t)y+t x + s
z)\\&\qquad\qquad\qquad- f_{ij}((1-t)y+t x ))(x_i-y_i)s t z_j
\,\dd s \,\dd t \,1_{\{|z|> 1\}},
\end{align*}
whence its integral tends to zero for $x\rightarrow y$ since $\int
(|z|^2\wedge|z|)\wK(\cdot,\dd z)$ is bounded on compacta. The
integrand in the second term on the right-hand side can be bounded
by a constant times $|z|^2\wedge|z|$, whence the integral tends to
zero by weak continuity of $x\mapsto(|z|^2\wedge|z|)\wK(x,\dd z)$.
It now easily follows that $\widetilde{\mathcal{A}}f$ is
continuous.
\end{proof}

\begin{lemma}\label{lem:sup}
Let $\Omega=D_E[0,\infty)$ and suppose $X$ is a special
jump-diffusion on
$(\Omega,\mathcal{F}^X,(\mathcal{F}^X_{t+}),\PP)$ with
decomposition (\ref{eq:decompx}) and differential characteristics
$(b(X) 1_{[0,\tau]},c(X)1_{[0,\tau]},K(X,\dd z)1_{[0,\tau]})$ for
some $(\mathcal{F}^X_{t+})$-stopping time $\tau$. Assume $\E
|X_0|^2<\infty$ and
\begin{align}\label{eq:lingrow}
|b(x)|^2+|c(x)|+ \int  |z|^2 K(x,\dd z)\leq C(1+|x|^2),\mbox{ for
some $C>0$, all $x\in E$}.
\end{align}
Then for all $T\geq 0$ it holds that
\[
\E \sup_{t\leq T} |X_t|^2\leq (4\E |X_0|^2+C(T)) e^{C(T)T},
\]
with $C(T)$ a constant depending on $C$ and $T$. In addition,
$X^c$ and $z\ast(\mu^X-\nu^X)$ are proper martingales.
\end{lemma}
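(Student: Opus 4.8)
The plan is to bound $\E\sup_{t\le T}|X_t|^2$ by exploiting the decomposition (\ref{eq:decompx}) together with the linear growth condition (\ref{eq:lingrow}), and to close the estimate with Gronwall's inequality. Writing $X_t=X_0+B_t+X^c_t+(z\ast(\mu^X-\nu^X))_t$ and using the elementary inequality $|a+b+c+d|^2\le 4(|a|^2+|b|^2+|c|^2+|d|^2)$, I would split $\E\sup_{t\le T}|X_t|^2$ into four contributions and estimate each one separately, the factor $4$ being precisely the one appearing in the asserted bound.

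For the drift term, Jensen's (or Cauchy--Schwarz) inequality gives $\sup_{t\le T}|B_t|^2\le T\int_0^T|b(X_s)|^2\dd s$, whose expectation is controlled by $T\int_0^T C(1+\E|X_s|^2)\dd s$ via (\ref{eq:lingrow}). For the continuous local martingale part, Doob's $L^2$-maximal inequality (applied coordinatewise) yields $\E\sup_{t\le T}|X^c_t|^2\le 4\,\E\langle X^c\rangle_T=4\,\E\int_0^T\tr c(X_s)\dd s$, and for the purely discontinuous martingale $z\ast(\mu^X-\nu^X)$ the same inequality together with its predictable quadratic variation $\int_0^t\int|z|^2K(X_s,\dd z)\dd s$ gives an analogous bound; both are again controlled using (\ref{eq:lingrow}). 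Collecting the four terms and estimating $\E|X_s|^2\le g(s):=\E\sup_{u\le s}|X_u|^2$, I obtain $g(T)\le 4\E|X_0|^2+C(T)\int_0^T(1+g(s))\dd s$ for a suitable constant $C(T)$ depending on $C$ and $T$, and Gronwall's inequality then delivers the stated bound.

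The hard part will be making these estimates rigorous, since a priori $X^c$ and $z\ast(\mu^X-\nu^X)$ are only \emph{local} martingales, so neither Doob's inequality nor the finiteness of the expectations may be invoked directly. I would handle this by localizing: introduce stopping times $\tau_n=\inf\{t\ge 0:|X_t|\ge n\text{ or }|X_{t-}|\ge n\}$ and work with the stopped process $X^{\tau_n}$, whose characteristics are still of the required form but now bounded, since $X$ stays in $\{|x|\le n\}$ on $[0,\tau_n)$. The growth bound (\ref{eq:lingrow}) then shows $\E\int_0^{T\wedge\tau_n}\int|z|^2K(X_s,\dd z)\dd s\le C(1+n^2)T<\infty$, so the stopped martingale parts are genuine $L^2$-martingales and $\E\sup_{t\le T}|X^{\tau_n}_t|^2<\infty$; this is exactly what is needed to run the above estimates and apply Gronwall with a constant $C(T)$ \emph{independent of $n$}. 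Letting $n\to\infty$ and invoking the Monotone Convergence Theorem then yields the claimed bound for $X$ itself, and in particular $\E\sup_{t\le T}|X_t|^2<\infty$.

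Finally, this finiteness combined once more with (\ref{eq:lingrow}) shows $\E\langle X^c\rangle_T<\infty$ and $\E\int_0^T\int|z|^2K(X_s,\dd z)\dd s<\infty$. Since a local martingale whose predictable quadratic variation has finite expectation on $[0,T]$ is a true square-integrable martingale there, it follows that both $X^c$ and $z\ast(\mu^X-\nu^X)$ are proper martingales, which is the last assertion.
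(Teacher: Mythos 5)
Your proposal is correct and follows essentially the same route as the paper's proof: the factor-$4$ splitting of the decomposition, Cauchy--Schwarz for the drift, Doob's $L^2$-inequality for the two martingale parts, localization at $T_n=\inf\{t:|X_t|\ge n\text{ or }|X_{t-}|\ge n\}$ to justify the estimates and secure the a priori finiteness needed for Gronwall, monotone convergence in $n$, and the observation that integrable predictable quadratic variation upgrades the local martingales to true martingales. The only cosmetic difference is that the paper establishes $\E\sup_{t\le T}|X^{T_n}_t|^2<\infty$ by explicitly bounding the overshoot $\E|\Delta X_{T\wedge T_n}|^2$ via the compensator, whereas you obtain it from the $L^2$-martingale property of the stopped parts; both are valid.
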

\begin{proof}
Define stopping times $T_n=\inf\{t\geq0:|X_t|\geq n\mbox{ or
}|X_{t-}|\geq n\}$. It holds that
\[
\langle X^c\rangle_{t}^{T_n}=\int_0^{t\wedge T_n}
c(X_s)1_{[0,\tau]}(s)\dd s
\]
and
\[
\langle z\ast(\mu^X-\nu^X )\rangle^{T_n}_t=\int_0^{t\wedge
T_n}\int |z|^2 K(X_s,\dd z)1_{[0,\tau]}(s)\dd s
\]
have finite expectation, as they are bounded. This yields that
both $(X^c)^{T_n}$ and $z \ast(\mu^X-\nu^X)^{T_n}$ are
martingales, by \cite[Proposition~I.4.50]{JacShir}. For $t\geq 0$
write $\|X\|_t=\sup_{s\leq t}|X_s|$ and let $T>0$ be fixed. Then
it holds that
\begin{align*}
\quart\|X^{\tau_n}\|_T^2&\leq |X_0|^2+\sup_{t\leq
T}\left|\int_0^{t} b(X_s)1_{[0,\tau\wedge T_n]}(s)\dd s\right|^2
+\sup_{t\leq T}|X^c_{t\wedge T_n}|^2\\&+\sup_{t\leq T}\left|z\,
1_{[0,T_n]}\ast(\mu^X-\nu^X )_t\right|^2.
\end{align*}
Cauchy-Schwarz gives
\begin{align*}
\sup_{t\leq T}\left|\int_0^{t} b(X_s)1_{[0,\tau\wedge T_n]}(s)\dd
s\right|^2&\leq T\int_0^{T} |b(X_s)|^21_{[0,\tau\wedge T_n]}(s)\dd
s\\&\leq CT\int_0^{T} (1+\|X^{T_n}\|_s^2)\dd s.
\end{align*}
Doob's inequality gives
\begin{align*}
\E \sup_{t\leq T}|X^c_{t\wedge T_n}|^2&\leq 4 \E (X^c_{T\wedge
T_n})^2\leq 4\E \int_0^{T} c(X_{s})1_{[0,\tau\wedge T_n]}\dd s
\\&\leq 4C\int_0^T(1+\E\|X^{T_n}\|_s^2 )\dd s
\end{align*}
and
\begin{align*}
\E \sup_{t\leq T}\left|z\, 1_{[0,T_n]}\ast(\mu^X-\nu^X
)_t\right|^2&\leq
4\E\int_0^{T\wedge\tau\wedge T_n}\int  |z|^2\nu^X (\dd s,\dd z)\\
&\leq 4C\int_0^T(1+\E\|X^{T_n}\|_s^2 )\dd s,
\end{align*}
It follows that for $t\leq T$ we have
\[
\E\|X^{T_n}\|_t^2\leq 4|X_0|^2+C'(T)(1+\int_0^t
\E\|X^{T_n}\|_s^2\dd s),
\]
with $C'(T)$ a constant depending on $C$ and $T$. Since
\begin{align*}
\E\|X^{T_n}\|_T^2 &\leq \E|X_0|^2 + n^2+\E|\Delta X_{T\wedge T_n}|^2\\
&\leq \E|X_0|^2 +n^2+\E\int_0^{T\wedge{T_n}}\int |z|^2\mu^{X}(\dd
t,\dd z)\\
&=\E|X_0|^2 +n^2+\E\int_0^{T\wedge{T_n}}\int |z|^2\nu^X(\dd
t,\dd z)\\
&\leq \E|X_0|^2 +n^2+C\E\int_0^{T\wedge T_n}(1+|X_{s}|^2)\dd
s<\infty,
\end{align*}
Grownwall's lemma yields
\[
\E\|X^{T_n}\|_T^2\leq (4\E|X_0|^2+C(T)) e^{C(T)T}.
\]
for some constant $C(T)$ depending on $C$ and $T$. Let
$n\rightarrow\infty$, then the left-hand side converges by the
Monotone Convergence Theorem to $\E\|X\|_T^2$, which is bounded by
the right-hand side. This yields the first assertion of the lemma.
The second assertion is an immediate consequence in view of
\cite[Proposition~I.4.50]{JacShir}, since
\[
\langle X^c\rangle_{t}=\int_0^{t\wedge\tau} c(X_s)\dd s\,\mbox{
and }\, \langle z\ast(\mu^X-\nu^X
)\rangle_t=\int_0^{t\wedge\tau}\int |z|^2 K(X_s,\dd z)\dd s
\]
have finite expectation due to the growth-condition
(\ref{eq:lingrow}) and the derived moment inequality for
$|X_t|^2$.
\end{proof}

\bibliographystyle{imsart-nameyear}
\bibliography{refs}

\end{document}